\providecommand{\keywords}[1]{\textbf{\textit{Keywords:}} #1}
\newtheorem{theorem}{Theorem}
\newtheorem{lemma}{Lemma}
\newtheorem{corollary}{Corollary}
\definecolor{ao}{rgb}{0.0, 0.5, 0.0}
\definecolor{darkpastelgreen}{rgb}{0.01, 0.75, 0.24}
\definecolor{auburn}{rgb}{0.43, 0.21, 0.1}
\definecolor{armygreen}{rgb}{0.29, 0.33, 0.13}
 \newtheorem{thm}{Theorem}[section]
 \newtheorem{lem}[thm]{Lemma}
 \theoremstyle{definition}
 \newtheorem{defn}[thm]{Definition}
 \theoremstyle{remark}
 \newtheorem{rem}[thm]{Remark}
 \numberwithin{equation}{section}
\begin{document}

\title{Hardy inequality and fractional Leibnitz rule for perturbed Hamiltonians on the line }

\author{Vladimir Georgiev,
\\ Department of Mathematics, University of Pisa, Largo B. Pontecorvo 5, \\  Pisa,
56127 Italy, \\
 georgiev@dm.unipi.it \\
\and \\
 Anna Rita Giammetta, \\ Department of Mathematics, University of Pisa, Largo B. Pontecorvo 5, \\ Pisa,
56127 Italy, \\
giammetta@mail.dm.unipi.it}



\maketitle

\selectlanguage{english}
\begin{abstract}
We consider the following perturbed Hamiltonian $\mathcal{H}= -\partial_x^2 + V(x)$ on the real line. The potential $V(x),$ satisfies a short range assumption of type  $$(1+|x|)^\gamma V(x) \in L^1(\mathds{R}), \   \gamma > 1.$$  We study the equivalence of classical
homogeneous  Sobolev type spaces  $\dot{H}^s_p(\mathds{R})$, $p \in (1,\infty)$ and the corresponding perturbed homogeneous Sobolev spaces associated with the perturbed Hamiltonian. It is shown that the assumption zero is not a resonance   guarantees that the perturbed and unperturbed homogeneous Sobolev norms of order $s = \gamma - 1 \in [0,1/p)$ are equivalent. As a corollary,  the corresponding wave operators leave classical homogeneous Sobolev spaces of order $s \in [0,1/p)$ invariant.
\end{abstract}

\selectlanguage{english}

\keywords{Homogeneous Sobolev norms, Paley Littlewood decomposition, Elliptic estimates, Laplace operator with potential, Equivalent Sobolev norms.}

\section{ Introduction and motivation}

The uncertainty principle in quantum mechanics  is frequently associated with Hardy type inequality
\begin{equation}\label{eq.II1}
   \| \,  |x|^{-s} f \|_{L^p(\mathds{R}^n)} \leq C \| \mathcal{H}_0^{s/2}f \|_{L^p(\mathds{R}^n)}, \ \ s \in [0, n/p),
\end{equation}
where $\mathcal{H}_0 = -\Delta$ is the free Hamiltonian in $\mathds{R}^n, n \geq 1.$
The presence of a perturbed Hamiltonian  $\mathcal{H}= \mathcal{H}_0 + V(x)$ with a short range real-valued  potential $V(x)$ leads to the natural question to verify if Hardy type inequality is true for this perturbed Hamiltonian. The appearance of eigenvectors of $\mathcal{H}$ is an obstacle to have Hardy type inequality or to establish existence and completeness of the wave operators in the whole $L^p(\mathds{R}^n)$ space, so   it is natural to look for estimate of type
\begin{equation}\label{eq.II2}
   \| \,  |x|^{-s} f \|_{L^p(\mathds{R}^n)} \leq C \| \mathcal{H}_{ac}^{s/2}f \|_{L^p(\mathds{R}^n)}, \ \ s \in [0, n/p),
\end{equation}
where $\mathcal{H}_{ac}$ is the absolutely continuous part of the perturbed Hamiltonian and $f$ is in the domain of $\mathcal{H}_{ac}$.

Our key goal in this work is to study the equivalence of the fractional energy norms 
\begin{equation}\label{eq.II3}
  \| \mathcal{H}_{ac}^{s/2}f \|_{L^p(\mathds{R})} \sim \| \mathcal{H}_{0}^{s/2}f \|_{L^p(\mathds{R})},
\end{equation}
since this equivalence property  shows that \eqref{eq.II1} implies \eqref{eq.II2}.

Another motivation to study the equivalence property  \eqref{eq.II3} is connected with the necessity to generalize so called fractional Leibnitz rule, used as a basic tool in rigorous analysis of local well-posedness of nonlinear dispersive equations, to the case of fractional Hamiltonians of type $\mathcal{H}_{ac}^{s/2}$.
To be more precise, the following estimate is  known as fractional Leibnitz rule or Kato-Ponce estimate (one can see \cite{bib:20} for the proof)
	\begin{alignat}{2}\label{eq:1.1}
	\| \mathcal{H}_0^{s/2}(fg)\|_{L^p(\mathds{R})}
	\leq C \| \mathcal{H}_0^{s/2} f \|_{L^{p_1}(\mathds{R})} \| g \|_{L^{p_2}(\mathds{R})} +
	C \| f \|_{L^{p_3}(\mathds{R})} \| \mathcal{H}_0^{s/2} g \|_{L^{p_4}(\mathds{R})},
	\end{alignat}
where the  parameters $s, p, p_j, j=1,\dots,4,$ satisfy
	\[
	s > 0,\ \
	1 < p, p_1,p_2,p_3,p_4 < \infty, \ \
	\frac{1}{p}= \frac{1}{p_1} + \frac{1}{p_2}
	= \frac{1}{p_3} + \frac{1}{p_4}.
	\]

The estimate can be considered as natural homogeneous version
of the non-homogeneous inequality of type \eqref{eq:1.1}
involving Bessel potentials $(1-\mathcal{H}_0)^{s/2}$ in the place of $\mathcal{H}_0^{s/2},$
obtained by Kato and Ponce in \cite{bib:23}
(for this the estimates of type \eqref{eq:1.1} are called Kato-Ponce estimates, too).
More general domain for parameters can be found in \cite{bib:17}.
A more precise estimate can be deduced when $0 < s < 1$.
More precisely,
Kenig, Ponce, and Vega \cite{bib:24} obtained the estimate
	\begin{align}
	\| \mathcal{H}_0^{s/2}(fg) - f \mathcal{H}_0^{s/2} g - g \mathcal{H}_0^{s/2} f\|_{L^p(\mathds{R})}
	\le C \| \mathcal{H}_0^{s_1/2} f \|_{L^{p_1}(\mathds{R})} \| \mathcal{H}_0^{s_2/2} g \|_{L^{p_2}(\mathds{R})},
	\label{eq:1.2}
	\end{align}
provided
	\[
	0 < s = s_1 + s_2 < 1, \ \ s_1, s_2 \ge 0,
	\]
and
	\begin{alignat}{2}\label{eq:1.3}
	1 < p, p_1,p_2 < \infty, \ \
	\frac{1}{p}= \frac{1}{p_1} +  \frac{1}{p_2} .
	\end{alignat}

Therefore, one can pose the question  to find appropriate short range assumptions on the perturbed Hamiltonian  so that the fractional Leibnitz rule \eqref{eq:1.1} or the more precise bilinear estimate \eqref{eq:1.2} are valid for this perturbed Hamiltonian.
Since the equivalence property \eqref{eq.II3} implies \eqref{eq:1.1}, it is important to determine admissible domain for the parameters $s >0, p \in (1,\infty),$
where  \eqref{eq.II3} holds. The uncertainty principle restriction $s < 1/p$ is a reasonable candidate and we aim at studying if this is the optimal domain where  \eqref{eq.II3} is fulfilled.

We can make another interpretation of  \eqref{eq.II3} connecting $\| \mathcal{H}_0^{s/2}f\|_{L^p(\mathds{R})}$ with  the homogeneous Sobolev  spaces $\dot{H}^s_p(\mathds{R})$ and observing that \eqref{eq.II3} guarantees the invariance of
the action of the wave operators
  $$ W_\pm = s - \lim_{t \to \pm \infty} P_{ac}(\mathcal{H})e^{it\mathcal{H}} e^{-it\mathcal{H}_0}$$
  on these homogeneous Sobolev spaces.

  The existence and completeness of the wave operators in  standard Hilbert space (typically Lebesgue space $L^2$) in case of short range perturbations is well known (see \cite{LPh64}, \cite{RSI78}, \cite{HII} and the references therein).
The  functional calculus for the absolutely continuous part $\mathcal{H}_{ac}= P_{ac}(\mathcal{H}) \mathcal{H} $ of the perturbed non-negative operator $\mathcal{H}$ can be introduced with a relation involving $W_\pm$
\begin{equation}\label{eq.III2}
   g(\mathcal{H}_{ac}) = W_+ g(\mathcal{H}_0)W_+^*= W_- g(\mathcal{H}_0)W_-^*,
\end{equation}
for any function $g \in L^\infty_{loc}(0,\infty).$ Moreover, the wave operators map unperturbed Sobolev spaces in the perturbed ones,
$$ W_\pm :  D(\mathcal{H}_0^{s/2}) \to  D(\mathcal{H}_{ac}^{s/2}) $$
and we have
$$ W_\pm : \dot{H}^s_p(\mathds{R})   \to  \dot{H}^s_{p,\mathcal{H}_{ac}}(\mathds{R}), \ \forall s \geq 0, \ 1 < p < \infty,$$
where $\dot{H}^s_{p,\mathcal{H}_{ac}}(\mathds{R})$ is the perturbed homogeneous Sobolev space generated by the Hamiltonian $\mathcal{H}_{ac}.$
More precisely, $\dot{H}^s_{p,\mathcal{H}_{ac}}(\mathds{R})$ is the  homogeneous Sobolev spaces associated with the absolutely continuous part $\mathcal{H}_{ac}$ of the  perturbed Hamiltonian $ \mathcal{H}= \mathcal{H}_0+V$.
This is
 the closure of  functions $f \in S(\mathds{R})$ orthogonal\footnote{ the precise definition of eigenvectors is given below in \eqref{V6a} } to the eigenvectors of $\mathcal{H}$ with respect to the norm
\begin{equation}\label{eq:BS2} \begin{aligned}
   \|  f\|_{ \dot{H}^s_{p,\mathcal{H}_{ac}}(\mathds{R})} =
\left\| \mathcal{H}^{s/2}_{ac}f\right\|_{L^p(\mathds{R})}.
\end{aligned}\end{equation}

The equivalence property \eqref{eq.II3}
implies that the homogeneous Sobolev space $\dot{H}^s_p(\mathds{R}) $ is  invariant under the action of the wave operators $W_\pm$
for $0 \leq s < 1/p$.

\section{Assumptions and main results}


The study of the dispersive properties of the evolution flow in some cases of short range perturbed Hamiltonians $\mathcal{H}$ shows  (see \cite{CGV}, \cite{GV2003}) that  homogeneous Sobolev norms for perturbed and unperturbed Hamiltonians are equivalent
\begin{equation}\label{eq.MR0}
    \| \mathcal{H}_{ac}^{s/2}f \|_{L^2(\mathds{R}^n)}  \sim \|\mathcal{H}_0^{s/2}f\|_{L^2(\mathds{R}^n)},
\end{equation}
provided $s < n/2.$ Our goal is to extend this equivalence to the case
\begin{equation}\label{eq.MR0a}
    \| \mathcal{H}_{ac}^{s/2}f \|_{L^p(\mathds{R}^n)}  \sim \|\mathcal{H}_0^{s/2}f\|_{L^p(\mathds{R}^n)},
\end{equation}
with $s<n/p.$

First, we shall show that the requirement $s<n/p$ is optimal, i.e. we shall prove the following result:

\begin{theorem} \label{l.co1} If $n\geq 1$ and $V (x)$ is defined as follows
\begin{equation}\label{eq.C1a1}
   V(x) = \frac{1}{1+|x|^3},
\end{equation}
 then \eqref{eq.II2} with $s=n/p \leq 2$ is not true.
\end{theorem}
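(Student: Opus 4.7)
The plan is to refute inequality \eqref{eq.II2} at the endpoint $s = n/p$ by testing against a fixed smooth bump centered at the origin. Choose $\phi \in C_c^\infty(\mathds{R}^n)$ with $\phi \equiv 1$ on $B(0,1/2)$. Since $|x|^{-n}$ fails to be locally integrable near the origin,
$$
\| \, |x|^{-n/p}\phi \, \|_{L^p(\mathds{R}^n)}^p \ \geq\ \int_{|x|\leq 1/2}|x|^{-n}\, dx \ =\ +\infty,
$$
so the left-hand side of \eqref{eq.II2} is already infinite on this $\phi$. It therefore suffices to prove that $\|\mathcal{H}_{ac}^{s/2}\phi\|_{L^p(\mathds{R}^n)}$ is finite for every $s \in [0, 2]$; the violation of \eqref{eq.II2} then follows automatically.

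The spectral setup is benign. Because $V(x)=(1+|x|^3)^{-1}$ is nonnegative and bounded, $\mathcal{H}=-\Delta+V$ is a nonnegative self-adjoint operator with no eigenvalues (integration by parts against any $L^2$ solution of $(-\Delta+V)u=0$ forces $u\equiv 0$, and positive eigenvalues are excluded by standard Kato-type arguments for short-range $V$), so $\mathcal{H}_{ac}=\mathcal{H}$. Next I would invoke the Feynman--Kac pointwise domination $0\leq e^{-t\mathcal{H}}\leq e^{-t\mathcal{H}_0}$, which upgrades to $\|e^{-t\mathcal{H}}\|_{L^p\to L^p}\leq 1$ for every $p\in[1,\infty]$ and hence, by Hille--Yosida, to the uniform resolvent estimate $\|(\lambda+\mathcal{H})^{-1}\|_{L^p\to L^p}\leq 1/\lambda$ for $\lambda>0$. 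Plugging this into the Balakrishnan representation
$$
\mathcal{H}^{s/2}\phi\ =\ \frac{\sin(\pi s/2)}{\pi}\int_0^\infty \lambda^{s/2-1}(\lambda+\mathcal{H})^{-1}\mathcal{H}\phi\, d\lambda, \qquad s\in(0,2),
$$
using $(\lambda+\mathcal{H})^{-1}\mathcal{H}\phi=\phi-\lambda(\lambda+\mathcal{H})^{-1}\phi$ for small $\lambda$ (which gives an $L^p$ bound of $2\|\phi\|_{L^p}$) and $\|(\lambda+\mathcal{H})^{-1}\mathcal{H}\phi\|_{L^p}\leq \lambda^{-1}\|\mathcal{H}\phi\|_{L^p}$ for large $\lambda$, then splitting the $\lambda$-integral at $\lambda=1$, I would reach
$$
\|\mathcal{H}^{s/2}\phi\|_{L^p(\mathds{R}^n)}\ \leq\ C\bigl(\|\phi\|_{L^p}+\|\mathcal{H}\phi\|_{L^p}\bigr)\ <\ \infty,
$$
since $\mathcal{H}\phi=-\Delta\phi+V\phi$ is bounded and compactly supported. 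The endpoints $s=0$ and $s=2$ are immediate.

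The hard part will be the uniform $L^p$ resolvent bound with constant $1/\lambda$; this is precisely where the sign condition $V\geq 0$ built into \eqref{eq.C1a1} becomes crucial, as it makes Feynman--Kac available and sidesteps the more delicate Mihlin--H\"ormander functional calculus for $\mathcal{H}$ that would be needed for a generic signed short-range potential. The outcome is that the critical exponent $s=n/p$ cannot be attained in \eqref{eq.II2}, confirming that the admissible range $s\in[0,n/p)$ stated in the introduction is sharp.
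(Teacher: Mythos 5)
Your argument is internally correct: with $V\ge 0$ the operator $\mathcal{H}$ has no point spectrum, domination of $e^{-t\mathcal{H}}$ by $e^{-t\mathcal{H}_0}$ gives the resolvent bound $\|(\lambda+\mathcal{H})^{-1}\|_{L^p\to L^p}\le \lambda^{-1}$, the Balakrishnan integral then yields $\|\mathcal{H}^{s/2}\phi\|_{L^p}<\infty$ for a bump $\phi$ and all $s\in[0,2]$, and $\|\,|x|^{-n/p}\phi\|_{L^p}=\infty$, so \eqref{eq.II2} as literally written fails at $s=n/p$. However, this is a genuinely different — and much weaker — route than the paper's, and the difference matters: your counterexample never sees the potential. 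The identical bump disproves the \emph{free} Hardy inequality \eqref{eq.II1} at $s=n/p$ for the same local non-integrability reason, so your proof would go through verbatim with $V=0$ and says nothing about the specific choice \eqref{eq.C1a1}. What the paper actually proves (see the section on the counterexample, where Theorem \ref{l.co1} is explicitly identified with the failure of \eqref{eq.c1}) is the breakdown at $s=n/p$ of the norm \emph{equivalence} $\|\mathcal{H}^{n/(2p)}u\|_{L^p}\sim\|\mathcal{H}_0^{n/(2p)}u\|_{L^p}$ — the statement that makes the restriction $s<n/p$ in \eqref{eq.II3} and Theorem \ref{MT1} sharp. The paper's route is: assume the equivalence; use the two-sided Gaussian heat-kernel bounds for $e^{-t\mathcal{H}}$ to obtain $\|Vg\|_{L^p}\le C\|\mathcal{H}g\|_{L^p}$; interpolate (using $p\ge n/2$ and $V^{n/2}\in L^1$) to $\int_{\mathds{R}^n} V^{n/2}|u|^p\,dx\le C\|\mathcal{H}_0^{n/(2p)}u\|_{L^p}^p$; exploit the dilation and translation invariance of the free homogeneous norm to upgrade this to the false endpoint embedding \eqref{eq.co2a2}, i.e. $\dot H^{n/p}_p(\mathds{R}^n)\hookrightarrow L^\infty(\mathds{R}^n)$; and refute that with the logarithmic stack $\phi_N=\sum_{j\le N}|x|^{-n/p}\mathds{1}_{2^j\le|x|\le 2^{j+1}}$, for which $\|I_{n/p}(\phi_N)\|_{L^\infty}\gtrsim N$ while $\|\phi_N\|_{L^p}^p\lesssim N$. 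Your approach buys a short, self-contained disproof of the literal endpoint Hardy inequality; the paper's buys the optimality of the main equivalence theorem, and your bump-function test cannot be adapted to that purpose because it never compares the perturbed and unperturbed norms.
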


Our next goal is to obtain  \eqref{eq.II2} in the admissible range $s \in [0,n/p)$ for  the case $n=1.$ First we shall describe the assumptions on the potential $V.$

We shall assume that the potential  $V:\mathds{R}\to \mathds{R}$ is a real-valued potential, $V \in L^1(\mathds{R})$ and $V$ is decaying sufficiently rapidly at infinity, namely following \cite{W} we require
  \begin{equation}\label{V6}
    \|\langle x \rangle^\gamma V\|_{L^1(\mathds{R})}  < \infty, \  \ \gamma \geq 1,
\end{equation}
or equivalently we assume $ V\in L^1_{\gamma}(\mathds{R}) $, where
$$ \  L^1_\gamma(\mathds{R}) = \{f \in L^1_{loc}( \mathds{R}) ; \langle x \rangle^\gamma f(x) \in L^1(\mathds{R}) \}, \ \langle x \rangle^2 = 1+x^2. $$

Our key assumption on $V$ is that zero is not a resonance point. The precise definition of the notion of resonance point at the origin is given in Definition \ref{dres} by the aid of the relation $$T(0)=0.$$

The point spectrum of $\mathcal{H}$  consists of real numbers $\lambda \in (-\infty,0],$ such that
\begin{equation}\label{V6a}
   \mathcal{H} f - \lambda  f = 0, \ f \in L^2(\mathds{R}),
\end{equation}
and absolutely continuous part $[0,\infty).$ We shall denote by $L^2_{pp}(\mathds{R})$ the linear space generated by the eigenvectors $f$ in \eqref{V6a}. This is finite dimensional space and its orthogonal complement in $L^2$ is the invariant subspace, where the perturbed Hamiltonian $\mathcal{H}$ is absolutely continuous.

The key tool to prove the Hardy inequality and the fractional Leibnitz rule \eqref{eq:1.2} is the following estimate.
\begin{theorem} \label{MT1}
Suppose $$V\in L^1_\gamma(\mathds{R}), \  \gamma > 1, \  s=\gamma -1 < 1/p , \  p \in (1,\infty)$$ and  the perturbed Hamiltonian  $\mathcal{H}$ has no  resonance at the origin. Then there exists a positive constant $C=C(s,p)>0$ so that  we have
$$  \| (\mathcal{H}_{ac}^{s/2}  - \mathcal{H}_0^{s/2}) f \|_{ L^p(\mathds{R})} \leq C\|  f\|_{ L^q(\mathds{R})}, $$
for $1/p-1/q=s$ and $f \in S(\mathds{R})$.
\end{theorem}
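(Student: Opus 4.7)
The plan is to represent the difference of fractional powers through the Balakrishnan integral formula
$$\mathcal{H}^{s/2} \;=\; \frac{\sin(\pi s/2)}{\pi}\int_0^\infty \lambda^{s/2-1}\,\mathcal{H}(\lambda+\mathcal{H})^{-1}\,d\lambda,\qquad 0<s<2,$$
applied to both $\mathcal{H}_0$ and to $\mathcal{H}$ restricted to its absolutely continuous subspace. Writing $\mathcal{H}(\lambda+\mathcal{H})^{-1} = I - \lambda(\lambda+\mathcal{H})^{-1}$ makes the identity contributions cancel in the difference, modulo the projector onto the point spectrum, which is finite-rank and smoothing since bound states decay exponentially under $V\in L^1_\gamma$. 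The resolvent identity then exposes the potential as a distinguished factor:
$$\bigl(\mathcal{H}_{ac}^{s/2}-\mathcal{H}_0^{s/2}\bigr)f \;=\; c_s\int_0^\infty \lambda^{s/2}\,(\lambda+\mathcal{H})^{-1}P_{ac}\,V\,(\lambda+\mathcal{H}_0)^{-1}\,f\,d\lambda\;+\;R f,$$
where $R$ is the finite-rank remainder coming from the point spectrum correction.

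Next I would bound the integral kernel of the main term. The free resolvent has the explicit kernel $K_0(x,z;\lambda)=(2\sqrt{\lambda})^{-1}e^{-\sqrt{\lambda}|x-z|}$, while the perturbed resolvent admits a Jost-function representation with Wronskian $W(k)=2ik/T(k)$. The non-resonance hypothesis $T(0)\neq 0$ forces $W(k)\sim 2ik$ as $k\to 0$, matching the free singularity and yielding a uniform pointwise bound on $K_{\mathcal{H}}(x,z;\lambda)$ down to the spectral threshold. Performing the $\lambda$-integration after the substitution $u=\sqrt{\lambda}$ reduces to a Gamma-type integral and produces an explicit kernel for the difference operator whose size is controlled by a weighted convolution of $|V|$ with a Riesz-type kernel of scaling compatible with the exponent $s$.

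The third step is to convert this kernel estimate into the desired $L^q\to L^p$ bound with $1/p-1/q=s$. The weighted integrability $\|\langle z\rangle^{\gamma}V\|_{L^1}<\infty$ with $\gamma=s+1$ enters precisely here: the extra decay $\langle z\rangle^{s+1}$ compensates for the scaling of the Riesz-type kernel in the $z$-variable. Splitting the $z$-integration according to whether $|z|$ is comparable to $\max(|x|,|y|)$ or not, each piece is treated by a Schur-type argument combined with the one-dimensional Hardy--Littlewood--Sobolev inequality; the constraint $s<1/p$ is what guarantees the HLS exponents lie in the admissible range.

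The principal difficulty lies in the low-energy analysis. Without the non-resonance hypothesis, the Wronskian would vanish faster than $k$ near the origin, and the integrand $\lambda^{s/2}(\lambda+\mathcal{H})^{-1}V(\lambda+\mathcal{H}_0)^{-1}$ would fail to be integrable at $\lambda=0$; the assumption $T(0)\neq 0$ is exactly what is needed to obtain uniform kernel bounds down to the spectral threshold and to make the Balakrishnan integral converge. The high-frequency regime $\lambda\geq 1$ is much more routine, amenable to a convergent Born expansion that reduces it to estimates on the free resolvent.
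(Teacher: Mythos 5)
Your route (Balakrishnan's formula plus the resolvent identity) is genuinely different from the paper's, which never touches resolvents: the paper proves the equivalent square-function estimate $\bigl\| \| 2^{ks}(\pi_k^{ac}-\pi_k^0)f\|_{\ell^2_k}\bigr\|_{L^p}\leq C\|f\|_{L^q}$ (Lemma \ref{KL1}) by writing the kernel of $\varphi(\sqrt{\mathcal{H}}/2^k)$ as a filtered Fourier transform of the modified Jost functions and of $T,R_\pm$, splitting off a leading term $I_k$ and estimating the remainder through pointwise bounds of the form $|\mathcal{F}_{\varphi,M}(\widetilde{m_\pm}(x,\cdot))(\xi)|\le f_1(x)f_2^{(M)}(\xi)$ with $f_1\in L^{1/s,\infty}$ and $f_2^{(M)}\in L^1$, then summing over $k$ via the equivalence \eqref{eq:MR1}. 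A resolvent-based proof could in principle be organized, so the difference of strategy is not by itself an objection.

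There is, however, a genuine gap in your low-energy analysis, which you yourself identify as the principal difficulty. In this paper the non-resonance condition is $T(0)=0$ (Definition \ref{dres}), not $T(0)\neq 0$ as you assume; you have the convention reversed. Consequently the effect on the Wronskian $W(k)=2ik/T(k)$ is the opposite of what you claim: under $T(\tau)=\alpha\tau+o(\tau)$ with $\alpha\neq 0$ one gets $W(0)=2i/\alpha\neq 0$, so the perturbed resolvent kernel is \emph{bounded} at the threshold, while the free kernel $e^{-\sqrt{\lambda}|x-z|}/(2\sqrt{\lambda})$ still blows up like $\lambda^{-1/2}$; the two do not match, and the case where they do match ($T(0)\neq 0$, e.g.\ $V=0$) is precisely the resonant case excluded here. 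The gain of $s$ derivatives at low frequency therefore cannot come from a cancellation of matching threshold singularities; in the paper it comes from the vanishing $T(\tau)=O(\tau)$, $1+R_\pm(\tau)=O(\tau)$ (estimates \eqref{eq.mas1}) combined with the spatial decay of $\widetilde{m_\pm}$ inherited from $\langle x\rangle^{1+s}V\in L^1$. Relatedly, the target bound is $L^q\to L^p$ with $1/p=1/q+s$, i.e.\ $p<q$: Hardy--Littlewood--Sobolev moves in the opposite direction and cannot produce a loss of integrability; what is needed is a weight lying in $L^{1/s,\infty}$ (supplied by the decay of $m_\pm-1$ and of $V$) used through H\"older in Lorentz spaces together with Young's inequality, which is exactly how the paper closes the estimate. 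As written, your low-energy step would not go through.
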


It is natural to use a Paley-Littlewood localization associated with the perturbed Hamiltonian. Here and below
$\varphi(\tau) \in C_0^\infty(\mathds{R} \smallsetminus 0)$ is a non-negative  even function, such that
\begin{equation}\label{eq.PL1}
 \sum_{j \in \mathds{Z}} \varphi \left(
\frac{ \tau}{2^j} \right) = 1 \ , \ \ \forall \ \tau \in \mathds{R} \setminus  0
\end{equation}
and
\begin{equation}\label{eq.PL2}
\varphi\left( \frac{\tau}{2^k}  \right) \varphi\left( \frac{\tau}{2^\ell}  \right) = 0 , \ \forall \ k,\ell \in \mathds{Z} , \ |k-\ell| \geq 2.
\end{equation}
We set
\begin{equation}\label{eq.ES1}
\pi_k^{ac} = \varphi \left( \frac{\sqrt{\mathcal{H}_{ac}}}{2^k} \right), \ \pi^0_k = \varphi \left( \frac{\sqrt{\mathcal{H}_0}}{2^k}  \right).
\end{equation}

We have the following equivalent norm (see \cite{Ze10})
\begin{equation}\label{eq:MR1} \begin{aligned}
   \|  f\|_{ \dot{H}^s_{p,\mathcal{H}_{ac}}(\mathds{R})} \sim
\left\| \left( \sum_{k=-\infty}^\infty 2^{2ks} \left| \pi_k^{ac}f\right|^2 \right)^{1/2} \right\|_{L^p(\mathds{R})}.
\end{aligned}
\end{equation}

Our approach to prove Theorem \ref{MT1} is based on establishing estimate of the type.

\begin{lemma} \label{KL1}If the assumptions of Theorem \ref{MT1}  are fulfilled, then  for any $s \in (0,1/p)$ and $q \in (1,\infty)$ defined by
$$ \frac{1}{p} - \frac{1}{q} = s $$
we have
\begin{equation}\label{MR2}
\left\| \left\| 2^{ks} \left( \pi_k^{ac}-\pi_k^0\right)  f \right\|_{\ell^2_k}   \right\|_{L^p_x(\mathds{R})} \leq C \| f\|_{L^q(\mathds{R})}.
\end{equation}
\end{lemma}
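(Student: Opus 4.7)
The plan is to represent $\pi_k^{ac}$ via the distorted Fourier transform associated with $\mathcal{H}$ and to derive pointwise bounds on the kernel $K_k(x,y):=\pi_k^{ac}(x,y)-\pi_k^0(x,y)$. Let $f_\pm(x,\xi)$ denote the Jost solutions of $\mathcal{H}f=\xi^2 f$, normalized by $f_\pm(x,\xi)\sim e^{\pm i\xi x}$ as $x\to\pm\infty$, and set $m_\pm(x,\xi):=e^{\mp i\xi x}f_\pm(x,\xi)-1$. The key scattering-theoretic input I expect to need is a Volterra-type bound, valid under $V\in L^1_\gamma$ with $\gamma=s+1>1$ and the no-resonance hypothesis $T(0)=0$, of the schematic form
$$
|m_\pm(x,\xi)|\ \lesssim\ \min\bigl(|\xi|^{s},\,1\bigr)\,\langle x\rangle\,\|V\|_{L^1_\gamma(\mathds{R})},
$$
together with companion estimates on $\partial_\xi m_\pm$ that make subsequent integrations by parts admissible. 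The vanishing $|\xi|^{s}$ at the origin is precisely what $T(0)=0$ buys, and is the ingredient that makes the threshold $s=\gamma-1$ sharp: in the resonant case one would only have $|m_\pm|\lesssim 1$ and the argument below would fail.

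Next, I would insert the spectral resolution of $\mathcal{H}_{ac}$, which expresses $\pi_k^{ac}(x,y)$ as an integral over $\xi\in\mathds{R}$ of products $f_\pm(x,\xi)\overline{f_\pm(y,\xi)}$ weighted by $\varphi(|\xi|/2^k)$ and the transmission/reflection coefficients. Subtracting the Fourier formula for $\pi_k^0(x,y)$ and expanding $f_\pm=e^{\pm i\xi\cdot}(1+m_\pm)$ rewrites $K_k(x,y)$ as a sum of oscillatory integrals
$$
\int \varphi(|\xi|/2^k)\,e^{i\xi(x-y)}\,B(x,y,\xi)\,d\xi,
$$
where $B$ is linear or quadratic in $m_\pm$. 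Combining the bound above with integration by parts in $\xi$ (at low frequencies exploiting $|m_\pm|\lesssim|\xi|^{s}$, at high frequencies using smoothness of $m_\pm$ away from $\xi=0$) is designed to yield the schematic pointwise estimate
$$
|K_k(x,y)|\ \lesssim\ 2^{k(1+s)}\,\langle x\rangle\langle y\rangle\,\eta_k(x-y),\quad k\leq 0,
$$
and a parallel bound for $k\geq 0$ carrying a negative power $2^{-k\beta}$ with $\beta>s$, where $\eta_k$ is rapidly decaying on scale $2^{-k}$.

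With such pointwise control in hand, the square function $\bigl(\sum_k 2^{2ks}|K_k(x,y)|^2\bigr)^{1/2}$ is dominated by a kernel of the form $G(x,y)=\langle x\rangle\langle y\rangle\,\mathcal{I}_s(x-y)$, in which $\mathcal{I}_s$ behaves like a Riesz potential of order $s$ in one dimension; the desired $L^q\to L^p$ mapping with $1/p-1/q=s$ then reduces to the one-dimensional Hardy--Littlewood--Sobolev inequality in the admissible range $0<s<1/p$, the spatial weights $\langle x\rangle,\langle y\rangle$ being absorbed via H\"older against $\|V\|_{L^1_\gamma}<\infty$. The main obstacle I anticipate lies in the first step: translating $T(0)=0$ into the quantitative vanishing $|m_\pm(x,\xi)|\lesssim |\xi|^{s}\langle x\rangle$, with the $x$-weight matched exactly to $\gamma>1$, and establishing the accompanying $\xi$-derivative bounds required for the integration by parts, is the technical heart of the argument. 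The high-frequency kernel bound and the concluding Hardy--Littlewood--Sobolev step should by contrast be comparatively routine.
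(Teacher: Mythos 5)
There is a genuine gap, and it sits exactly where you predicted the ``technical heart'' would be. Your key input $|m_\pm(x,\xi)|\lesssim \min(|\xi|^{s},1)\,\langle x\rangle$ (with your $m_\pm=e^{\mp i\xi x}f_\pm-1$, the paper's $\widetilde{m_\pm}$) is false: the hypothesis $T(0)=0$ forces vanishing of the \emph{scattering coefficients} at the origin, $T(\tau)=\alpha\tau+o(\tau)$ and $1+R_\pm(\tau)=\alpha_\pm\tau+o(\tau)$, but it does not make the modified Jost functions tend to $1$ as $\xi\to0$; $m_\pm(x,0)$ solves a nontrivial zero-energy Volterra equation and differs from $1$ for generic $V$. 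What $V\in L^1_\gamma$ actually provides for $m_\pm-1$ is \emph{spatial} decay, $|m_\pm(x,\xi)-1|\lesssim \langle x_\mp\rangle\langle x_\pm\rangle^{-(\gamma-1)}$ uniformly in $\xi$ (Lemma \ref{lem:Jostk0}), i.e.\ a decaying weight $\langle x\rangle^{-s}\in L^{1/s,\infty}$ on the relevant half-line --- not your growing weight $\langle x\rangle\langle y\rangle$, which cannot be ``absorbed against $\|V\|_{L^1_\gamma}$'' because $f$ is an arbitrary $L^q$ function with no relation to $V$. Relatedly, your concluding reduction to Hardy--Littlewood--Sobolev points the wrong way: a Riesz potential maps $L^q\to L^r$ with $1/r=1/q-s$, i.e.\ it \emph{raises} integrability, whereas the lemma requires $L^q\to L^p$ with $1/p=1/q+s$, a \emph{drop} that no translation-invariant convolution can produce. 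The drop must come from H\"older's inequality in Lorentz spaces against the $L^{1/s,\infty}$ weight supplied by the spatial decay of $m_\pm-1$, the $\xi$-integral contributing only an $L^1$ convolution kernel (with norm $O(\langle 2^k\rangle^{-1})$) handled by Young.

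Your expansion also discards the hardest terms. After writing $f_\pm=e^{\pm i\xi\cdot}(1+m_\pm)$ and subtracting $\pi_k^0$, the kernel retains contributions of order zero in $m_\pm$, such as $(T(\xi)-1)e^{i\xi(x-y)}$ and $R_\pm(\xi)e^{\pm i\xi(x+y)}$; these carry no spatial decay, so the weighted-kernel scheme cannot reach them, and they are precisely where $T(0)=0$ is needed. The paper's proof therefore splits $\pi_k^{ac}-\pi_k^0=(\pi_k^{ac}-I_k)+(I_k-\pi_k^0)$: the remainder part (everything containing a factor $m_\pm-1$) is handled by the weight/Young argument above and summed crudely in $\ell^1_k$ thanks to the gain $2^{ks}\langle 2^k\rangle^{-1}$ (estimate \eqref{6.3}); the leading part is handled by almost orthogonality, $(I_k-\pi_k^0)f=(I_k-\pi_k^0)f^0_{k-2,k+2}$, combined with the low-frequency smallness $\|\varphi(\cdot)T(2^k\cdot)\|_{C^{0,\sigma}}+\|\varphi(\cdot)(R_\pm(2^k\cdot)+1)\|_{C^{0,\sigma}}\lesssim 2^k$ for $k\le 0$ from Lemma \ref{lem:TCH1}, and integration by parts exploiting that these kernels are functions of $x+y$ restricted to a quadrant (estimate \eqref{6.3b}). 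To repair your argument you would need to reinstate these zero-order terms and import both mechanisms; the pointwise kernel bounds you propose do not suffice on their own.
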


Indeed if this estimate is verified, then we can use \eqref{eq:MR1} and see that \eqref{MR2} implies the assertion of Theorem \ref{MT1}.

Therefore, the estimate \eqref{MR2} is the key point in the proof of  Theorem \ref{MT1}.

\begin{corollary}
If the assumptions of Theorem \ref{MT1}  are fulfilled, then the equivalence property \eqref{eq.II3} holds.
\end{corollary}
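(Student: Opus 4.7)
The plan is to deduce the equivalence \eqref{eq.II3} directly from the off-diagonal estimate of Theorem \ref{MT1} by coupling it with the classical one-dimensional homogeneous Sobolev embedding
$$\|f\|_{L^q(\mathds{R})} \le C_1 \|\mathcal{H}_0^{s/2} f\|_{L^p(\mathds{R})}, \qquad \tfrac{1}{q}=\tfrac{1}{p}-s,$$
which is available in precisely the admissible range $0 \le s < 1/p$ covered by the hypotheses of Theorem \ref{MT1}.

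For the forward estimate $\|\mathcal{H}_{ac}^{s/2} f\|_{L^p} \lesssim \|\mathcal{H}_0^{s/2} f\|_{L^p}$ I would simply substitute this embedding into the right-hand side of Theorem \ref{MT1}, obtaining
$$\|(\mathcal{H}_{ac}^{s/2} - \mathcal{H}_0^{s/2}) f\|_{L^p(\mathds{R})} \le C\|f\|_{L^q(\mathds{R})} \le C C_1 \|\mathcal{H}_0^{s/2} f\|_{L^p(\mathds{R})},$$
so that the triangle inequality yields the conclusion with constant $1 + CC_1$. This half is essentially automatic once Theorem \ref{MT1} is in hand.

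The hard part is the reverse bound $\|\mathcal{H}_0^{s/2} f\|_{L^p} \lesssim \|\mathcal{H}_{ac}^{s/2} f\|_{L^p}$. The triangle inequality only gives
$$\|\mathcal{H}_0^{s/2} f\|_{L^p} \le \|\mathcal{H}_{ac}^{s/2} f\|_{L^p} + C\|f\|_{L^q},$$
and the naive attempt to absorb $C\|f\|_{L^q}$ by re-using the classical Sobolev embedding only reproduces $\|\mathcal{H}_0^{s/2} f\|_{L^p}$ on the right, closing the argument only under the unwarranted smallness $C C_1 < 1$. To handle this unconditionally I would reinterpret $R := \mathcal{H}_{ac}^{s/2} - \mathcal{H}_0^{s/2}$ as a perturbation of the isomorphism $\mathcal{H}_0^{s/2}: \dot{H}^s_p(\mathds{R}) \to L^p(\mathds{R})$ that factors through the continuous inclusion $\dot{H}^s_p \hookrightarrow L^q$; restricted to the orthogonal complement of the eigenspace of $\mathcal{H}$, the no-resonance assumption at zero rules out a nontrivial kernel of $\mathcal{H}_{ac}^{s/2}$, and a Fredholm/open-mapping argument then identifies $\mathcal{H}_{ac}^{s/2}$ as a topological isomorphism from $\dot{H}^s_{p,\mathcal{H}_{ac}}(\mathds{R})$ onto $L^p(\mathds{R})$, delivering the reverse estimate. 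Equivalently, one may invoke $L^p$-boundedness of the wave operators $W_\pm$ — available under the short-range hypothesis \eqref{V6} — together with the intertwining relation \eqref{eq.III2} to transport the classical Sobolev embedding from $\mathcal{H}_0$ to $\mathcal{H}_{ac}$ directly, thereby producing $\|f\|_{L^q} \lesssim \|\mathcal{H}_{ac}^{s/2} f\|_{L^p}$ and closing the estimate.
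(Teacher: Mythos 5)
Your proposal is correct and, in its final form, follows essentially the same route as the paper: the forward bound via Theorem \ref{MT1} plus the free Sobolev embedding \eqref{eq.MR5}, and the reverse bound via the \emph{perturbed} Sobolev embedding $\|f\|_{L^q(\mathds{R})}\le C\|\mathcal{H}_{ac}^{s/2}f\|_{L^p(\mathds{R})}$, which the paper obtains exactly as in your last sentence, from the $L^p$-boundedness of the wave operators (giving the Bernstein inequality \eqref{eq.MR3} for $\pi_k^{ac}$ and hence \eqref{eq.MR4} through the square-function characterization \eqref{eq:MR1}). You are also right that this direction cannot be closed by reusing the free embedding without an unwarranted smallness assumption. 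One caution: of your two suggested alternatives for the hard direction, only the wave-operator one actually works. The Fredholm/open-mapping sketch is circular as stated: that $\mathcal{H}_{ac}^{s/2}$ is an isomorphism from $\dot{H}^s_{p,\mathcal{H}_{ac}}(\mathds{R})$ onto its range in $L^p$ is essentially the definition of the norm \eqref{eq:BS2}, and abstract Fredholm theory does not produce the quantitative embedding $\dot{H}^s_{p,\mathcal{H}_{ac}}\hookrightarrow L^q$ that is the actual content needed; that embedding has to come from the wave operators (or an equivalent spectral-multiplier input), as in the paper.
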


\begin{proof}
 The results in \cite{DeiTru}, \cite{W0}, \cite{AY}, \cite{DF06}, \cite{Ze10} imply the existence and continuity of the wave operators in $L^p$, $1 < p < \infty,$ so
one can deduce Bernstein inequality
\begin{equation}\label{eq.MR3}
   \left\| \pi^{ac}_k f \right\|_{L^q(\mathds{R})} \leq C (2^k)^{1/p-1/q} \|f\|_{L^p(\mathds{R})}, \ \ 1 \leq p \leq q \leq \infty, \ k\in \mathbb{Z}
\end{equation}
and via the equivalence property \eqref{eq:MR1} we deduce  the Sobolev estimate
\begin{equation}\label{eq.MR4}
   \left\|  f \right\|_{L^q(\mathds{R})} \leq C  \|\mathcal{H}_{ac}^{s/2}f\|_{L^p(\mathds{R})}, \ \ 1 < p < q < \infty, \ \ s = \frac{1}{p}-\frac{1}{q}.
\end{equation}

From the estimate of Theorem \ref{MT1} now we can write
$$  \| (\mathcal{H}_{ac}^{s/2}  - \mathcal{H}_0^{s/2} )f \|_{ L^p(\mathds{R})} \leq C\|  f\|_{ L^q(\mathds{R})} \leq C  \|\mathcal{H}_{ac}^{s/2}f\|_{L^p(\mathds{R})} , $$
so we have
$$ \|  \mathcal{H}_0^{s/2} f \|_{ L^p(\mathds{R})} \leq C \|\mathcal{H}_{ac}^{s/2}f\|_{L^p(\mathds{R})}.$$
The opposite estimate can be deduced in the same way from Theorem \ref{MT1} and the "free" Sobolev estimate
\begin{equation}\label{eq.MR5}
   \left\|  f \right\|_{L^q(\mathds{R})} \leq C  \|\mathcal{H}_{0}^{s/2}f\|_{L^p(\mathds{R})}, \ \ 1 < p < q < \infty, \ \ s = \frac{1}{p}-\frac{1}{q}.
\end{equation}

This completes the proof.
\end{proof}

Theorem \ref{MT1} has also the following simple consequences.
\begin{corollary}
If the assumptions of Theorem \ref{MT1}  are fulfilled, then the Hardy inequality \eqref{eq.II2} holds.
\end{corollary}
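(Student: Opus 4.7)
The plan is to combine the classical (unperturbed) Hardy inequality \eqref{eq.II1} with the norm equivalence \eqref{eq.II3} just established in the previous corollary. For $n=1$ and any $s \in [0,1/p)$, inequality \eqref{eq.II1} gives
$$
\| \, |x|^{-s} f \|_{L^p(\mathds{R})} \leq C \| \mathcal{H}_0^{s/2} f \|_{L^p(\mathds{R})},
$$
so it suffices to bound $\| \mathcal{H}_0^{s/2} f \|_{L^p}$ by a constant times $\| \mathcal{H}_{ac}^{s/2} f \|_{L^p}$, and the preceding corollary does exactly that under the assumptions of Theorem \ref{MT1}.

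If one prefers to avoid a forward reference and argue directly from Theorem \ref{MT1}, the same conclusion follows in two short steps. First, by the triangle inequality and Theorem \ref{MT1},
$$
\| \mathcal{H}_0^{s/2} f \|_{L^p(\mathds{R})} \leq \| (\mathcal{H}_0^{s/2} - \mathcal{H}_{ac}^{s/2}) f \|_{L^p(\mathds{R})} + \| \mathcal{H}_{ac}^{s/2} f \|_{L^p(\mathds{R})} \leq C \| f \|_{L^q(\mathds{R})} + \| \mathcal{H}_{ac}^{s/2} f \|_{L^p(\mathds{R})},
$$
with $1/q = 1/p - s$. Second, the perturbed Sobolev embedding \eqref{eq.MR4} yields $\|f\|_{L^q} \leq C \| \mathcal{H}_{ac}^{s/2} f \|_{L^p}$, whence $\| \mathcal{H}_0^{s/2} f \|_{L^p} \leq C \| \mathcal{H}_{ac}^{s/2} f \|_{L^p}$. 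Plugging this into the classical Hardy inequality completes the proof, with the estimate first verified on $f \in S(\mathds{R})$ orthogonal to the eigenvectors of $\mathcal{H}$ and then extended by density to the domain of $\mathcal{H}_{ac}^{s/2}$.

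There is essentially no obstacle here beyond ensuring that the range $s \in [0,1/p)$ is admissible simultaneously for the classical Hardy inequality, for Theorem \ref{MT1}, and for the Sobolev embedding \eqref{eq.MR4}; this is automatic since all three are valid precisely in that range (using $\gamma = s+1 > 1$ to match the decay assumption on $V$). The only minor care required is that the classical Hardy inequality in dimension one is stated for functions in $\dot{H}^s_p(\mathds{R})$, and we are applying it to $f$ in the dense subclass $S(\mathds{R})$ intersected with the orthogonal complement of the point spectrum, which is legitimate.
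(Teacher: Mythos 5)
Your proof is correct and follows exactly the route the paper intends: the corollary is left as a ``simple consequence'' because, as stated in the introduction, the equivalence property \eqref{eq.II3} (established in the preceding corollary from Theorem \ref{MT1} together with the Sobolev bounds \eqref{eq.MR4}--\eqref{eq.MR5}) shows that the classical Hardy inequality \eqref{eq.II1} implies \eqref{eq.II2}. Your direct two-step variant (triangle inequality plus Theorem \ref{MT1} plus \eqref{eq.MR4}) is just an unwinding of that same argument, and your remarks on the admissible range of $s$ and the density of Schwartz functions orthogonal to the point spectrum are appropriate.
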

\begin{corollary}
If the assumptions of Theorem \ref{MT1}  are fulfilled, then we have the  fractional Leibnitz rule, i.e.
\begin{align}
	\| \mathcal{H}_{ac}^{s/2}(fg) - f \mathcal{H}_{ac}^{s/2} g - g \mathcal{H}_{ac}^{s/2} f\|_{L^p(\mathds{R})}
	\le C \| \mathcal{H}_{ac}^{s_1/2} f \|_{L^{p_1}(\mathds{R})} \| \mathcal{H}_{ac}^{s_2/2} g \|_{L^{p_2}(\mathds{R})},
	\label{eq:1.2m}
	\end{align}
provided
\begin{alignat}{2}\label{eq:MR1.3}
	1 < p, p_1,p_2 < \infty, \ \
	\frac{1}{p}= \frac{1}{p_1} +  \frac{1}{p_2} 
	\end{alignat}
and 
	\[
	0 < s = s_1 + s_2 , \ \ s_1, s_2 \ge 0, s_1 < \frac{1}{p_1},\  s_2 < \frac{1}{p_2}.
	\]

\end{corollary}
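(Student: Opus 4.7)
The plan is to reduce the perturbed trilinear identity to the Kenig--Ponce--Vega estimate \eqref{eq:1.2} for $\mathcal H_0$ by inserting $\pm\mathcal H_0^{s/2}$ into each of the three terms on the left. Writing $E:=\mathcal H_{ac}^{s/2}-\mathcal H_0^{s/2}$ and $R_0(f,g):=\mathcal H_0^{s/2}(fg)-f\mathcal H_0^{s/2}g-g\mathcal H_0^{s/2}f$, the purely algebraic identity
\begin{equation*}
\mathcal H_{ac}^{s/2}(fg)-f\mathcal H_{ac}^{s/2}g-g\mathcal H_{ac}^{s/2}f=R_0(f,g)+E(fg)-f\,E(g)-g\,E(f)
\end{equation*}
splits the claim into four contributions. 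Note that $s_j<1/p_j$ together with $1/p=1/p_1+1/p_2$ forces $s=s_1+s_2<1/p<1$, so the free bound \eqref{eq:1.2}, Theorem~\ref{MT1} applied at the exponents $(s,p)$ and at each $(s_j,p_j)$, and the equivalence property \eqref{eq.II3} at $(s_j,p_j)$ are all simultaneously available.

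The term $R_0(f,g)$ is controlled directly by \eqref{eq:1.2}, and \eqref{eq.II3} converts the right-hand side into the desired perturbed norms. For $E(fg)$, Theorem~\ref{MT1} yields $\|E(fg)\|_{L^p}\le C\|fg\|_{L^q}$ with $1/q=1/p-s$; setting $1/q_j:=1/p_j-s_j$ one has $1/q_1+1/q_2=1/q$, and H\"older combined with the perturbed Sobolev embedding \eqref{eq.MR4} gives
\begin{equation*}
\|fg\|_{L^q}\le \|f\|_{L^{q_1}}\|g\|_{L^{q_2}}\le C\|\mathcal H_{ac}^{s_1/2}f\|_{L^{p_1}}\|\mathcal H_{ac}^{s_2/2}g\|_{L^{p_2}}.
\end{equation*}
For $f\,E(g)$ I use the off-diagonal H\"older splitting with $1/r_1:=1/p_1-s_1$ and $1/r_2:=1/p_2+s_1$, so that $1/r_1+1/r_2=1/p$ and the condition $s<1/r_2$ reduces exactly to $s_2<1/p_2$; Theorem~\ref{MT1} at the exponent $r_2$ then produces $\|E(g)\|_{L^{r_2}}\le C\|g\|_{L^{q_2}}$ for the same $q_2$ as above, while \eqref{eq.MR4} yields $\|f\|_{L^{r_1}}\le C\|\mathcal H_{ac}^{s_1/2}f\|_{L^{p_1}}$. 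The symmetric term $g\,E(f)$ is handled by swapping $f$ and $g$.

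The only genuine difficulty is the exponent bookkeeping in the treatment of $f\,E(g)$ and $g\,E(f)$: the off-diagonal H\"older splitting must be calibrated so that Theorem~\ref{MT1} at the auxiliary index $r_2$ (resp.\ $r_1$) lands precisely on the Sobolev endpoint compatible with $\mathcal H_{ac}^{s_2/2}g\in L^{p_2}$ (resp.\ $\mathcal H_{ac}^{s_1/2}f\in L^{p_1}$). It is exactly the two conditions $s_1<1/p_1$ and $s_2<1/p_2$, rather than the weaker single requirement $s<1/p$, that make such a calibration possible; beyond this, no ingredients are needed other than Theorem~\ref{MT1}, the equivalence \eqref{eq.II3}, the Sobolev embedding \eqref{eq.MR4}, and the free estimate \eqref{eq:1.2}.
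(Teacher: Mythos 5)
Your proof is correct. The paper states this corollary without any proof (as a ``simple consequence'' of Theorem~\ref{MT1}), and your reduction --- peeling off the free Kenig--Ponce--Vega commutator $R_0(f,g)$ and controlling $E(fg)$, $fE(g)$, $gE(f)$ via Theorem~\ref{MT1}, off-diagonal H\"older, and the Sobolev bound \eqref{eq.MR4} --- is exactly the argument the authors leave implicit; in particular your bookkeeping showing that $s<1/r_2$ is precisely $s_2<1/p_2$ is the point where the hypotheses $s_j<1/p_j$ (rather than merely $s<1/p$) are genuinely needed. Two harmless details you could make explicit: applying Theorem~\ref{MT1} and the equivalence \eqref{eq.II3} at the lower orders $s_j\le s$ and at the auxiliary exponent $(s,r_2)$ relies on the inclusion $L^1_{1+s}(\mathds{R})\subset L^1_{1+s_j}(\mathds{R})$, and in the endpoint case $s_j=0$ one has $q_j=p_j$, so the Sobolev step degenerates to the identity while \eqref{eq:1.2} still applies.
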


Alternative application of the equivalence of the homogeneous Sobolev norms can be connected with the fractional power of the pseudo conformal generators, defined by 
\begin{equation}\label{eq.cc5}
   |J_0(t)|^s = t e^{{\rm i}x^2/(4t)} \mathcal{H}_0^{s/2} e^{-{\rm i}x^2/(4t)},  s \geq 0.
\end{equation}
These operators commute with  the free Schr\"odinger group $ e^{-{\rm i} \mathcal{H}_0 t},$ $ \mathcal{H}_0= -\partial_x^2.$ 

Natural  generalization of \eqref{eq.cc5} for the case of perturbed Schr\"odinger group $ e^{-{\rm i} \mathcal{H} t},$ $ \mathcal{H}= -\partial_x^2+V$ with short range potential is introduced in \cite{CGV} as follows
\begin{equation}\label{eq.cc6}
   |J(t)|^s = t e^{{\rm i}x^2/(4t)} \mathcal{H}^{s/2} e^{-{\rm i}x^2/(4t)}, \ s \geq 0.
\end{equation}
 In the case $V=0$ we have 
 $$ \|(t\partial_x - {\rm i} x/2)  \left( e^{-{\rm i} \mathcal{H}_0 t} f\right) \|_{L^2} \sim \||J_0(t)| \left( e^{-{\rm i} \mathcal{H}_0 t} f\right) \|_{L^2} $$
 so 
 the conservation of the pseudo conformal energy
 \begin{equation}\label{eq.cc3}
   \|(t\partial_x - {\rm i} x/2)  \left( e^{-{\rm i} \mathcal{H}_0 t} f\right) \|_{L^2} = \frac{1}{2}\|x   f \|_{L^2}
 \end{equation}
 and interpolation argument imply 
 \begin{equation}\label{eq.cc7}
   \||J_0(t)|^s \left( e^{-{\rm i} \mathcal{H}_0 t} f\right) \|_{L^2} \leq C \left(\| f \|_{H^1(\mathds{R}) }+ \|x   f \|_{L^2}\right), \ \forall t >0,
 \end{equation}
 for any $s \in [0,1].$
 
 One can use the the equivalence result as stated in Theorem \ref{MT1} and deduce (see Lemma 5.1 in \cite{CGV})
 \begin{equation}\label{eq.cc8}
   \||J_0(t)|^s g \|_{L^2} \sim \||J(t)|^s g \|_{L^2}
 \end{equation}
 for any $s \in [0,1/2).$

 We turn now to possible inflation phenomena manifested by the pseudo conformal norms over the perturbed Schr\"odinger flow, i.e. we shall study the quantity
 $$ \||J_0(t)|^s \left( e^{-{\rm i} \mathcal{H} t} f\right) \|_{L^2} $$
 when $s=1>1/2.$

\begin{lemma}
\label{theo:1}
Assume the potential $V \in L^\infty \cap L^1_\gamma(\mathds{R})$ with $\gamma > 1$  is such that
\begin{equation}\label{eq.co32}
    \int_{\mathds{R}} V(y) dy > 0 .
\end{equation}
Then for any initial data $f(x) \in S(\mathds{R})$ with
 \begin{equation}\label{eq.co33}
    f(0) \neq 0
 \end{equation}
 we have
\begin{equation}
\label{200}
\limsup_{t \to \infty} \ \|(t\partial_x - {\rm i} x/2)  \left( e^{-{\rm i} \mathcal{H} t} f\right) \|_{L^2}  = \infty  .
\end{equation}
\end{lemma}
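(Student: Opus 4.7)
The plan is to rewrite \eqref{200} as a statement about the $L^2$-norm of the spatial derivative of the rescaled profile $v(t,x)=e^{-ix^2/(4t)}(e^{-i\mathcal{H}t}f)(x)$, and to show that the Duhamel correction produced by the potential forces a jump discontinuity in the limiting profile, whose non-vanishing is guaranteed by the hypotheses $\int V > 0$ and $f(0)\neq 0$.

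First, I would use the factorization $(t\partial_x - ix/2) = t\,e^{ix^2/(4t)}\partial_x e^{-ix^2/(4t)}$ to obtain
\[
\|(t\partial_x - ix/2)e^{-i\mathcal{H}t}f\|_{L^2} = t\,\|\partial_x v(t,\cdot)\|_{L^2},\qquad v(t,x) := e^{-ix^2/(4t)}(e^{-i\mathcal{H}t}f)(x),
\]
so that \eqref{200} reduces to showing that $t\|\partial_x v(t)\|_{L^2}\to\infty$ along a subsequence. Then I would apply Duhamel's formula and split $v=v_{\rm free}+v_{\rm int}$, where $v_{\rm free}(t,x) = e^{-ix^2/(4t)}(e^{-i\mathcal{H}_0 t}f)(x)$ satisfies the standard rescaled stationary-phase asymptotic $v_{\rm free}(t,x)\sim c\,t^{-1/2}\hat f(x/(2t))$, so that $t\|\partial_x v_{\rm free}\|_{L^2}$ remains uniformly bounded (by a multiple of $\|xf\|_{L^2}$, in agreement with the pseudo-conformal conservation \eqref{eq.cc3}).

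The real work is the analysis of the interaction term
\[
v_{\rm int}(t,x) = -i\int_0^t e^{-ix^2/(4t)}\big(e^{-i\mathcal{H}_0(t-s)}\,V\,e^{-i\mathcal{H}s}f\big)(x)\,ds.
\]
Plugging in the Gaussian kernel of the free propagator, expanding the phase in the spatial integration around the stationary point $y\sim x$, and exploiting the $O(s^{-1/2})$ pointwise dispersion of $e^{-i\mathcal{H}s}f$ (which makes the inner $y$-integral collapse, to leading order, onto $V(y)\cdot f(0)$ plus a long-time contribution proportional to $(4\pi is)^{-1/2}\hat V(\xi)$), one should prove that in the rescaled variable $\xi=x/(2t)$,
\[
\sqrt{t}\,v_{\rm int}(t,2t\xi)\longrightarrow N(\xi)\qquad\text{as } t\to\infty,
\]
for an explicit limit $N(\xi)$ that carries a jump at $\xi=0$ of magnitude proportional to $f(0)\cdot\int_{\mathds{R}}V(y)\,dy$. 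The hypotheses $f(0)\neq 0$ and $\int V>0$ combine to ensure that $\Delta:=N(0^+)-N(0^-)\neq 0$, so that this jump cannot cancel with the continuous contribution $\hat f$ coming from the free profile.

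Finally, a genuine jump of size $|\Delta|>0$ in the limit profile $M:=\hat f+N$ forces $\|\partial_x v(t,\cdot)\|_{L^2}$ to decay more slowly than $1/t$: the actual $v(t,\cdot)$ interpolates smoothly between $M(0^+)$ and $M(0^-)$ over a spatial window of width $\sim\sqrt{t}$ dictated by the stationary-phase resolution, which yields the lower bound
\[
t^2\|\partial_x v(t,\cdot)\|_{L^2}^2 \gtrsim c\,|\Delta|^2\,\sqrt{t}\longrightarrow\infty,
\]
and hence $\limsup_{t\to\infty}\|(t\partial_x - ix/2)e^{-i\mathcal{H}t}f\|_{L^2}=\infty$, as required. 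The main obstacle is the third step: precise identification of the jump in the asymptotic profile. It requires a careful stationary-phase / low-energy analysis of the nested oscillatory integral defining $v_{\rm int}$; the decay $V\in L^1_\gamma$ with $\gamma>1$ provides absolute convergence for the Born-type expansion of the scattering contribution, while $V\in L^\infty$ controls intermediate error terms, and the sign assumption $\int V > 0$ is what precludes accidental cancellation of the jump by higher-order scattering corrections.
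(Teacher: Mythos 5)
Your reduction to $t\|\partial_x v(t)\|_{L^2}$ and the observation that the free part stays bounded (by \eqref{eq.cc3}) are fine, but the heart of your argument --- that $\sqrt{t}\,v_{\rm int}(t,2t\xi)$ converges to a profile $N(\xi)$ with a jump at $\xi=0$ of size proportional to $f(0)\int V$ --- is asserted, not proved, and is almost certainly wrong in the form you state it. A pointwise value $f(0)$ cannot appear as the coefficient of a momentum-space singularity at $\xi=0$ produced by a stationary-phase expansion at fixed $\xi=x/(2t)$: what such an expansion produces is $\hat f(0)$, $\int Vf$, or the zero-energy value of the scattering profile. Concretely, the first Born approximation of the interaction profile behaves like $c\,\hat V(0)\hat f(0)/|\xi|$ near $\xi=0$ --- a $1/|\xi|$ singularity with coefficient $(\int V)(\int f)$, not a jump with coefficient $f(0)\int V$ --- and resumming the Born series changes the picture qualitatively: the full limiting profile is the distorted Fourier transform of $f$, which in the generic non-resonant case ($T(0)=0$, $R_\pm(0)=-1$) \emph{vanishes} at $\xi=0$ rather than jumping. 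Since the lemma makes no resonance assumption, your mechanism would have to survive this vanishing, and you give no argument for that. Moreover, even granting a non-$\dot H^1$ limit profile, deducing a lower bound $t^2\|\partial_x v(t)\|_{L^2}^2\gtrsim\sqrt t$ requires convergence to the profile in a topology that sees one derivative, with a quantified rate for the width of the transition region; $L^2$-scattering gives nothing of the sort, and this is essentially the statement being proved. So the proposal is a program with its decisive step missing.

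The paper's route is entirely different and much more elementary: it conjugates by the pseudo-conformal transform $B(T)=M(T)\sigma_T$ with $T=1/t$, so that the quantity in \eqref{200} becomes $\|(-\Delta)^{1/2}\Phi(T)\|_{L^2}$ for $\Phi(T)=U(T,1)\Phi_0$, where $U(T,S)$ is generated by $-\Delta(T)=-\Delta+T^{-2}V(x/T)$. Assuming for contradiction a uniform $H^1$ bound on $\Phi(T)$ and using the mapping property of $U(T,S)$ between the form domains of $-\Delta(T)$, one isolates the potential part of the quadratic form and obtains $\int V(y)|\Phi(T,yT)|^2\,dy\le CT$; letting $T\to 0$, the rescaled potential concentrates at the origin and, since $\int V>0$, forces $f(0)=0$. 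No Duhamel expansion, no oscillatory-integral asymptotics, and the hypothesis $f(0)\neq 0$ enters exactly through this spatial concentration --- which also explains why your attempt to locate $f(0)$ in a momentum-space jump was bound to misfire.
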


\section{Idea to prove the key Lemma \ref{KL1}}

Our main tool to study the  kernel
$$  \varphi \left( \frac{\sqrt{\mathcal{H}_{ac}}}{M} \right)(x,y)$$
is the following representation of the kernel as filtered Fourier transform
\begin{equation}\label{eq.ft1}
\mathcal{F}_{\varphi,M} (a)(\xi) = \int \varphi \left( \frac{\tau}{M} \right) a(\tau) e^{-{\rm i} \xi \tau} d\tau
\end{equation}
of symbols $a(\tau)$ represented as linear combinations with constant coefficients  of functions in the set
\begin{equation}\label{eq.DO3m}
\mathcal{ A } = \left\{\ 1 , \ T(\tau), \ R_\pm(\tau)   \ \right\},
\end{equation}
or more generally
of symbols involving functions
$a(x,\tau)$ represented as linear combinations with constant coefficients  of functions in the set
\begin{equation}\label{eq.DO3}
\mathcal{ B } = \left\{   \widetilde{m_\pm}(x,\tau) ,\   T(\tau) \widetilde{m_\pm}(x,\tau),\  R_\pm(\tau) \widetilde{m_\pm}(x,\tau)  \ \right\},
\end{equation}
where $\widetilde{m_\pm}(x,\tau)  = m_\pm(x,\tau) -1,$ $m_\pm$ are modified Jost functions, while $T, R_\pm$  are the transmission and reflection  coefficients.

It is simple to establish that the kernel $\varphi(\sqrt{\mathcal{H}}/M)(x,y)$ can be decomposed as follows (one can see \cite{GG16}):
\begin{lem}\label{Ber1}
	If  $\varphi$ is an even non-negative function, such that  $\varphi \in C^\infty _0({\mathbf R} \setminus \{0\}),$ then for any $M>0$
	we have
	\begin{equation}\label{eq.Ber2a}
	\varphi \left( \frac{\sqrt{\mathcal{H}}}{M} \right)(x,y) =  K_M^0(x,y)+\widetilde{K}_M(x,y) ,
	\end{equation}
	where $K_M^0(x,y)$ can be represented as sum of the terms
	\begin{equation}\label{eq.lt1}
	\mathds{1}_{\epsilon_1 x>0}\mathds{1}_{\epsilon_2 y>0}\mathcal{F}_{\varphi,M}(a)(\epsilon_3 x +\epsilon_4 y)
	\end{equation}
	and the term $\widetilde{K}_M(x,y)$ is represented as sum of the terms
	\begin{gather}\label{eq.Ber2}
	\mathds{1}_{\epsilon_1 x>0}\mathds{1}_{\epsilon_2 y>0}\mathcal{F}_{\varphi,M}(b_1(x,\cdot))(\epsilon_3 x +\epsilon_4 y)+\mathds{1}_{\epsilon_1 x>0}\mathds{1}_{\epsilon_2 y>0}\mathcal{F}_{\varphi,M}(b_2(y,\cdot))(\epsilon_3 x
	+\epsilon_4 y)+\\
	+\mathds{1}_{\epsilon_1 x>0}\mathds{1}_{\epsilon_2 y>0}\mathcal{F}_{\varphi,M}(b_3(x,\cdot)b_4(y,\cdot))(\epsilon_3 x +\epsilon_4 y),\nonumber
	\end{gather}
	where $\epsilon_i=\pm 1$, for $i=1,\dots,4,$ $a(\tau)$ represents a linear combination with constant coefficients of functions in the set $\mathcal{A}$ in \eqref{eq.DO3m} and $b_i$, for $i=1,\dots,4$, are linear combinations with constant coefficients of functions in the set $\mathcal{B}$ in  \eqref{eq.DO3}.
\end{lem}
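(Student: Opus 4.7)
\textbf{Proof plan for Lemma \ref{Ber1}.} The strategy is to start from the spectral resolution of $\mathcal{H}_{ac}$ in terms of the generalized eigenfunctions built from the Jost solutions $f_\pm(x,\tau)=e^{\pm {\rm i}\tau x}m_\pm(x,\tau)$, and then carefully re-express the integrand in each of the four quadrants $\{\epsilon_1 x>0\}\times\{\epsilon_2 y>0\}$ so that every phase is pulled outside as a pure plane wave $e^{-{\rm i}\tau(\epsilon_3 x+\epsilon_4 y)}$, while the remaining $\tau$-dependence sits either in a pure symbol $a(\tau)\in\mathcal{A}$ or in a product of modified Jost amplitudes $\widetilde{m_\pm}(x,\tau)$, $\widetilde{m_\pm}(y,\tau)$ multiplied by a symbol in $\mathcal{A}$.

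First, write
$$\varphi\!\left(\tfrac{\sqrt{\mathcal{H}_{ac}}}{M}\right)\!(x,y)=\frac{1}{2\pi}\int_0^\infty\varphi(\tau/M)\bigl[e_+(x,\tau)\overline{e_+(y,\tau)}+e_-(x,\tau)\overline{e_-(y,\tau)}\bigr]\,d\tau,$$
with $e_\pm(x,\tau)=T(\tau)f_\pm(x,\tau)$. Using the Wronskian/scattering identity $T(\tau)f_\mp(x,\tau)=f_\pm(x,-\tau)+R_\mp(\tau)f_\pm(x,\tau)$, I would rewrite each $e_\pm$ on the ``wrong'' half-line in terms of $f_\pm$ only, so that on $\{\epsilon x>0\}$ everything reduces to $e^{\pm {\rm i}\tau x}m_\epsilon(x,\pm\tau)$. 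The reality of $V$ gives the conjugation rule $\overline{m_\pm(x,\tau)}=m_\pm(x,-\tau)$, eliminating all complex conjugates; together with $|T|^2+|R_\mp|^2=1$ and $R_\pm(-\tau)=\overline{R_\pm(\tau)}$, the evenness of $\varphi(|\tau|/M)$ lets me extend the $\tau$-integral to all of $\mathds{R}$. On each quadrant the kernel then becomes a finite linear combination of integrals
$$\int_{\mathds{R}}\varphi(|\tau|/M)\,c(\tau)\,m_{\epsilon_1}(x,\sigma_1\tau)\,m_{\epsilon_2}(y,\sigma_2\tau)\,e^{-{\rm i}\tau(\epsilon_3 x+\epsilon_4 y)}\,d\tau,$$
with $c(\tau)\in\{1,T(\tau),R_\pm(\tau)\}$ and signs $\sigma_i,\epsilon_i\in\{\pm 1\}$.

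Next, I would plug $m_\pm=1+\widetilde{m_\pm}$ into each amplitude factor and expand
$$(1+\widetilde{m_{\epsilon_1}}(x,\cdot))(1+\widetilde{m_{\epsilon_2}}(y,\cdot))=1+\widetilde{m_{\epsilon_1}}(x,\cdot)+\widetilde{m_{\epsilon_2}}(y,\cdot)+\widetilde{m_{\epsilon_1}}(x,\cdot)\,\widetilde{m_{\epsilon_2}}(y,\cdot).$$
The $1\cdot 1$ contribution is exactly $\mathcal{F}_{\varphi,M}(a)(\epsilon_3 x+\epsilon_4 y)$ with $a\in\mathcal{A}$, and summing over the four quadrants gives the claimed $K_M^0$. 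The three remaining pieces depend only on $x$, only on $y$, or on both through a product, matching the three lines of \eqref{eq.Ber2} with $b_i\in\mathcal{B}$; this yields $\widetilde{K}_M$.

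The main obstacle is the bookkeeping of the four sign parameters and the systematic use of the scattering identities so that the $\tau$-integral lives on all of $\mathds{R}$ and every phase takes the clean form $e^{-{\rm i}\tau(\epsilon_3 x+\epsilon_4 y)}$. Once the algebraic identities above are in hand the expansion is routine, but one must verify patiently that every term lands in the form \eqref{eq.lt1}--\eqref{eq.Ber2} and that no coefficient $c(\tau)$ falls outside $\mathcal{A}$.
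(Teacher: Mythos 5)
Your plan is essentially the correct one, but note that the paper itself does not actually carry out this proof in the text: the decomposition is stated with the citation \cite{GG16}, and the environment labelled ``Proof of Lemma \ref{Ber1}'' in Section 5 in fact establishes the mapping estimates of Lemma \ref{Ber1a} rather than the kernel decomposition. Your route --- the generalized eigenfunction expansion with $e_\pm = T f_\pm$, conversion on each quadrant to the Jost solution that is well behaved there via \eqref{eq:kernel35}, extension of the $\tau$-integral to all of $\mathds{R}$ using the evenness of $\varphi$ and the reality of $V$, and the splitting $m_\pm = 1+\widetilde{m_\pm}$ --- is exactly the standard argument and reproduces the structure \eqref{eq.lt1}--\eqref{eq.Ber2} as well as the explicit symbol $\alpha(x,y,\tau)$ in \eqref{eq.lead}. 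One point you should make explicit: in the mixed quadrants ($x<0<y$ and $y<0<x$) the cancellation of the cross terms requires the \emph{off-diagonal} unitarity relation $T(\tau)\overline{R_+(\tau)}+R_-(\tau)\overline{T(\tau)}=0$ of the scattering matrix; the diagonal relation $|T|^2+|R_\pm|^2=1$, which is the only one you invoke (and the only one recorded in Lemma \ref{lem:TRcoeff}, item \eqref{eq:TRcoeff}), suffices only for the same-sign quadrants. This identity follows from the Wronskian relations for $f_\pm$ and must be quoted separately. With it in hand, every coefficient produced by your expansion is indeed a linear combination of elements of $\mathcal{A}$ in \eqref{eq.DO3m}, and every remainder factor lies in $\mathcal{B}$ in \eqref{eq.DO3}, so the bookkeeping closes as you describe.
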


\begin{rem} We shall call the term $K_M^0(x,y)$ the leading one, with the following exact representation
	\begin{equation}\label{eq.lead}
	K_M^0(x,y)= c\int_{\mathds{R}} e^{-{\rm i} \tau(x - y)}\varphi\left(\frac{\tau}{M}\right) \alpha(x,y,\tau)\,d\tau
	\end{equation}
	with symmetric kernel $ \alpha(x,y,\tau) = \alpha(y,x,\tau)$ and
	\begin{equation*}
	\alpha(x,y,\tau)=
	\begin{cases}
	T(\tau) &x<0<y,\\
	(R_+(\tau)+1) e^{2i\tau x}-e^{2i\tau x}+1 & 0<x<y,\\
	(R_-(\tau)+1)e^{-2i\tau y}-e^{-2i\tau y}+1 & x<y<0.
	\end{cases}
	\end{equation*}	
	
	The term $\widetilde{K}_M(x,y)$ will be called the remainder one. In Lemma \ref{Ber1} to simplify the notation we neglected the symbolism $a^\pm$, $b_i^{\pm}$.
\end{rem}

A priori estimates for the remainder term are obtained using the estimates of the filtered Fourier transform established in Lemma \ref{lft4} and Lemma \ref{lft5}.
\begin{lem}\label{Ber1a}
	Suppose the condition \eqref{V6} is fulfilled with $\gamma \geq 1 +s,$ $ s \in (0,1),$  the operator $\mathcal{H}$ has no point spectrum and $0$ is not a resonance point  for $\mathcal{H}.$
	If  $\varphi$ is an even non-negative function, such that  $\varphi \in C^\infty _0({\mathbf R} \setminus \{0\}),$ then for any $p \in (1,1/s),$ any $M \in (0,\infty)$ and for any $b^\pm(x,\tau)$, $b_1^\pm(x,\tau)$, $b^\pm_2(x,\tau)$ in the set  \eqref{eq.DO3}
	we have
	\begin{gather}\label{eq.DO5}
	\left\| \int_{\mathds{R}}  \mathds{1}_{\pm x>0}\mathcal{F}_{\varphi,M}(b^\pm (x,\cdot))(x \pm  y) f(y)dy \right\|_{L_x^p(\mathds{R})} + \\ \nonumber +  \left\| \int_{\mathds{R}}  \mathds{1}_{\pm y>0}\mathcal{F}_{\varphi,M}(b^\pm (y,\cdot))(x \pm  y) f(y)dy \right\|_{L_x^p(\mathds{R})} \leq \frac{C}{\langle M \rangle} \|f\|_{L^q(\mathds{R})},
	\end{gather}
	and
	\begin{equation}\label{eq.DO6}
	\left\| \int_{\mathds{R}}  \mathds{1}_{\pm x>0}\mathds{1}_{\pm y>0}\mathcal{F}_{\varphi,M}(b_1^\pm(x,\cdot)b_2^\pm(y,\cdot))(x \pm  y) f(y)dy \right\|_{L_x^{p}(\mathds{R})}  \leq \frac{C}{\langle M \rangle } \|f\|_{L^q(\mathds{R})},
	\end{equation}
	where $\  \frac{1}{q}=\frac{1}{p}- s.$
\end{lem}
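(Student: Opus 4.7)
The plan is to combine the pointwise estimates on the filtered Fourier transform furnished by Lemmas \ref{lft4} and \ref{lft5} with H\"older's inequality and a convolution-type estimate (Young or Hardy--Littlewood--Sobolev). By linearity and the triangle inequality, it suffices to treat \eqref{eq.DO5} and \eqref{eq.DO6} separately for each of the three elementary symbols $\widetilde{m_\pm}$, $T\,\widetilde{m_\pm}$, $R_\pm\,\widetilde{m_\pm}$ from the set $\mathcal{B}$. Two analytic ingredients drive the proof: the spatial decay of $\widetilde{m_\pm}(x,\tau)$ on the half-line $\pm x > 0$, which follows from the Volterra integral equation for the modified Jost functions together with the weighted bound $\|\langle x\rangle^{\gamma} V\|_{L^1} < \infty$; and the regularity of $T, R_\pm$ near $\tau = 0$, provided by the no-resonance assumption.

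For \eqref{eq.DO5}, the key structural observation is that for $x$ with $\pm x > 0$ the kernel depends on $y$ only through $z = x \pm y$. I would apply Lemma \ref{lft4} to obtain a separated pointwise bound of the form
\begin{equation*}
|\mathcal{F}_{\varphi,M}(b^\pm(x,\cdot))(z)| \leq \frac{C}{\langle M\rangle}\, \omega(x)\, h(z),
\end{equation*}
where the spatial weight $\omega$ lies in $L^{r_1}(\pm x > 0)$ for a suitable exponent $r_1$, and $h$ lies in a Lebesgue (or weak-$L^{r_3}$) class compatible with a convolution inequality. A change of variable $u = x \pm y$, followed by H\"older's inequality in $x$ and Young (or HLS) in $u$, yields
\begin{equation*}
\left\| \int \mathds{1}_{\pm x > 0}\, \mathcal{F}_{\varphi,M}(b^\pm(x,\cdot))(x\pm y)\, f(y)\, dy \right\|_{L^p_x(\mathds{R})} \leq \frac{C}{\langle M\rangle}\, \|\omega\|_{L^{r_1}}\, \|h\|_{L^{r_3}}\, \|f\|_{L^q(\mathds{R})},
\end{equation*}
provided the exponents satisfy $1/p = 1/r_1 + 1/r_2$ and $1 + 1/r_2 = 1/q + 1/r_3$. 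The constraint $1/p - 1/q = s$ then forces $1/r_1 + 1/r_3 = 1 + s$, and the hypothesis $\gamma \geq 1 + s$ is precisely what guarantees the required integrability of $\omega$ via the standard Jost-function bound.

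For \eqref{eq.DO6} the symbol $b_1^\pm(x,\tau)\, b_2^\pm(y,\tau)$ now couples $x$ and $y$ through the integration variable $\tau$. Here Lemma \ref{lft5} is designed to produce the analogous separated estimate
\begin{equation*}
|\mathcal{F}_{\varphi,M}(b_1^\pm(x,\cdot)\, b_2^\pm(y,\cdot))(z)| \leq \frac{C}{\langle M\rangle}\, \omega_1(x)\, \omega_2(y)\, \tilde h(z),
\end{equation*}
with the spatial weights now attached to each variable separately. The associated operator factorizes as multiplication by $\omega_2(y)$, a twisted convolution in the variable $x \pm y$ against $\tilde h$, and multiplication by $\omega_1(x)$; two applications of H\"older together with the same convolution inequality as above then close the estimate.

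The hard part will be producing the separated pointwise bounds with the uniform factor $1/\langle M\rangle$, especially the decoupling of the $x$- and $y$-dependence in the product case. The mechanism is integration by parts in $\tau$ on the compact annular support of $\varphi(\cdot/M)$, which trades smoothness of the symbol for spatial decay in $z$. At high frequency ($M \gg 1$) the $\sim 1/\tau$ decay of $\widetilde{m_\pm}$ coming from the Volterra equation supplies the $1/M$ gain, while at low frequency ($M \lesssim 1$) the no-resonance assumption ensures that $T, R_\pm$ and $\widetilde{m_\pm}(\cdot,\tau)$ remain smooth at $\tau = 0$ without any singular contribution. Balancing these two regimes to yield a single bound $1/\langle M\rangle$ is the delicate point, but it is accomplished once and for all in Lemmas \ref{lft4} and \ref{lft5}; the present lemma then reduces to the functional-analytic combination sketched above.
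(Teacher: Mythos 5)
Your proposal is correct and follows essentially the same route as the paper: the paper likewise takes the separated pointwise bounds from Lemmas \ref{lft4} and \ref{lft5} (with the $\xi$-factor in $L^1$ of norm $O(\langle M\rangle^{-1})$ and the spatial weights in $L^{1/s,\infty}\cap L^\infty$), and closes the estimate by H\"older in Lorentz spaces combined with Young's convolution inequality, exactly matching your exponent bookkeeping $1/r_1+1/r_3=1+s$ with $r_3=1$, $r_1=1/s$ (weak). The only cosmetic difference is that the paper encodes the $\langle M\rangle^{-1}$ gain in the $L^1$ norm of the convolution kernel rather than as a separate prefactor, and splits the cases $M\le 1$ and $M\ge 1$ explicitly.
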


According with the notation introduced in \eqref{eq.ES1}, we set

\begin{equation}\label{eq.ES2}
\pi_{\leq k}^{ac} = \sum_{j \leq k} \pi_j^{ac}, \ \ \pi^{ac}_{\geq k} = \sum_{j \geq k} \pi^{ac}_j.
\end{equation}
\begin{equation*}
f_k= \pi_k^{ac} f, \ \ f_{\leq k}= \sum_{j\leq k}\pi_j^{ac} f, \ \  f_{\geq k }= \sum_{j\geq k}\pi_j^{ac} f, \ \  f_{k_1,k_2}= \sum_{k_1\leq j\leq k_2 }\pi_j^{ac} f
\end{equation*}
and respectively $f^0_k$, $f_{\leq k}^0$, $f_{\geq k}^0$, $f_{ k_1,k_2}^0$ defined as before replacing $\pi_j^{ac}$ with $\pi_j^0$.

Hence, the decomposition \eqref{eq.Ber2a} can be rewritten as follows
\begin{equation*}
\pi_k^{ac}=I_k- (\pi_k^{ac}-I_k),
\end{equation*}
where the operator $I_k$ represents the operators involved in the leading kernel and $(\pi_k^{ac}-I_k)$ is the remainder term.

To prove Lemma \ref{KL1} we will establish the following inequalities:

	\begin{equation}\label{6.3}
	\left\| \left\| 2^{ks} \left( \pi_k^{ac} - I_k\right)  f \right\|_{\ell^2_k}   \right\|_{L^p_x(\mathds{R})} \leq C \| f\|_{L^q(\mathds{R})},
	\end{equation}
	\begin{equation}\label{6.3b}
	\left\| \left\| 2^{ks} \left(  I_k-\pi_k^0 \right)  f \right\|_{\ell^2_k}   \right\|_{L^p_x(\mathds{R})} \leq C \| f\|_{L^q(\mathds{R})},
	\end{equation}
	with $1/p=1/q+s$ and
	$I_k$ are the operators
	$$ I_k(f)(x) = \int_\mathds{R} K_{2^k}^0(x,y) f(y) dy $$
	with kernels representing
	the leading term \eqref{eq.lt1} in the expansion of Lemma \ref{Ber1} of $\pi_k.$

\section{Sup and H\"older type arpiori estimates}
\label{sec:spectral}

\subsection{Estimates for the modified Jost functions}

In this section we recall some classical results concerning the spectral decomposition of the perturbed Hamiltonian.
Recall that the  Jost functions are  solutions $f_{
	\pm } (x,\tau )=e^{\pm i\tau x}m_{
	\pm } (x,\tau )$ of $\mathcal{H }u=\tau ^2 u$ with
$$ \lim _{x\to +\infty }   {m_{ + } (x,\tau )}  =1 =
\lim _{x\to -\infty }  {m_{- } (x,\tau )}  . $$
We set  $x_+:=\max \{ 0,x \}$,   $x_-:=\max \{ 0,-x \}$.


The  estimate and  the asymptotic expansions of $m_\pm(x,\tau)$ are based on  the following integral equations
\begin{alignat}{2}\label{eq.Igra1n}
m_\pm(x,\tau) = 1+ K_{\pm}^{(\tau)} (m_{\pm}(\cdot, \tau))(x),
\end{alignat}
where
$ K_\pm^{(\tau)}$ is the integral  operator defined as follows
$$ K_\pm^{(\tau)}  (f)(x) =
\pm  \int_x^{\pm \infty} D(\pm(t-x),\tau) V(t) f(t) dt $$
and
\begin{equation}\label{eq.AE3}
D(t,\tau) = \frac{e^{2it\tau}-1}{2i\tau} = \int_0^t e^{2iy\tau} dy;
\end{equation}

The following lemma  is well known.
\begin{lem} (see Lemma 1 p. 130 \cite{DeiTru} and Lemma 2.1 in \cite{W})
	\label{lem:Jost} Assume  $V \in L^{1}_\gamma(\mathds{R})$, $ \gamma \in (1,2].$ Then we have the properties:
	\begin{enumerate}[noitemsep,label=\alph*)]
		\item for any $x \in \mathds{R}$ the function
		\begin{equation}\label{eq.Jo1}
		\tau \in \overline{\mathds{C}_\pm} \mapsto m_\pm (x , \tau),\ \ \mathds{C}_\pm = \{\tau \in \mathds{C}; {\rm Im} \tau \gtrless 0 \}
		\end{equation}
		is analytic in $\mathds{C}_\pm$ and $  C^{1} (\overline{\mathds{C}_\pm}  );$
		\item  there exist  constants $C_1$  and $C_2>0$
		such that for any $x, \tau \in \mathds{R}$:
		\begin{align}  \label{eq:kernel2n}
		&  \mathds{1}_{\pm x >0}|m_\pm(x, \tau )-1|\le  C _1
		\langle \tau \rangle ^{-1}  \ ;
		\\&   \label{eq:tderm}   \mathds{1}_{\pm x >0}|  \partial _\tau   m_\pm(x, \tau ) | \le  \frac{C_2}{|\tau|^{\gamma-2}\langle \tau \rangle^{\gamma-1}} . \end{align}
	\end{enumerate}
\end{lem}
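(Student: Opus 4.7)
The plan is to work directly with the Volterra integral equation \eqref{eq.Igra1n} and expand $m_\pm(x,\tau)$ as a Neumann series $m_\pm = \sum_{n \ge 0} \psi_n$, where $\psi_0 \equiv 1$ and $\psi_{n+1} = K_\pm^{(\tau)}\psi_n$. The workhorse is the elementary pointwise bound on the kernel arising from $D(t,\tau) = \int_0^t e^{2iy\tau}\,dy$: for $\tau\in\overline{\mathds{C}_\pm}$ and $\pm t\ge 0$ the oscillatory factor satisfies $|e^{2iy\tau}|\le 1$, whence
$$
|D(t,\tau)| \le \min\!\bigl(|t|,\, 1/|\tau|\bigr) \le \frac{C\langle t\rangle}{\langle\tau\rangle}.
$$
Consequently, for the indicator set $\pm x>0$ (so that $\pm t\ge \pm x\ge 0$ in the integration range), the kernel of $K_\pm^{(\tau)}$ is dominated by $h(t,\tau) := C\langle t\rangle |V(t)|/\langle\tau\rangle$, which depends only on $t$ and $\tau$.

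For part (a), the key mechanism is the factorial decay inherent to Volterra operators: ordering $t_1\le t_2 \le \cdots \le t_n$ (in the $+$ case) gives
$$
|\psi_n(x,\tau)| \;\le\; \frac{1}{n!}\!\left(\int_{\pm x}^{\pm\infty} h(t,\tau)\,dt\right)^{\!n} \;\le\; \frac{1}{n!}\!\left(\frac{C\,\|V\|_{L^1_1}}{\langle\tau\rangle}\right)^{\!n},
$$
so the Neumann series converges absolutely, uniformly on compacta in $\tau\in\overline{\mathds{C}_\pm}$. Since each $D(\pm(t-x),\tau)$ is entire in $\tau$, each $\psi_n(x,\cdot)$ is analytic in $\mathds{C}_\pm$ and continuous up to $\mathds{R}$ by dominated convergence, and these properties pass to the limit $m_\pm$. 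The $C^1$ claim on the closed half-plane follows a posteriori once the derivative bound in (b) is established, since it legitimates termwise differentiation of the Neumann series.

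For part (b), the first inequality is immediate: summing the Neumann bound above yields $|m_\pm(x,\tau)-1| \le \sum_{n\ge 1} |\psi_n| \le \frac{C}{\langle\tau\rangle}\exp(C\|V\|_{L^1_1}/\langle\tau\rangle) \le \frac{C'}{\langle\tau\rangle}$. For the derivative bound I would differentiate the integral equation in $\tau$,
$$
\partial_\tau m_\pm \;=\; \bigl(\partial_\tau K_\pm^{(\tau)}\bigr)(m_\pm) \;+\; K_\pm^{(\tau)}(\partial_\tau m_\pm),
$$
and solve this linear Volterra equation for $\partial_\tau m_\pm$ by a second Neumann iteration (the operator $K_\pm^{(\tau)}$ is already shown to be a contraction on $L^\infty_x$ with small norm). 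The inhomogeneous term is controlled by the bound
$$
|\partial_\tau D(t,\tau)| \;\le\; C\min\!\bigl(t^2,\, \langle t\rangle/|\tau|\bigr),
$$
obtained either from $\partial_\tau D(t,\tau) = \int_0^t 2iy\,e^{2iy\tau}dy$ (giving the $t^2$ bound) or from $\partial_\tau D = (te^{2it\tau} - D)/\tau$ (giving the $\langle t\rangle/|\tau|$ bound). Interpolating between these two bounds and pairing the $t^{2-\alpha}|\tau|^{-\alpha}$ weight against $V\in L^1_\gamma$, with $\alpha$ chosen as a function of $\gamma$, produces exactly the stated $\tau$-dependence.

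The main obstacle is matching the exact exponents in the derivative estimate near $\tau = 0$: at the origin $\partial_\tau D(t,0) = it^2$, so the naive bound would require $V\in L^1_2$. For $\gamma\in(1,2)$ one does not have two moments of $V$, and the right weight must be extracted by carefully interpolating $\min(t^2, \langle t\rangle/|\tau|)$ against the $\langle t\rangle^\gamma|V(t)|$ weight, trading the missing moment for the explicit singular factor in $|\tau|$. Everything else --- analyticity, continuity, boundedness of the first derivative at high frequency --- is a routine consequence of the Volterra contraction mechanism and the already-established $\langle\tau\rangle^{-1}$ decay of $m_\pm - 1$.
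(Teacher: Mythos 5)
The paper gives no proof of this lemma at all --- it is quoted verbatim from Deift--Trubowitz and Weder --- and your Volterra/Neumann-series argument (factorial gain from time-ordering, the bound $|D(t,\tau)|\le\min(|t|,1/|\tau|)$, differentiation of the integral equation and interpolation of $|\partial_\tau D|\le C\min(t^2,\langle t\rangle/|\tau|)$ against the $\gamma$ moments of $V$) is precisely the standard proof found in those references, so your proposal is correct and matches the intended approach. Two small remarks: your interpolation with $\theta=\gamma-1$ yields $|\partial_\tau m_\pm|\lesssim |\tau|^{\gamma-2}\langle\tau\rangle^{1-\gamma}$, which is the correct estimate; the display \eqref{eq:tderm} as printed reads literally as $|\tau|^{2-\gamma}\langle\tau\rangle^{1-\gamma}$, which would vanish at $\tau=0$ and is evidently a sign slip in the exponent, so you have reproduced the intended bound. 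Also, the Neumann series for the differentiated equation converges not because $K_\pm^{(\tau)}$ is a small-norm contraction (its operator norm is only $O(\|V\|_{L^1_1}/\langle\tau\rangle)$, not small) but because of the same Volterra factorial gain you already invoked in part (a); this is a harmless imprecision, not a gap.
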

A slight improvement is given in the next Lemma.
\begin{lem}\label{lem:Jostk0} ( see \cite{GG16})
	Suppose $V\in L^1_\gamma(\mathds{R})$ with $\gamma\geq 1$. Then we have the following properties:
	\begin{itemize}
		\item[a)] There exists a constant $C>0$ such that for any $x\in \mathds{R}$, $\tau\in \overline{\mathbb{C}_\pm}$, we have
		\begin{equation}\label{eq.1k0}
		\left|m_\pm(x,\tau)-1\right|\leq C\frac{\langle x_\mp\rangle}{\langle x_\pm\rangle^{\gamma-1}};
		\end{equation}
		\item[b)]There exists a constant $C>0$ such that for any $x\in \mathds{R}$, $\tau\in \overline{\mathbb{C}_\pm}\smallsetminus\{0\}$, we have
		\begin{equation}\label{eq.2k0}
		\left|m_\pm(x,\tau)-1\right|\leq C\frac{\langle x_\mp\rangle}{\langle x_\pm\rangle^{\gamma}|\tau|};
		\end{equation}
		\item[c)]Let $\sigma\in[0,1)$. Then there exists a constant $C>0$ such that for any $x\in \mathds{R}$ we have
		\begin{equation}\label{eq.3k0}
		\left\|m_\pm(x,\tau)-1\right\|_{C^{0,\sigma}(\mathbb{C}_\pm)}\leq C\frac{\langle x_\mp\rangle^{1+\sigma}}{\langle x_\pm\rangle^{\gamma-1-\sigma}}, \ \gamma>1,\  0\leq\sigma\leq \gamma-1;
		\end{equation}
		\item[d)]Let $\sigma\in[0,1)$. Then there exists a constant $C>0$ such that for any $x\in \mathds{R}$ we have
		\begin{equation}\label{eq.4k0}
		\left\|\tau (m_\pm(x,\tau)-1)\right\|_{C^{0,\sigma}(\mathbb{C}_\pm)}\leq C\frac{\langle x_\mp\rangle^{1+\sigma}}{\langle x_\pm\rangle^{\gamma-\sigma}}, \ \gamma>1.
		\end{equation}	
	\end{itemize}
\end{lem}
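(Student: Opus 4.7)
The plan is to derive all four estimates from the Volterra integral equation \eqref{eq.Igra1n} combined with the explicit formula \eqref{eq.AE3} for $D(s,\tau)$. First I would record the elementary bounds on this Fourier multiplier that drive the entire argument: for $\tau\in\overline{\mathds{C}_\pm}\setminus\{0\}$ and $\pm s\geq 0$ one has $|D(s,\tau)|\leq \min(|s|,|\tau|^{-1})$, $|\partial_\tau D(s,\tau)|\leq s^2$, $|\tau D(s,\tau)|\leq 1$ and $|\partial_\tau(\tau D(s,\tau))|=|se^{2is\tau}|\leq |s|$. Together with the uniform bound $|m_\pm(x,\tau)|\leq C$ extracted from Lemma \ref{lem:Jost}, these inequalities feed straight into \eqref{eq.Igra1n}.

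For (a) I would use $|D(s,\tau)|\leq |s|$, which yields $|m_+(x,\tau)-1|\leq C\int_x^{\infty}(t-x)|V(t)|\,dt$. When $x\geq 0$, the inequality $(t-x)/\langle t\rangle^{\gamma}\leq \langle x\rangle^{-(\gamma-1)}$ on $\{t\geq x\}$ paired against $V\in L^1_\gamma$ produces the factor $\langle x_+\rangle^{-(\gamma-1)}$, and $\langle x_-\rangle=1$ matches the claim. When $x<0$, the bound $(t-x)\leq \langle t\rangle+\langle x_-\rangle$ together with $V\in L^1\cap L^1_1$ gives $\leq C\langle x_-\rangle$, which matches since $\langle x_+\rangle=1$. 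The case of $m_-$ is symmetric. Part (b) is identical once $|D|\leq |s|$ is replaced by $|D|\leq 1/|\tau|$: the same splitting produces $\langle x_\pm\rangle^{-\gamma}/|\tau|$ when $\pm x>0$ and $1/|\tau|\leq \langle x_\mp\rangle/|\tau|$ otherwise.

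For (c) and (d) I would subtract \eqref{eq.Igra1n} at two points $\tau_1,\tau_2\in\overline{\mathds{C}_\pm}\setminus\{0\}$. Setting $\mu_\pm=m_\pm-1$ and absorbing the outer sign, this gives schematically
\begin{equation*}
\mu_\pm(x,\tau_1)-\mu_\pm(x,\tau_2) = \int \bigl[D(\cdot,\tau_1)-D(\cdot,\tau_2)\bigr]V\,m_\pm(\cdot,\tau_1) + \int D(\cdot,\tau_2)\,V\,\bigl[\mu_\pm(\cdot,\tau_1)-\mu_\pm(\cdot,\tau_2)\bigr],
\end{equation*}
with an identical identity for $\tau\mu_\pm$ in which $\tau D$ replaces $D$. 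Interpolating between $|D(s,\tau_1)-D(s,\tau_2)|\leq 2|s|$ and $\leq s^2|\tau_1-\tau_2|$ yields $|D(s,\tau_1)-D(s,\tau_2)|\leq C|s|^{1+\sigma}|\tau_1-\tau_2|^{\sigma}$ for $\sigma\in[0,1]$, while the companion interpolation on $\tau D$ gives $|\tau_1 D(s,\tau_1)-\tau_2 D(s,\tau_2)|\leq C|s|^{\sigma}|\tau_1-\tau_2|^{\sigma}$. Inserting these into the first integral on the right and repeating the splitting from (a), respectively (b), with $1+\sigma$, respectively $\sigma$, in place of the original power of $(t-x)$ produces exactly the weights $\langle x_\mp\rangle^{1+\sigma}/\langle x_\pm\rangle^{\gamma-1-\sigma}$ demanded by (c) and $\langle x_\mp\rangle^{1+\sigma}/\langle x_\pm\rangle^{\gamma-\sigma}$ demanded by (d); the hypothesis $\sigma\leq\gamma-1$ is precisely what ensures the decay exponents are non-negative and that $V\in L^1_{1+\sigma}$ is available for the $x<0$ case.

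The main obstacle is the second integral on the right, which contains the H\"older difference of $\mu_\pm$ itself and thus threatens a circular estimate. I would close the loop by the classical Volterra contraction device: partition the half-line $[x,\pm\infty)$ into finitely many sub-intervals on each of which $\int|V|$ is small enough that the coefficient produced by the second integral is at most $1/2$, and then iterate inward starting from the interval nearest $\pm\infty$, absorbing the second term at each step and propagating the bound from the first. Equivalently, one may expand $\mu_\pm$ as the norm-convergent Neumann series $\sum_{k\geq 1}(K_\pm^{(\tau)})^k(1)$ and estimate each summand by the same interpolation, obtaining an $\ell^1$-summable series in $k$. This bootstrap promotes the pointwise estimates (a), (b) to the H\"older estimates (c), (d) with identical spatial weights.
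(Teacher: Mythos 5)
The paper does not actually prove this lemma: it is imported verbatim from \cite{GG16}, so there is no in-text argument to compare against. Your route — the Volterra equation \eqref{eq.Igra1n} plus the elementary bounds $|D(s,\tau)|\leq\min(|s|,|\tau|^{-1})$, $|\partial_\tau D|\leq s^2$, $|\tau D|\leq 1$, $|\partial_\tau(\tau D)|\leq|s|$, the interpolated H\"older moduli $|D(s,\tau_1)-D(s,\tau_2)|\leq C|s|^{1+\sigma}|\tau_1-\tau_2|^{\sigma}$ and $|\tau_1D(s,\tau_1)-\tau_2D(s,\tau_2)|\leq C|s|^{\sigma}|\tau_1-\tau_2|^{\sigma}$, and a Neumann/Gronwall closure — is exactly the standard Deift--Trubowitz scheme that the cited source follows, and your exponent bookkeeping (why $\sigma\leq\gamma-1$ is needed in (c) but only $\sigma\leq\gamma$ in (d)) is right. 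One step needs more care than you give it. The ``uniform bound $|m_\pm(x,\tau)|\leq C$'' is \emph{not} supplied by Lemma \ref{lem:Jost} on the half-line $\pm x<0$; there $m_\pm$ genuinely grows like $\langle x_\mp\rangle$, and this growth feeds back into the homogeneous part of the Volterra equation. If you majorize the Neumann series naively by $\bigl(\int_x^{\pm\infty}|t-x|\,|V(t)|\,dt\bigr)^k/k!$ you obtain $e^{C\langle x_\mp\rangle}$, exponentially worse than the linear weight claimed in (a). The standard repair, which you should make explicit, is to run the contraction/Gronwall step on the weighted unknown $(m_\pm(x,\tau)-1)/\langle x_\mp\rangle^{1+\sigma}$, using the elementary inequality $(t-x)^{1+\sigma}\langle t_\mp\rangle\leq C\langle t\rangle^{1+\sigma}\langle x_\mp\rangle^{1+\sigma}$ for $\pm(t-x)\geq0$, so that the iteration kernel is dominated by $C\langle t\rangle^{1+\sigma}|V(t)|$, integrable precisely because $1+\sigma\leq\gamma$. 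With that adjustment (and the analogous one when you insert $m_\pm(\cdot,\tau_1)$ into the first integral of your difference identity for $x$ on the ``bad'' side), your proof closes and delivers all four estimates.
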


\subsection{Estimates  for transmision and reflection coefficients}

The transmission coefficient
$T(\tau )$ and the reflection  coefficients
$R_\pm (\tau )$   are defined by  the formula
\begin{equation}  \label{eq:kernel35} \begin{aligned} &    T(\tau )m_\mp (x ,\tau )= R_\pm (\tau )e^{\pm 2{\rm i} \tau x }m_\pm (x,\tau )+   m_\pm (x,-\tau ).
\end{aligned}
\end{equation}
From \cite{DeiTru}  and from \cite{W} we have the following lemma.
\begin{lem}
	\label{lem:TRcoeff} We have the following properties of the transmissions and reflection coefficients.
	\begin{enumerate}[noitemsep,label=\alph*)]
		\item $T, R_\pm   \in C (\mathds{R}   )$.
		\item There exists $C_1,C_2>0$  such that:
		\begin{align} &  \label{eq:TRcoeff0}
		|T(  \tau )-1| +| R_\pm (  \tau )  |\le C_1 \langle \tau  \rangle ^{ -1} \\&  
		\label{eq:TRcoeff}
		|T(\tau )|^2+ |R_{\pm }(\tau )|^2=1.
		\end{align}
		\item If $T(0)=0,$ (i.e. zero is not a resonance point), then for some $\alpha \in \mathds{C} \setminus \{0\}$ and for some
		$\alpha_+, \alpha_- \in \mathds{C}$
		\begin{alignat}{2}\label{eq.TRa}
		& T(\tau) = \alpha \tau + o(\tau), \ \ 1+R_\pm(\tau) = \alpha_\pm \tau + o(\tau) \ \ \mbox{as $\tau \to 0$}, \\ \nonumber
		T(\tau) = & 1 + O(|\tau|^{-1}), \ \ R_\pm(\tau) = O(|\tau|^{-1}) \ \ \mbox{as $\tau \to \infty$}.
		\end{alignat}
		\item  there exists a  constant $C>0$  such that for any $\tau \in \mathds{R}$:
		\begin{align}  \label{eq:Tra1}
		&  T^\prime(\tau) \le  C \langle \tau \rangle ^{-1}  .
		\end{align}
	\end{enumerate}
\end{lem}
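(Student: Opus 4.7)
The strategy is to express the transmission and reflection coefficients as Wronskian ratios of the Jost solutions $f_\pm(x,\tau)=e^{\pm i\tau x}m_\pm(x,\tau)$ and then read each of the four items off from the pointwise and Hölder estimates on $m_\pm$ supplied by Lemma~\ref{lem:Jost} and Lemma~\ref{lem:Jostk0}. Rewriting \eqref{eq:kernel35} in terms of $f_\pm$ gives the equivalent scattering relation
\[
T(\tau)\,f_\mp(x,\tau) \;=\; f_\pm(x,-\tau) + R_\pm(\tau)\,f_\pm(x,\tau),
\]
and pairing this with $f_\pm(\cdot,\tau)$ or $f_\mp(\cdot,\tau)$, together with the $x$-independence of the Wronskian of two solutions of the same eigenvalue equation, produces
\[
T(\tau) \;=\; \frac{-2i\tau}{W(\tau)}, \qquad R_\pm(\tau) \;=\; \mp\frac{W\!\left(f_\pm(\cdot,-\tau),\,f_\mp(\cdot,\tau)\right)}{W(\tau)},
\]
where $W(\tau):=W(f_+(\cdot,\tau),f_-(\cdot,\tau))$. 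The absence of positive eigenvalues gives $W(\tau)\neq 0$ on $\mathds{R}\setminus\{0\}$, while the $\tau$-continuity of $m_\pm$ on $\overline{\mathds{C}_\pm}$ from Lemma~\ref{lem:Jost} immediately yields continuity of $T$ and $R_\pm$, i.e.\ item (a).

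For item (b), I would expand $m_\pm=1+\widetilde m_\pm$ with $|\widetilde m_\pm|=O(|\tau|^{-1})$ from \eqref{eq:kernel2n} to obtain $W(\tau)=-2i\tau+O(1)$ and a numerator $O(1)$ for $R_\pm$ as $|\tau|\to\infty$; combined with the continuity on bounded intervals this produces \eqref{eq:TRcoeff0}. The unitarity $|T|^2+|R_\pm|^2=1$ is a classical Wronskian identity: since $V$ is real one has $\overline{f_+(x,\tau)}=f_+(x,-\tau)$, and computing $W(f_+(\cdot,\tau),f_+(\cdot,-\tau))$ both directly near $+\infty$ (where the two functions become plane waves and the Wronskian reduces to $-2i\tau$) and via the scattering decomposition near $-\infty$ produces the identity after factoring out $|T(\tau)|^2$.

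For item (c), the hypothesis $T(0)=0$ is equivalent, through $T=-2i\tau/W$, to $W(0)\neq 0$; hence $T$ has a simple zero at the origin with $T'(0)=-2i/W(0)=:\alpha\neq 0$. Setting $\tau=0$ in \eqref{eq:kernel35} and using $T(0)=0$ together with $m_\pm(\cdot,0)\not\equiv 0$ (a consequence of $W(0)\neq 0$) forces $R_\pm(0)=-1$, and differentiating the Wronskian representation of $R_\pm$ at $\tau=0$ — justified by the Hölder control \eqref{eq.3k0} of $m_\pm$ and the derivative bound \eqref{eq:tderm} — supplies the linear expansion $1+R_\pm(\tau)=\alpha_\pm\tau+o(\tau)$. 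The second line of \eqref{eq.TRa} is just a restatement of item (b).

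Finally, item (d) follows by differentiating $T=-2i\tau/W$, giving $T'(\tau)=-2i/W(\tau)+2i\tau\,W'(\tau)/W(\tau)^2$: the expansion $W(\tau)=-2i\tau+O(1)$ and the boundedness of $W'(\tau)$ at infinity (which comes from the $\partial_\tau$ estimate \eqref{eq:tderm} applied to the representation of $W$ in terms of $m_\pm$) yield $|T'(\tau)|\le C\langle\tau\rangle^{-1}$ for $|\tau|$ large, while continuity of $T'$ controls the bounded range. The main delicate point I expect is the rigorous small-$\tau$ analysis in item (c): making the differentiation of the scattering identity at $\tau=0$ legitimate requires uniform Hölder control of $m_\pm$ near the origin, and this is precisely where the short-range hypothesis $\gamma>1$ on $V$ becomes indispensable.
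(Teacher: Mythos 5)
The paper gives no proof of this lemma at all --- it is quoted directly from Deift--Trubowitz and Weder --- and your Wronskian reconstruction ($T=-2i\tau/W$, $R_\pm$ as a ratio of Wronskians, unitarity from $\overline{f_+(x,\tau)}=f_+(x,-\tau)$) is exactly the classical argument of those references, consistent with the integral identities \eqref{eq.63a}--\eqref{eq.TRC7} that the paper itself invokes later in the proof of Lemma \ref{lem:TCH1}. The one thin spot is item (d) at large $|\tau|$: the unweighted bound \eqref{eq:tderm} gives $|\partial_\tau m_\pm|\lesssim|\tau|^{3-2\gamma}$ there, which is not bounded for $\gamma<3/2$, so the boundedness of $W'(\tau)$ (equivalently of $\int V\,\partial_\tau m_\pm\,dt$) really requires the $x$-weighted derivative estimates on $m_\pm$ from the cited sources rather than \eqref{eq:tderm} alone.
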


The property  c) in the last Lemma suggests the following.
\begin{defn} \label{dres} The origin is a resonance point for the hamiltonian $\mathcal{H}$ if and only if
	$$T(0) \neq 0.$$
\end{defn}

Therefore, taking a bump function $\varphi \in C_0^\infty((0,\infty))$ (with support in  $[1/2,2]$ for example), we have estimates in the the algebra $C([0,4]) $
of the terms of type
\begin{equation}\label{eq.mas101}
\left\| \varphi(\cdot) T(M\cdot ) \right\|_{C^{0}([0,4])}+ \left\| \varphi(\cdot) \left( R_\pm(M\cdot)+1 \right) \right\|_{C^{0}([0,4])} \leq C M^{}
\end{equation}
and
\begin{equation}\label{eq.mas102}
\left\| \frac{\varphi(\cdot)}{ T(M\cdot )} \right\|_{C^{0}([0,4])}+ \left\| \frac{\varphi(\cdot)}{ \left( R_\pm(M\cdot)+1 \right)} \right\|_{C^{0}([0,4])} \leq C M^{-1}
\end{equation}
for $M \in (0,1]$.

We can use the  assumption $V \in L^1_\gamma(\mathds{R}),$ $\gamma > 1,$ to get some more precise H\"older type bounds.

\begin{lem}
	\label{lem:TCH1} Suppose $V \in L^1_\gamma(\mathds{R})$ with $\gamma > 1$  and $T(0)=0.$ Then  for  any
	$\sigma \in (0,s]$ and $M \in (0,1]$ we have:
	\begin{enumerate}[noitemsep,label=\alph*)]
		\item $T, R_\pm   \in C^{0,\sigma} (\mathds{R}   );$
		\item for $M \in (0,1)$  we have
		\begin{equation}\label{eq.mas1}
		\left\| \varphi(\cdot) T(M\cdot ) \right\|_{C^{0,\sigma}((0,+\infty))}+ \left\|  \varphi(\cdot) \left( R_\pm(M\cdot)+1 \right) \right\|_{C^{0,\sigma}((1/2,2))} \leq C M^{};
		\end{equation}
		\item for $M \in [1,\infty)$  we have
		\begin{equation}\label{eq.mas2}
		\left\| \varphi(\cdot) \left( T(M\cdot )-1 \right) \right\|_{C^{0,\sigma}((0,+\infty))}+ \left\|  \varphi(\cdot) R_\pm(M\cdot) \right\|_{C^{0,\sigma}((1/2,2))} \leq C M^{-1}.
		\end{equation}
	\end{enumerate}
\end{lem}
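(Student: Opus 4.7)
The plan is to derive the claimed H\"older estimates from explicit integral representations of $T$ and $R_\pm$ in terms of the modified Jost functions, combined with the pointwise H\"older bounds of Lemma \ref{lem:Jostk0}. Taking limits in the integral equation \eqref{eq.Igra1n} for $m_+$ as $x\to-\infty$ and matching with the asymptotic behavior defining $T$ and $R_-$ yields
$$T(\tau) = \frac{2i\tau}{\Delta(\tau)}, \qquad R_-(\tau) = \frac{B(\tau)}{\Delta(\tau)}, \qquad \Delta(\tau) := 2i\tau - A(\tau),$$
where $A(\tau) = \int V(t) m_+(t,\tau)\,dt$ and $B(\tau) = \int e^{2i\tau t} V(t) m_+(t,\tau)\,dt$, with an analogous formula for $R_+$ via $m_-$. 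The no-resonance hypothesis $T(0)=0$ is equivalent to $A(0)\neq 0$, which, together with \eqref{eq:TRcoeff0} and the unitarity \eqref{eq:TRcoeff}, ensures $|\Delta(\tau)| \gtrsim \langle\tau\rangle$ uniformly on $\mathds{R}$.

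For part (a), using the pointwise H\"older bound \eqref{eq.3k0} one shows that for $\sigma\in(0,s]$,
$$|A(\tau) - A(\tau')| \le C |\tau-\tau'|^\sigma \int|V(t)| \frac{\langle t_-\rangle^{1+\sigma}}{\langle t_+\rangle^{\gamma-1-\sigma}}\,dt,$$
the integral being finite precisely because $1+\sigma \le \gamma$ and $V \in L^1_\gamma$. A parallel argument for $B$, taking into account the oscillating factor through $|e^{2i\tau t}-e^{2i\tau' t}|\le C|\tau-\tau'|^\sigma|t|^\sigma$, yields $B\in C^{0,\sigma}(\mathds{R})$. H\"older regularity of $T$ and $R_\pm$ follows since they are ratios with a H\"older-regular, non-vanishing denominator.

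For part (b), on the support of $\varphi$ the scaled argument $M\xi$ sits near the origin where $\Delta(M\xi)=-A(0)+O(M^\sigma)$ is bounded below. The identity
$$T(M\xi) - T(M\xi') = \frac{2iM[\xi' A(M\xi) - \xi A(M\xi')]}{\Delta(M\xi)\Delta(M\xi')},$$
together with the decomposition $\xi'A(M\xi)-\xi A(M\xi')=(\xi'-\xi)A(M\xi)+\xi(A(M\xi)-A(M\xi'))$ and the H\"older bound on $A$, yields the $C^{0,\sigma}$ bound $\le CM$ on $T(M\cdot)$. For $R_\pm+1$ one writes $R_\pm(M\xi)+1 = N_\pm(M\xi)/\Delta(M\xi)$ with
$$N_\pm(\tau) = 2i\tau + \int V(t)\bigl(e^{\mp 2i\tau t}-1\bigr) m_\mp(t,\tau)\,dt$$
satisfying $N_\pm(0)=0$. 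The decisive step is to factor out $\tau$ and show that $N_\pm(\tau)/\tau$ is itself H\"older of order $\sigma$ near the origin: after the factoring, the kernel $(e^{\mp 2i\tau t}-1)/\tau$ is uniformly bounded by $2|t|$, the resulting integral is absolutely convergent under the weighted $L^1$ hypothesis on $V$, and its H\"older regularity in $\tau$ is inherited from \eqref{eq.3k0}. Writing $R_\pm(M\xi)+1 = M\xi\cdot[N_\pm(M\xi)/(M\xi)]/\Delta(M\xi)$ then gives the $CM$ scaling.

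For part (c), the scaled frequency $M\xi$ is large, $|\Delta(M\xi)|\sim M$, and the $L^\infty$ part is immediate from \eqref{eq:TRcoeff0}. The H\"older seminorm of $T(M\cdot)-1$ and $R_\pm(M\cdot)$ is handled via the refined bound \eqref{eq.4k0} applied to $\tau(m_\pm-1)$: writing $m_\pm - 1 = \tau^{-1}\cdot\tau(m_\pm - 1)$ inside $A$ and $B$ and invoking H\"older regularity of $\tau(m_\pm-1)$ gains an extra $|\tau|^{-1}$ factor in the H\"older quotient, which at $\tau\sim M$ combines with the $M^2$ from $\Delta(M\xi)\Delta(M\xi')$ to give exactly $CM^{-1}$. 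The main obstacle is precisely this refined H\"older scaling: plain H\"older regularity of $T$ and $R_\pm$ on $\mathds{R}$ only yields $M^\sigma$ (resp.\ $M^{-\sigma}$) in the two regimes, and to upgrade to the uniform powers $M$ and $M^{-1}$ one must carefully exploit the asymptotic expansion \eqref{eq.TRa} at the origin together with the full $\langle t\rangle^\gamma$ weight of $V$ inside the representation formulas, rather than only the H\"older regularity on the real line.
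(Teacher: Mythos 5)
Your starting point coincides with the paper's: both arguments rest on the representation formulas \eqref{eq.63a} and \eqref{eq.TRC7} together with the weighted H\"older bounds for $m_\pm-1$ from Lemma \ref{lem:Jostk0}. Where you differ is purely in how the quotient structure is handled: the paper first proves the bounds \eqref{eq.mas103} for the \emph{reciprocals} $\varphi/T(M\cdot)$ and $\varphi/(R_\pm(M\cdot)+1)$ and then passes to \eqref{eq.mas1}--\eqref{eq.mas2} via a norm-controlled inversion inequality in the algebra $C^{0,\sigma}$, whereas you estimate the H\"older quotients of $T(M\cdot)$ and $R_\pm(M\cdot)+1$ directly from $T=2i\tau/\Delta(\tau)$, using the lower bound $|\Delta(\tau)|\gtrsim\langle\tau\rangle$. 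This is an acceptable and essentially equivalent variant; your treatment of part (c) via \eqref{eq.4k0} (writing $m_\pm-1=\tau^{-1}\cdot\tau(m_\pm-1)$ to gain the extra $M^{-1}$) is correct and is exactly the mechanism the paper relies on.

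There is, however, one concrete gap, in part (b) for $R_\pm+1$. You write $R_\pm+1=N_\pm/\Delta$ with $N_\pm(\tau)=2i\tau+\int V(t)\bigl(e^{\mp2i\tau t}-1\bigr)m_\mp(t,\tau)\,dt$ and claim that after factoring out $\tau$ the kernel is bounded by $2|t|$ and ``the resulting integral is absolutely convergent under the weighted $L^1$ hypothesis on $V$.'' This is not true on the half-line where $m_\mp$ grows: by \eqref{eq.1k0}, $|m_\mp(t,\tau)|\lesssim\langle t\rangle$ for $\pm t>0$, so the factored integrand is of size $|V(t)|\,|t|\,\langle t\rangle$, whose integrability requires $V\in L^1_2$, i.e.\ $\gamma\geq 2$. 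Under the standing hypothesis $\gamma\in(1,2)$ the elementary interpolation $|e^{\mp2i\tau t}-1|\leq 2\min(1,|\tau t|)\leq 2|\tau t|^{\gamma-1}$ on that half-line only yields $|N_\pm(\tau)-2i\tau-O(\tau)|\lesssim|\tau|^{\gamma-1}$, hence the bound $CM^{s}$ rather than the claimed $CM$ for $\|\varphi(R_\pm(M\cdot)+1)\|_{C^{0}}$. To close this you must invoke the sharper zero-energy expansion \eqref{eq.TRa} of Lemma \ref{lem:TRcoeff}(c) (which the paper imports from Deift--Trubowitz and Weder, and which is precisely what underlies \eqref{eq.mas103}), or otherwise exploit cancellations in $N_\pm$ beyond the crude pointwise bound on $m_\mp$; the absolute-convergence claim as you state it does not deliver the power $M^{1}$.
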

\begin{proof}
	
	The proof is based on the  relations
	\begin{equation}\label{eq.63a}
	\frac{\tau}{T(\tau)} = \tau - \frac{1}{2i} \int_\mathds{R} V(t) m_+(t,\tau) dt, \ \  \tau \in  \mathds{R }\setminus \{0\},
	\end{equation}
	\begin{equation}\label{eq.TRC7}
	R_\pm (\tau) = \frac{T(\tau)}{2i\tau} \int_\mathds{R} e^{\mp 2it\tau} V(t) m_\mp(t,\tau) dt , \  \tau \in  \mathds{R} \setminus \{0\}
	\end{equation}
	and the properties of the functions $m_\mp(t,\tau)$ from Lemma \ref{lem:Jostk0}. Indeed, we can get the estimates
	\begin{equation}\label{eq.mas103}
	\left\| \frac{\varphi(\cdot)}{ T(M\cdot )} \right\|_{C^{0,\sigma}([0,4])}+ \left\| \frac{\varphi(\cdot)}{ \left( R_\pm(M\cdot)+1 \right)} \right\|_{C^{0,\sigma}([0,4])} \leq C M^{-1}
	\end{equation}
	first. Further,  we can use the fact\footnote{the problem to have norm-controlled inversion in smooth Banach algebra is well-known and some more general results and references can be found in \cite{GK13}  } that we can control the norm of the inverse of $ f$ in the subalgebra $C^{0,\sigma}$ by
	the norm of $f$ in $C^{0,\sigma}$ and the norm of $1/f$ in $C(T)$
	$$ \left\| \frac{\varphi(\cdot)}{ f(\cdot )} \right\|_{C^{0,\sigma}([0,4])} \leq C \left\|\frac{\widetilde{\varphi}(\cdot)}{ f(\cdot )}  \right\|_{C^{0}([0,4])}  +   \frac{\left\|\widetilde{\varphi}(\cdot) f \right \|_{C^{0,\sigma}([0,4])}}{ \|f(\cdot )\|_{C^0([0,4])}^2 } ,$$
	where $\widetilde{\varphi} \in C_0^\infty((0,\infty))$ has slightly larger support in  $[1/2-\delta,2+\delta]$ with $\delta >0$ sufficiently small.
	Applying this estimate and the estimate
	\eqref{eq.mas102} and \eqref{eq.mas103} with $\varphi $ replaced by a cut-off function with slightly larger support, we complete the proof.
\end{proof}

\section{Estimates of the filtered Fourier transform of $m_\pm -1$}
Given a bump  function $\varphi \in C_0^\infty(\mathds{R}),$  we define the corresponding filtered Fourier transform as in \eqref{eq.ft1}.
We shall distinguish two different cases.
If the bump function $\varphi \in C_0^\infty((0,\infty))$ is such that \eqref{eq.PL1} and \eqref{eq.PL2} are satisfied, then we can assert that
$\varphi(\tau/M) $ has a support with $\tau \sim M$.

The integral equation \eqref{eq.Igra1n} with sign $+$ can be rewritten as
\begin{alignat}{2}\label{eq.ft2}
\widetilde{ m_+}(x,\tau) = &\int_x^\infty \int_0^{t-x} e^{2{\rm i} \tau y} V(t) dy dt
+ \int_x^\infty \int_0^{t-x} e^{2{\rm i} \tau y} V(t)\widetilde{m_+}(t,\tau) dy dt,
\end{alignat}
where
$$ \widetilde{m_+}(x,\tau) = m_+(x,\tau) -1.$$
If we assume that $V\in L^1_{\gamma}(\mathds{R})$, $\gamma=1+s,$
then the assertion of Lemma \ref{lem:Jostk0} guarantees that $\widetilde{m_+}(x,\tau)=m_+(x,\tau)-1$ is in $L^1_{x>0}(\mathds{R}).$

Applying the filtered Fourier transform and setting
$$ g_M(\xi;x) = \int_{\mathds{R}} e^{-{\rm i} \tau \xi} \widetilde{m_+}(x,\tau)  \varphi\left( \frac{\tau}{M}\right) d\tau = \mathcal{F}_{\varphi,M} (\widetilde{m_+}(x,\cdot)) (\xi ),$$
we get
\begin{alignat}{2}\label{eq.ft3}
g_M(\xi;x) = &\underbrace{M \int_x^\infty \int_0^{t-x}  V(t)  \widehat{\varphi}(M(\xi-2y)) dy dt}_{a_M(\xi;x)} + \\ \nonumber
+  & \int_x^\infty \int_0^{t-x}  V(t)g_M(\xi-2y;t) dy dt.
\end{alignat}

We have the following pointwise estimates.

\begin{lem}\label{lft2}
	If $\varphi \in C_0^\infty(\mathds{R}),$ satisfies \eqref{eq.PL1}, \eqref{eq.PL2} and $V \in L^1_\gamma(\mathds{R}),$ $\gamma =1+s$, $s \in (0,1),$
	then for $M \in (0,1)$ the filtered Fourier transform
	$$ \mathcal{F}_{\varphi,M} \left(\widetilde{m_\pm}(x,\cdot)\right) (\xi ) =  \int_{\mathds{R}} e^{-{\rm i} \tau \xi}  \widetilde{m_\pm}(x,\tau) \varphi\left(\frac{\tau}{M}\right) d\tau $$
	satisfies the pointwise estimates:
	\begin{itemize}
		\item one can find functions
		$$ F_M^\pm (\xi) \in L^1(\mathds{R}), \ \ \|F_M^\pm\|_{L^1(\mathds{R})} \leq C(\|V\|_{L^1_{1+s}(\mathds{R})}) \|\widehat{\varphi}\|_{L^1(\mathds{R})},$$
		so that
		\begin{equation}\label{eq.ot1}
		\mathds{1}_{\{\pm x >0\}} \langle x \rangle^s \left| \mathcal{F}_{\varphi,M} (\widetilde{m_\pm}(x,\cdot)) (\xi ) \right|  \leq  F_M^\pm(\xi).
		\end{equation}
	\end{itemize}
\end{lem}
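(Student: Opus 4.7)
The plan is to represent $\widetilde{m_+}(x,\tau)$ as the convergent Volterra series
\begin{equation*}
\widetilde{m_+}(x,\tau) \;=\; \sum_{n\geq 1} (K_+^{(\tau)})^n(1)(x),
\end{equation*}
apply the filtered Fourier transform termwise, and take $F_M^+(\xi) := \sup_{x>0} \langle x \rangle^s \,|\mathcal{F}_{\varphi,M}(\widetilde{m_+}(x,\cdot))(\xi)|$ as the desired majorant. Expanding $D(z,\tau) = \int_0^z e^{2iy\tau}\,dy$ inside each iterate and using $\int e^{-i(\xi - 2\sum y_k)\tau}\varphi(\tau/M)\,d\tau = M\widehat{\varphi}(M(\xi - 2\sum y_k))$, the filtered Fourier transform of the $n$-th iterate acquires the explicit form
\begin{equation*}
M \int_{x<t_1<\cdots<t_n} \prod_{k=1}^n V(t_k) \int_{0\leq y_k \leq t_k - t_{k-1}} \widehat{\varphi}\Bigl(M\bigl(\xi - 2\textstyle\sum_{k=1}^n y_k\bigr)\Bigr)\,dy\,dt, \qquad t_0 := x.
\end{equation*}

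For each $n$ I would first use the monotonicity $\langle x \rangle^s \leq \langle t_n \rangle^s$ (valid whenever $t_n > x > 0$), take the supremum in $x$ by enlarging the region to $\{0 < t_1 < \cdots < t_n\}$, and apply Fubini to integrate in $\xi$ first: $\int_{\mathds{R}} |\widehat{\varphi}(M(\xi - 2\sum y_k))|\,d\xi = \|\widehat{\varphi}\|_{L^1}/M$, which exactly cancels the prefactor $M$. The inner $y$-integrals produce $\prod_{k=1}^n (t_k - t_{k-1})$; I would then bound $(t_k - t_{k-1}) \leq \langle t_k \rangle$ for $1 \leq k \leq n-1$ and combine $(t_n - t_{n-1})\langle t_n \rangle^s \leq \langle t_n \rangle^{s+1}$, reducing everything to the weighted simplex integral
\begin{equation*}
\|\widehat{\varphi}\|_{L^1} \int_{0<t_1<\cdots<t_n} \prod_{k=1}^{n-1} |V(t_k)|\,\langle t_k \rangle \;\cdot\; |V(t_n)|\,\langle t_n \rangle^{s+1}\,dt_1\cdots dt_n.
\end{equation*}

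The decisive step, and the main obstacle, is to retain the Volterra ordering $t_1 < \cdots < t_{n-1} < t_n$ so as to produce a factorial denominator: integrating $(t_1,\ldots,t_{n-1})$ over the simplex $\{0 < t_1 < \cdots < t_{n-1} < t_n\}$ yields $\frac{1}{(n-1)!}\bigl(\int_0^{t_n} |V(u)|\langle u \rangle\,du\bigr)^{n-1} \leq \frac{\|V\|_{L^1_1}^{n-1}}{(n-1)!}$, and the remaining integral against $|V(t_n)|\langle t_n \rangle^{s+1}\,dt_n$ is bounded by $\|V\|_{L^1_{1+s}}$. Summing the resulting series, $\sum_{n\geq 1} \frac{\|V\|_{L^1_1}^{n-1}}{(n-1)!} = e^{\|V\|_{L^1_1}}$, yields the $M$-independent estimate $\|F_M^+\|_{L^1} \leq C(\|V\|_{L^1_{1+s}}) \|\widehat{\varphi}\|_{L^1}$. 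Any approach that sacrifices the ordering — e.g.\ a naive Gronwall-in-$L^1(\xi)$ argument directly on $F_M^+$ — produces only a geometric series in $\|V\|_{L^1_1}$ without the factorial gain, and would require a smallness hypothesis on $V$; the Volterra simplex structure is precisely what removes that restriction. The analogous bound for $\widetilde{m_-}$ on $\{x<0\}$ follows by the mirrored argument applied to $K_-^{(\tau)}$.
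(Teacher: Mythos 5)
Your proof is correct and is essentially the paper's argument with the iteration unrolled: the paper applies $\mathcal{F}_{\varphi,M}$ to the integral equation for $\widetilde{m_+}$, bounds the resulting inhomogeneous term by $F_M(\xi)=M\int_0^\infty\langle t\rangle^\gamma|V(t)|\int_0^t|\widehat{\varphi}(M(\xi-2y))|\,dy\,dt$, and closes with a Gronwall inequality in $x$ (pointwise in $\xi$) for $G_M(\xi;x)=\mathds{1}_{x>0}\sup_{\eta<\xi}|g_M(\eta;x)|\langle x\rangle^s$ --- which is exactly the resummation of your Born series and yields the same constant of the form $e^{\|V\|_{L^1_1}}\|V\|_{L^1_{1+s}}\|\widehat{\varphi}\|_{L^1}$ with no smallness assumption on $V$. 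The one caveat concerns your closing remark: the Gronwall inequality the paper uses is of Volterra type in the $x$-variable (kernel $\int_x^\infty\langle t\rangle|V(t)|\,(\cdot)\,dt$), so it already carries the factorial gain you extract from the simplex; the argument that would genuinely require smallness is only the one you describe, taken in $L^1(\xi)$ with the ordering discarded.
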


\begin{proof}
	We choose the sign $+$ in \eqref{eq.ot1} for determinacy.
	To prove \eqref{eq.ot1} we set
	$$ G_M(\xi;x) = \mathds{1}_{\{ x >0\}} \sup_{\eta < \xi} |g_M(\eta; x)|\langle x \rangle^s,$$
	where $ g_M(\xi; x)$ is the Filtered Fourier transform of the remainder $ \widetilde{m_+}(x,\tau) = m_+(x,\tau) -1,$ satisfying the integral equation \eqref{eq.ft3}.
	The function
	\begin{equation}\label{eq.ft5}
	F_M(\xi) = M \int_0^\infty \langle t \rangle^\gamma |V(t)| \int_0^t |\widehat{\varphi} (M(\xi - 2y))| dy dt ,
	\end{equation}
	satisfies
	\begin{equation}\label{eq.ft5a}
	F_M(\xi) \in L^1(\mathds{R}), \ \ \|F_M\|_{L^1(\mathds{R})} \leq \|V\|_{L^1_\gamma(\mathds{R})} \|\widehat{\varphi}\|_{L^1(\mathds{R})}.
	\end{equation}
	Moreover, since we are considering the case $x>0$ we get easily the following estimates
	\begin{equation*}
	|\mathds{1}_{x>0}\langle x \rangle^s a_M(\xi;x)|\leq F_M(\xi),
	\end{equation*}
	where $a_M(\xi;x)$ is defined in \eqref{eq.ft3}.
	Hence, coming back to $G_M(\xi;x)$ and recalling \eqref{eq.ft3} we have
	\begin{alignat}{2}\label{eq.ft6}
	G_M(\xi;x) \leq &F_M(\xi)  +  \int_x^\infty   \langle t \rangle|V(t)|  G_M(\xi;t)  dt, \ \forall x >0.
	\end{alignat}
	Applying the Gronwall lemma we get
	$$ G_M(\xi;x) \leq  C F_M(\xi),$$
	where $C$ is a positive constant depending on $\|V\|_{L^1_1(\mathds{R})}$ and $F_M(\xi)$ satisfies \eqref{eq.ft5} and \eqref{eq.ft5a}. This completes the proof.
\end{proof}

If $M\geq 1$ and $\varphi $ satisfying \eqref{eq.PL1} and \eqref{eq.PL2}, then we can improve the results of Lemma \ref{lft2}.
Indeed, the term $a_M(\xi;x)$ in \eqref{eq.ft3} can be rewritten as follows
\begin{equation*}
a_M(\xi;x)= M\int_{x}^{\infty}dt\int_{\mathds{R}}d\tau V(t)e^{-i\tau M \xi}\varphi(\tau)\frac{e^{2iM\tau(x-y)}-1}{2iM\tau}.
\end{equation*}
Hence we have that
\begin{equation*}
|\mathds{1}_{x>0} \langle x \rangle^sa_M(\xi;x)|\leq F^{(1)}_M(\xi),
\end{equation*}
where
\begin{equation}
F^{(1)}_M(\xi)= \int_x^\infty \langle t\rangle^s|V(t)||\hat{\varphi}(M\xi)|\,dt
\end{equation}
and
\begin{equation*}
\|F^{(1)}_M(\xi)\|_{L^1(\mathds{R})}\leq \frac{1}{M}\|V\|_{L^1_s(\mathds{R})} \|\hat{\varphi}\|_{L^1(\mathds{R})}.
\end{equation*}

Proceeding as in the proof of Lemma \ref{lft2} we get the following result.
\begin{lem}\label{lft2m1}
	If $\varphi $ satisfies \eqref{eq.PL1} and \eqref{eq.PL2} and $V \in L^1_\gamma(\mathds{R}), $ $\gamma =1+s$, $s \in (0,1),$
	then for $M \in (0,\infty)$ the filtered Fourier transform
	$$ \mathcal{F}_{\varphi,M} (\widetilde{m_\pm}(x,\cdot)) (\xi ) =  \int_{\mathds{R}} e^{-{\rm i} \tau \xi} \left( \widetilde{m_\pm}(x,\tau) \right) \varphi\left(\frac{\tau}{M}\right) d\tau $$
	satisfies the pointwise estimates:
	\begin{itemize}
		\item one can find functions
		$$ F_M^\pm (\xi) \in L^1(\mathds{R}), \ \ \|F_M^\pm\|_{L^1(\mathds{R})} \leq\frac{1}{\langle M\rangle} C(\|V\|_{L^1_{1+s}(\mathds{R})}) \|\widehat{\varphi}\|_{L^1(\mathds{R})},$$
		so that
		\begin{equation}\label{eq.ot1m1}
		\mathds{1}_{\{\pm x >0\}} \langle x \rangle^{s} \left| \mathcal{F}_{\varphi,M} (\widetilde{m_\pm}(x,\cdot)) (\xi ) \right|  \leq  F_M^\pm(\xi).
		\end{equation}
	\end{itemize}
	
\end{lem}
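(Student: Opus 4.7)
The case $M \in (0,1)$ is already contained in Lemma \ref{lft2}, since on this range $\langle M\rangle$ is bounded above and below by positive constants, so $1/\langle M\rangle \simeq 1$. What remains to establish is the extra factor $1/M$ when $M \ge 1$, which reflects the gain provided by the oscillation of $e^{-i\tau\xi}$ on the support $\tau \sim M$ of $\varphi(\tau/M)$.

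The plan is to keep intact the Gronwall scheme from the proof of Lemma \ref{lft2} applied to
\[
G_M(\xi;x) := \mathds{1}_{x>0}\langle x\rangle^s\sup_{\eta\le\xi}|g_M(\eta;x)|,
\]
and to replace only the pointwise estimate on the forcing term $a_M(\xi;x)$ in the integral equation \eqref{eq.ft3} by a sharper one for $M \ge 1$. Since $\varphi$ satisfies \eqref{eq.PL1}--\eqref{eq.PL2}, it is supported away from zero, so $\psi(\tau):=\varphi(\tau)/\tau$ belongs to $C_0^\infty(\mathds{R}\setminus\{0\})$ and $\widehat{\psi}$ is Schwartz. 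After the substitution $\tau\mapsto M\tau$ and interchange of integration in $a_M$, the factor $1/\tau$ hidden in $D(t-x,\tau)$ is absorbed into $\psi$, yielding the identity
\[
a_M(\xi;x) = \frac{1}{2i}\int_x^\infty V(t)\bigl[\widehat{\psi}(M(\xi - 2(t-x))) - \widehat{\psi}(M\xi)\bigr]\,dt.
\]

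Taking absolute values and using $\langle x\rangle^s\le\langle t\rangle^s$ on $\{0<x<t\}$ gives a pointwise majorant $F_M^{(1),+}(\xi;x)$ of $\mathds{1}_{x>0}\langle x\rangle^s|a_M(\xi;x)|$ whose $L^1_\xi$-norm is controlled uniformly in $x>0$ by $C\,\|V\|_{L^1_{1+s}}\,\|\widehat{\psi}\|_{L^1}/M$, using the scaling $\|\widehat{\psi}(M\cdot)\|_{L^1}=\|\widehat{\psi}\|_{L^1}/M$ together with Fubini. Either taking an $x$-independent dominating function $F_M^+(\xi)$ with the same $L^1$-bound, or running the Gronwall step with the $x$-dependent forcing exactly as in Lemma \ref{lft2}, leads to
\[
G_M(\xi;x) \le F_M^+(\xi) + \int_x^\infty \langle t\rangle |V(t)|\,G_M(\xi;t)\,dt,
\]
and the integrability of $\langle t\rangle V(t)$ under $V\in L^1_{1+s}$ lets Gronwall close to give $G_M(\xi;x)\le C\,F_M^+(\xi)$. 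This proves \eqref{eq.ot1m1} for the $+$ sign; the $-$ sign is symmetric.

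The main obstacle is the cancellation-based improvement of the bound on $a_M$: one must exploit that the compact support of $\varphi$ away from the origin makes $\psi=\varphi/\tau$ smooth (so that $\widehat{\psi}$ is Schwartz and $\|\widehat{\psi}(M\cdot)\|_{L^1}=O(1/M)$), and then verify that the resulting pointwise majorant, which naturally depends on $x$ through the translation $M(\xi-2(t-x))$ inside $\widehat{\psi}$, does not lose the factor $1/M$ when passed through the Gronwall iteration. Without this oscillation argument one only recovers the $x$-uniform bound of Lemma \ref{lft2} with no gain in $M$.
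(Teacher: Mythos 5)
Your proposal follows essentially the same route as the paper: for $M\ge 1$ the paper likewise rescales $\tau\mapsto M\tau$ in $a_M$, absorbs the factor $1/(2iM\tau)$ into the bump (using that $\varphi$ is supported away from the origin, so $\varphi(\tau)/\tau$ is still a smooth compactly supported symbol whose rescaled Fourier transform has $L^1$-norm $O(1/M)$), and then reruns the Gronwall iteration of Lemma \ref{lft2}; the small-$M$ regime is covered by Lemma \ref{lft2} exactly as you say. Your explicit identity for $a_M$ in terms of $\widehat{\psi}$ is in fact a more carefully written version of the paper's own (rather loosely stated) majorant $F^{(1)}_M$, so the argument is correct and matches the intended proof.
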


One can use a Wiener type argument and deduce estimates for $T(\tau), R_\pm(\tau)+1.$
\begin{lem}\label{lft3} (see \cite{DF06}, \cite{Ze10})
	If $\varphi \in C_0^\infty(\mathds{R})$ obeys \eqref{eq.PL1}, \eqref{eq.PL2} and $V \in L^1_\gamma(\mathds{R}),$ $\gamma =1+s$, $s \in (0,1),$
	then for  $M \in (0,\infty)$ the filtered Fourier transforms
	$$ \mathcal{F}_{\varphi,M} (T(\cdot)) (\xi ) =  \int_{\mathds{R}} e^{-{\rm i} \tau \xi} T(\tau) \varphi\left(\frac{\tau}{M}\right) d\tau $$
	and
	$$ \mathcal{F}_{\varphi,M} (R_\pm (\cdot)+1) (\xi ) =  \int_{\mathds{R}} e^{-{\rm i} \tau \xi} (R_\pm (\tau)+1) \varphi\left(\frac{\tau}{M}\right) d\tau $$
	are in $L^1(\mathds{R})$ and the following inequality are satisfied
	\begin{gather*}\label{eq.FFf4}
	\|\mathcal{F}_{\varphi,M}( T(\cdot)) (\xi ) \|_{L^1(\mathds{R})} + \|\mathcal{F}_{\varphi,M} (R_\pm(\cdot)+1) (\xi ) \|_{L^1(\mathds{R})} \leq C(\|V\|_{L^1_{1+s}(\mathds{R})}) \|\widehat{\varphi}\|_{L^1(\mathds{R})}, \ M\in(0,1),\\
	\|\mathcal{F}_{\varphi,M}( T(\cdot)-1) (\xi ) \|_{L^1(\mathds{R})} + \|\mathcal{F}_{\varphi,M} R_\pm(\cdot) (\xi ) \|_{L^1(\mathds{R})} \leq \frac{1}{\langle M\rangle}C(\|V\|_{L^1_{1+s}(\mathds{R})}) \|\widehat{\varphi}\|_{L^1(\mathds{R})}, \ M>1.
	\end{gather*}
\end{lem}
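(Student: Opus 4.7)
The plan is to start from the explicit integral representations \eqref{eq.63a} and \eqref{eq.TRC7}, split the modified Jost functions as $m_\pm = 1 + \widetilde{m_\pm}$, apply Fubini to exchange the filtered Fourier transform with the $t$-integration against $V$, invoke the filtered-Fourier bounds of Lemmas \ref{lft2} and \ref{lft2m1}, and treat the denominator $B(\tau) := 2i\tau - \int V(t)m_+(t,\tau)\,dt = 2i\tau/T(\tau)$ by a Wiener-algebra inversion that is possible precisely because of the non-resonance hypothesis.

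Substituting $T(\tau) = 2i\tau/B(\tau)$ into \eqref{eq.TRC7} gives
\[
R_\pm(\tau) = \frac{1}{B(\tau)}\int_{\mathds{R}} V(t)\,e^{\mp 2it\tau}\bigl(1 + \widetilde{m_\mp}(t,\tau)\bigr)\,dt,
\]
and an analogous identity from \eqref{eq.63a} produces $T(\tau) - 1$ as $-A(\tau)/B(\tau)$ (up to lower-order terms) with $A(\tau) = \int V(t) m_+(t,\tau)\,dt$. By Fubini,
\[
\mathcal{F}_{\varphi,M}(R_\pm)(\xi) = \int_{\mathds{R}} V(t)\,\mathcal{F}_{\varphi,M}\bigl(B^{-1}\cdot(1+\widetilde{m_\mp}(t,\cdot))\bigr)(\xi \mp 2t)\,dt,
\]
so after taking $L^1_\xi$-norms and exploiting translation invariance the estimate reduces to controlling: (i) $\|\mathcal{F}_{\varphi,M}(B^{-1})\|_{L^1}$; (ii) $\|\mathcal{F}_{\varphi,M}(B^{-1}\widetilde{m_\mp}(t,\cdot))\|_{L^1}$ integrated against $|V(t)|\,dt$; and (iii) the parallel terms for $T-1$. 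For part (i), the non-resonance hypothesis $T(0)=0$ is equivalent to $B(0)\neq 0$, and $B(\tau)\sim 2i\tau$ at infinity, yielding a uniform lower bound $|B(\tau)|\gtrsim \langle \tau\rangle$; the norm-controlled inversion in smooth Banach algebras (cited in the proof of Lemma \ref{lem:TCH1}) then places $\varphi(\cdot/M)/B(\cdot)$ in a local Wiener algebra, with size $O(1/\langle M\rangle)$ for $M\ge 1$ and uniformly bounded for $M<1$. For part (ii) I would combine this with the local Wiener-algebra product property and the pointwise bound \eqref{eq.ot1m1} (or \eqref{eq.ot1} for $M<1$), in which the factor $\langle t\rangle^{-s}$ together with $V\in L^1_{1+s}$ makes the $t$-integral $\int|V(t)|\langle t\rangle^{-s}\,dt$ finite. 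Part (iii) is handled identically from \eqref{eq.63a}.

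The principal obstacle is the uniform-in-$M$ inversion appearing in part (i): without the non-resonance condition, $B(0)$ could vanish and $\mathcal{F}_{\varphi,M}(1/B)$ would blow up as $M\to 0$, destroying the low-frequency bound. With the non-resonance hypothesis, the H\"older estimates from Lemmas \ref{lem:TRcoeff}(c) and \ref{lem:TCH1}, and the scaling identity $\|\mathcal{F}_{\varphi,M}(f)\|_{L^1}=\|\mathcal{F}(\varphi(\cdot)f(M\cdot))\|_{L^1}$, this inversion reduces to a fixed-scale statement on a compact annulus, which is standard. Once part (i) is in place, parts (ii) and (iii) follow directly from Lemmas \ref{lft2}--\ref{lft2m1}, and the two regimes $M<1$ and $M\ge 1$ yield the announced $C$ and $C/\langle M\rangle$ bounds respectively.
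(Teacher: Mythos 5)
The paper offers no proof of Lemma \ref{lft3}: it is stated with a pointer to \cite{DF06} and \cite{Ze10}, preceded only by the remark that ``one can use a Wiener type argument.'' Your proposal reconstructs exactly that argument --- the representations \eqref{eq.63a} and \eqref{eq.TRC7}, the translation structure produced by the factor $e^{\mp 2it\tau}$ after Fubini, the Wiener-algebra inversion of $B(\tau)=2i\tau/T(\tau)=2i\tau-\int V(t)m_+(t,\tau)\,dt$, and the two regimes $M<1$ and $M\ge 1$ --- so it supplies precisely what the paper outsources to the literature, and the skeleton is sound.

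Two points need attention before the argument closes. First, in your step (ii) you control $\int|V(t)|\,\bigl\|\mathcal{F}_{\varphi,M}\bigl(B^{-1}\widetilde{m_\mp}(t,\cdot)\bigr)\bigr\|_{L^1}\,dt$ by invoking \eqref{eq.ot1} and \eqref{eq.ot1m1} and the integral $\int|V(t)|\langle t\rangle^{-s}\,dt$. But Lemmas \ref{lft2} and \ref{lft2m1} are stated only on the half-lines $\pm t>0$ matching the sign of $m_\pm$, whereas in \eqref{eq.63a} and \eqref{eq.TRC7} the variable $t$ runs over all of $\mathds{R}$; on the complementary half-line \eqref{eq.1k0} gives $|\widetilde{m_\pm}(t,\tau)|\lesssim\langle t\rangle$, growing rather than decaying. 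You therefore need the companion filtered-Fourier estimate with weight $\langle t\rangle^{+1}$ on the wrong half-line (same Gronwall proof as Lemma \ref{lft2}, and still integrable against $V\in L^1_{1+s}\subset L^1_1$); as written, the step applies the lemmas outside their stated range. Second, your inversion of $B$ rests on the non-resonance hypothesis $T(0)=0$ (equivalently $B(0)\neq 0$), which is not among the stated hypotheses of Lemma \ref{lft3}; this is consistent with the paper's standing assumptions but should be made explicit, since for $M\in(0,1)$ the uniform lower bound $|B(\tau)|\gtrsim\langle\tau\rangle$ genuinely fails in the resonant case and the low-frequency bound would then have to be obtained directly from the boundedness of $T$ and $R_\pm+1$ rather than through $1/B$. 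With these two repairs the proof is complete.
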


Turning to the estimates \eqref{eq.ot1}, we see that
$$   a(x,\xi) =  \mathds{1}_{\{\pm x >0\}}\mathcal{F}_{\varphi,M} (\widetilde{m_\pm}(x,\cdot)) (\xi ) $$
satisfies estimate
\begin{equation}\label{eq.ot20}
|a(x,\xi)| \leq a_1(x) a_2(\xi), \ a_1 \in L^{1/s, \infty}(\mathds{R}), \  a_2 \in L^1(\mathds{R}),
\end{equation}
where $a_1(x)=\langle x \rangle^{-s}$.
Lemma \ref{lft3} guarantees that
$$ b(\xi) =   \mathcal{F}_{\varphi,M} (T(\cdot)) (\xi ) \in L^1(\mathds{R}).$$
Since
$$ \mathds{1}_{\{\pm x >0\}}\mathcal{F}_{\varphi,M} (T(\cdot) (\widetilde{m_\pm}(x,\cdot))) (\xi ) =  a(x, \cdot) * b(\cdot)(\xi),$$
we see that
$$ \left| a(x, \cdot) * b(\cdot)(\xi) \right| \leq a_1(x) \underbrace{a_2* |b|}_{\widetilde{a_2}}(\xi), \ a_1 \in L^{1/s, \infty}(\mathds{R}),  \ \widetilde{a_2 }\in L^1(\mathds{R}),$$
since
$$ L^1 * L^1 \subset L^1$$
due to the Young inequality.

The above inclusion actually can be modified in a way suitable for our a priori estimates as follows
\begin{equation}\label{eq.DO1}
\left( L^1 \cap L^\infty \right)  * \left( L^1 \cap L^\infty \right) \subset  \left( L^1 \cap L^\infty \right).
\end{equation}

This observation leads to the following.

\begin{lem}\label{lft4}
	If $\varphi \in C_0^\infty(\mathds{R}),$  $V \in L^1_\gamma(\mathds{R}), $ $\gamma =1+s$, $s \in (0,1),$ and
	$a^\pm (x,\tau)$ is any function in the set
	\begin{equation}\label{eq.ot24}
	\left\{ \widetilde{m_\pm}(x,\tau) ,\  T(\tau) \widetilde{m_\pm}(x,\tau),\  (R_\pm(\tau)+1) \widetilde{m_\pm}(x,\tau)   \right\},
	\end{equation}
	then for $M \in (0,\infty)$ the filtered Fourier transform
	$$ \mathcal{F}_{\varphi,M} (a^\pm (x,\cdot)) (\xi ) =  \int_{\mathds{R}} e^{-{\rm i} \tau \xi} a^\pm (x,\tau) \varphi\left(\frac{\tau}{M}\right) d\tau $$
	satisfies the pointwise estimates:
	\begin{equation}\label{eq.ot25}
	\mathds{1}_{\{\pm x >0\}}\left| \mathcal{F}_{\varphi,M} (a^\pm (x,\cdot)) (\xi ) \right|  \leq  f_1(x) f^{(M)}_2(\xi),
	\end{equation}
	where
	\begin{equation*}
	f_1 (x) \in L^{1/s,\infty}(\mathds{R}) \cap L^\infty(\mathds{R}) , \ f^{(M)}_2(\xi) \in L^1(\mathds{R})
	\end{equation*}
	and $\|f^{(M)}_2\|_{L^1(\mathds{R})} \leq C/\langle M\rangle $.
\end{lem}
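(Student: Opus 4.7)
The plan is to treat the first case $a^\pm = \widetilde{m_\pm}$ as a direct consequence of Lemma \ref{lft2m1}, and to reduce the remaining two cases to it by writing the filtered Fourier transform of a product as a convolution and applying Lemma \ref{lft3} together with Young's inequality.

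For the first case, Lemma \ref{lft2m1} already gives
$$\mathds{1}_{\{\pm x>0\}}\langle x\rangle^s \bigl|\mathcal{F}_{\varphi,M}(\widetilde{m_\pm}(x,\cdot))(\xi)\bigr| \leq F_M^\pm(\xi),$$
with $F_M^\pm \in L^1(\mathds{R})$ of norm bounded by $C/\langle M\rangle$. One then sets $f_1(x) = \langle x\rangle^{-s}$, which belongs to $L^{1/s,\infty}(\mathds{R}) \cap L^\infty(\mathds{R})$, and $f_2^{(M)} = F_M^\pm$. This settles the first case.

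For the second case $a^\pm = T(\tau)\widetilde{m_\pm}(x,\tau)$, I would pick an auxiliary bump $\widetilde\varphi \in C_0^\infty(\mathds{R})$ with $\widetilde\varphi \equiv 1$ on $\mathrm{supp}\,\varphi$, so that $\varphi(\tau/M)\, T(\tau)\,\widetilde{m_\pm}(x,\tau) = \bigl[\widetilde\varphi(\tau/M)T(\tau)\bigr]\cdot\bigl[\varphi(\tau/M)\widetilde{m_\pm}(x,\tau)\bigr]$. Up to a universal multiplicative constant, the filtered Fourier transform of this product then equals the convolution $\mathcal{F}_{\widetilde\varphi,M}(T)\ast \mathcal{F}_{\varphi,M}(\widetilde{m_\pm}(x,\cdot))$, so taking absolute values and inserting the pointwise bound from Lemma \ref{lft2m1} on the second factor yields
$$\mathds{1}_{\{\pm x>0\}}\bigl|\mathcal{F}_{\varphi,M}(T\widetilde{m_\pm}(x,\cdot))(\xi)\bigr| \leq \langle x\rangle^{-s}\,\bigl(|\mathcal{F}_{\widetilde\varphi,M}(T)|\ast F_M^\pm\bigr)(\xi),$$
and Young's inequality controls the $L^1$ norm of the right-hand convolution by $\|\mathcal{F}_{\widetilde\varphi,M}(T)\|_{L^1}\,\|F_M^\pm\|_{L^1}$.

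The $L^1$ decay in $M$ is then read off from Lemma \ref{lft3}. For $M\in(0,1)$ both factors are $O(1)$, which is consistent with $C/\langle M\rangle$. For $M\geq 1$ I split $T=(T-1)+1$: the $(T-1)$ contribution gives $O(1/M)\cdot O(1/M)=O(1/M^2)$, while the $+1$ contribution is $\mathcal{F}_{\widetilde\varphi,M}(1)(\xi)=M\,\widehat{\widetilde\varphi}(M\xi)$, of $L^1$ norm $\|\widehat{\widetilde\varphi}\|_{L^1}=O(1)$, which combined with $\|F_M^\pm\|_{L^1}\leq C/M$ produces the decisive $O(1/M)$ term. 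The case $a^\pm=(R_\pm+1)\widetilde{m_\pm}$ is handled identically, using the bounds on $R_\pm+1$ for $M<1$ and on $R_\pm$ for $M\geq 1$ from Lemma \ref{lft3}. The only mildly delicate point is precisely preserving the $C/\langle M\rangle$ bound at large $M$, which forces the splitting $T=(T-1)+1$; once this is noticed, everything reduces to a mechanical application of Lemmas \ref{lft2m1}, \ref{lft3} and Young's inequality.
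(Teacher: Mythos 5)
Your proposal is correct and follows essentially the same route as the paper: the paper obtains Lemma \ref{lft4} from the pointwise bound of Lemma \ref{lft2m1} for $\widetilde{m_\pm}$ (giving $f_1(x)=\langle x\rangle^{-s}$) together with the $L^1$ bounds of Lemma \ref{lft3}, writing the filtered Fourier transform of $T\widetilde{m_\pm}$ and $(R_\pm+1)\widetilde{m_\pm}$ as convolutions and invoking Young's inequality. Your introduction of the auxiliary bump $\widetilde{\varphi}$ and the splitting $T=(T-1)+1$ for $M\geq 1$ merely make explicit two details the paper leaves implicit.
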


Finally we consider products of type $a^\pm (x,\tau) b^\pm (y,\tau),$ where $a,b$ are in the set \eqref{eq.ot24} and we have the following estimates.

\begin{lem}\label{lft5}
	If $\varphi \in C_0^\infty(\mathds{R})$ is a bump function satisfying \eqref{eq.PL1}, \eqref{eq.PL2}, $V \in L^1_\gamma(\mathds{R}), $ $\gamma =1+s$, $s \in (0,1),$
	then for $M \in (0,\infty)$ the filtered Fourier transform of $a^\pm (x,\tau)b^\pm (y,\tau)$
	satisfies the pointwise estimate:
	\begin{equation}\label{eq.ot27}
	\mathds{1}_{\pm x>0}\mathds{1}_{\pm y >0} \left| \mathcal{F}_{\varphi,M} (a^\pm (x,\cdot)b^\pm (y,\cdot)) (\xi ) \right|  \leq  f_1(x) f^{(M)}_2(\xi) f_3(y),
	\end{equation}
	where
	$$  f_1 , f_3 \in L^{1/s,\infty}(\mathds{R}) \cap L^\infty(\mathds{R}) , \ f_2^{(M)}(\xi) \in L^1(\mathds{R}), \ \|f_2^{(M)}\|_{L^1(\mathds{R})} \leq \frac{C}{\langle M\rangle} $$
	with some constant $C>0$ independent of $M.$
\end{lem}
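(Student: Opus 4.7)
The plan is to reduce the product estimate directly to the single-factor estimate already proved in Lemma \ref{lft4}, via the convolution identity for the Fourier transform. The key observation is that the bump $\varphi(\tau/M)$ can be split multiplicatively, so that the filtered Fourier transform of a product in $\tau$ becomes a convolution in $\xi$ of two individual filtered Fourier transforms, each of which Lemma \ref{lft4} controls.

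Concretely, I first pick two bump functions $\psi_1,\psi_2\in C_0^\infty(\mathds{R})$ with $\psi_1(\tau)\psi_2(\tau)=\varphi(\tau)$; for instance, let $\tilde\psi\in C_0^\infty(\mathds{R})$ equal one on $\mathrm{supp}\,\varphi$ and set $\psi_1:=\tilde\psi$, $\psi_2:=\varphi$. Writing
$$\varphi(\tau/M)\,a^\pm(x,\tau)b^\pm(y,\tau)=\bigl[\psi_1(\tau/M)a^\pm(x,\tau)\bigr]\bigl[\psi_2(\tau/M)b^\pm(y,\tau)\bigr],$$
the fact that the Fourier transform of a product equals the convolution of the Fourier transforms gives
$$\mathcal{F}_{\varphi,M}\bigl(a^\pm(x,\cdot)b^\pm(y,\cdot)\bigr)(\xi)=\frac{1}{2\pi}\bigl(\mathcal{F}_{\psi_1,M}(a^\pm(x,\cdot))\ast\mathcal{F}_{\psi_2,M}(b^\pm(y,\cdot))\bigr)(\xi).$$
Applying Lemma \ref{lft4} separately, with $\psi_1$ and $\psi_2$ in place of $\varphi$, produces pointwise bounds
$$\mathds{1}_{\pm x>0}\bigl|\mathcal{F}_{\psi_1,M}(a^\pm(x,\cdot))(\eta)\bigr|\le f_1(x)\,g_1^{(M)}(\eta),\qquad \mathds{1}_{\pm y>0}\bigl|\mathcal{F}_{\psi_2,M}(b^\pm(y,\cdot))(\eta)\bigr|\le f_3(y)\,g_2^{(M)}(\eta),$$
with $f_1,f_3\in L^{1/s,\infty}(\mathds{R})\cap L^\infty(\mathds{R})$ and $\|g_i^{(M)}\|_{L^1(\mathds{R})}\le C/\langle M\rangle$ for $i=1,2$.

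Substituting into the convolution and pulling the $(x,y)$-dependent factors outside the integration in $\eta$ yields
$$\mathds{1}_{\pm x>0}\mathds{1}_{\pm y>0}\bigl|\mathcal{F}_{\varphi,M}(a^\pm(x,\cdot)b^\pm(y,\cdot))(\xi)\bigr|\le f_1(x)\,f_3(y)\cdot\tfrac{1}{2\pi}(g_1^{(M)}\ast g_2^{(M)})(\xi),$$
and Young's inequality $L^1\ast L^1\subset L^1$ gives
$$\bigl\|\tfrac{1}{2\pi}(g_1^{(M)}\ast g_2^{(M)})\bigr\|_{L^1(\mathds{R})}\le\tfrac{1}{2\pi}\|g_1^{(M)}\|_{L^1}\|g_2^{(M)}\|_{L^1}\le\frac{C}{\langle M\rangle^{2}}\le\frac{C}{\langle M\rangle}.$$
Defining $f_2^{(M)}:=\tfrac{1}{2\pi}(g_1^{(M)}\ast g_2^{(M)})$ then delivers \eqref{eq.ot27}.

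The main point to check is essentially conceptual rather than technical: even though Lemma \ref{lft5} assumes the Paley–Littlewood conditions \eqref{eq.PL1}--\eqref{eq.PL2} for $\varphi$, the factorization $\varphi=\psi_1\psi_2$ need not inherit them, and fortunately Lemma \ref{lft4} only requires $\psi_i\in C_0^\infty(\mathds{R})$, so any smooth factorization is admissible. Beyond this verification, everything reduces to the convolution identity together with Young's inequality, and in fact the argument even produces the stronger gain $\langle M\rangle^{-2}$, which we discard to match the statement.
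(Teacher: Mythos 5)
Your argument is correct and is essentially the route the paper takes (the paper leaves this lemma without a written proof, but its discussion preceding Lemma \ref{lft4} makes clear that the intended mechanism is exactly this product-to-convolution reduction followed by $L^1 * L^1 \subset L^1$); your explicit factorization $\varphi=\psi_1\psi_2$ is in fact a welcome clean-up, since it makes the convolution identity literally correct where the paper is a bit loose. The only point to tighten is that the $\langle M\rangle^{-1}$ decay of the $\xi$-factor in Lemma \ref{lft4} ultimately comes from Lemmas \ref{lft2m1} and \ref{lft3}, which need the cutoff supported away from the origin, so either choose $\tilde\psi\in C_0^\infty(\mathds{R}\setminus\{0\})$ (possible since $\mathrm{supp}\,\varphi$ avoids $0$) or settle for the uniform $L^1$ bound on the $\psi_1$-factor and take the decay from the $\psi_2=\varphi$ factor alone, which still yields $\|f_2^{(M)}\|_{L^1}\leq C/\langle M\rangle$.
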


Now we can proceed with the proof of Lemma \ref{Ber1}.

\begin{proof}[Proof of Lemma \ref{Ber1}] To fix the idea and to simplify the notation we consider the case involving $b^+(y,\tau)=b(y,\tau)$. We separate two cases: $M \in (0,1]$ and $M \geq 1.$ For $M \in (0,1]$ our first step is to prove
	\begin{alignat}{2}\label{eq.DO7}
	& \left\| \int_{\mathds{R}}  \mathds{1}_{y>0}\mathcal{F}_{\varphi,M}(b(y,\cdot))(x \pm  y) f(y)dy \right\|_{L_x^p(\mathds{R})}  \leq C \|f\|_{L^q(\mathds{R})}.
	\end{alignat}
	We use the pointwise estimate \eqref{eq.ot25} so we can write
	$$\mathds{1}_{y>0}\left| \mathcal{F}_{\varphi,M}(b(y,\cdot))(x \pm  y) \right| \leq B_1^{(M)}(x\pm y) B_2(y) ,$$
	where $$ B_1^{(M)}\in L^1(\mathds{R}),  \ \  \|B_1^{(M)}\|_{L^1(\mathds{R})} \leq C , \ \ B_2 \in L^{1/s,\infty}(\mathds{R})  $$
	and \eqref{eq.DO7} follows from Young inequality
	\begin{equation}\label{eq.DO8}
	\left\| B_1^{(M)} * (B_2 f) \right\|_{L_x^p(\mathds{R})} \leq C \|B_1^{(M)}\|_{L^{1}(\mathds{R})}\|B_2 f\|_{L^p(\mathds{R})},
	\end{equation}
	and the H\"older estimate
	\begin{equation}\label{eq.DO9}
	\left\| B_2 f  \right\|_{L^p(\mathds{R})} \leq C \|f\|_{L^{q}(\mathds{R})}, \ \  B_2 \in L^{1/s,\infty}(\mathds{R}), \  \frac{1}q = \frac{1}p -s.
	\end{equation}
	Similarly, to prove
	\begin{alignat}{2}\label{eq.DO11}
	& \left\| \int_{\mathds{R}}  \mathds{1}_{x>0}\mathcal{F}_{\varphi,M}(b(x,\cdot))(x \pm  y) f(y)dy \right\|_{L_x^p(\mathds{R})}  \leq C \|f\|_{L^q(\mathds{R})}
	\end{alignat}
	we use the pointwise estimate \eqref{eq.ot25} again, so we can write
	$$\mathds{1}_{x>0}\left| \mathcal{F}_{\varphi,M}(b(x,\cdot))(x \pm  y) \right| \leq B_1^{(M)}(x\pm y) B_2(x) ,$$
	where $$ B_1^{(M)}\in L^1(\mathds{R}),  \ \  \|B_1^{(M)}\|_{L^1(\mathds{R})} \leq C , \  B_2 \in L^{1/s,\infty}(\mathds{R}).  $$
	This time we have to estimate the term
	$$   \left\| B_2( B_1^{(M)} *  f) \right\|_{L_x^p(\mathds{R})}$$
	so first we apply H\"older estimate \eqref{eq.DO9} and then the Young convolution inequality.
	
	Finally, the estimate \eqref{eq.DO6} follows from \eqref{eq.ot27} since we have
	$$ \mathds{1}_{x>0}\mathds{1}_{y>0}\left| \mathcal{F}_{\varphi,M}(b_1(x,\cdot)b_2(y,\cdot))(x \pm  y) \right| \leq  B_1^{(M)}(x\pm y) B_2(y) B_3(x) ,$$
	where
	$$ B_1^{(M)} \in L^1(\mathds{R}), \ \ \|B_1^{(M)}\|_{L^1(\mathds{R})} \leq C, \ \ B_2(y),  B_3(x)\in  L^{1/s,\infty}(\mathds{R}) \cap L^\infty(\mathds{R}) .$$
	This completes the proof for the case $M \in (0,1].$ For $M \geq 1$ we simply use the fact that we have better estimate
	$$\|B_1^{(M)}\|_{L^1(\mathds{R})} \leq C M^{-1}$$ and we prove \eqref{eq.DO5} and \eqref{eq.DO6} assuming $V \in L^1_1(\mathds{R})$ only. This completes the proof.
\end{proof}

\section{ Equivalence of homogeneous Sobolev norms }
In this section we are going to prove Lemma \ref{KL1}.

\begin{proof}[Proof of the inequality \eqref{6.3}]
	The relation \eqref{eq.Ber2} guarantees that
	$$ \pi_k^{ac}( f)(x)  -  I_k(f)(x)  $$
	can be represented as a sum of remainder terms of the form
	\begin{gather*}
	\sum_{\epsilon_1,\dots,\epsilon_4=\pm 1}\mathds{1}_{\epsilon_1 x>0}\int_{\mathds{R}}\mathds{1}_{\epsilon_2 y>0}\mathcal{F}_{\varphi,M}(b_1(x,\cdot))(\epsilon_3 x +\epsilon_4 y)f(y)\,dy+\\
	+\sum_{\epsilon_1,\dots,\epsilon_4=\pm 1 } \mathds{1}_{\epsilon_1 x>0}\int_{\mathds{R}}\mathds{1}_{\epsilon_2 y>0}\mathcal{F}_{\varphi,M}(b_2(y,\cdot))(\epsilon_3 x
	+\epsilon_4 y)f(y)\,dy+\nonumber\\
	+\sum_{\epsilon_1,\dots,\epsilon_4=\pm 1} \mathds{1}_{\epsilon_1 x>0}\int_{\mathds{R}}\mathds{1}_{\epsilon_2 y>0}\mathcal{F}_{\varphi,M}(b_3(x,\cdot)b_4(y,\cdot))(\epsilon_3 x +\epsilon_4 y)f(y)\,dy,\nonumber
	\end{gather*}
	such that the estimates of Lemma \ref{Ber1} imply
	$$ \left\| \left(\pi_k - I_k  \right)f \right\|_{L^p(\mathds{R})} \leq \frac{C}{\langle 2^k \rangle } \|f\|_{L^q(\mathds{R})},$$
	with
	$$ \frac{1}{q}   =\frac{1}{p}-s.$$
	Using the inequalities
	$$ \left\| \left\| 2^{ks} \left( \pi_k - I_k\right)  f\right\|_{\ell^2_k}   \right\|_{L^p_x(\mathds{R})}  \leq
	\left\| \left\| 2^{ks} \left( \pi_k - I_k\right)  f \right\|_{\ell^1_k}   \right\|_{L^p_x(\mathds{R})} \leq  $$
	$$ \leq  \left\| \left\| 2^{ks} \left( \pi_k - I_k\right)  f  \right\|_{L^p_x(\mathds{R})} \right\|_{\ell^1_k} \leq
	\left\| \frac{2^{ks}}{\langle 2^k \rangle} \right\|_{\ell^1_k}\left\| f \right\|_{L^q_x(\mathds{R})},$$
	and so we deduce \eqref{6.3}.
	
	This completes the proof.
\end{proof}

\begin{proof}[Proof of Lemma \ref{KL1}]
Our main goal is to establish the following estimate
\begin{equation}\label{eq.1}
\left\|\left\|2^{ks}(\pi_k-\pi^0_k)f \right\|_{\ell^2_{k}}\right\|_{L^p(\mathds{R})}\leq C \|f\|_{L^q(\mathds{R})},
\end{equation}
with $1/q=1/p-s$.

We start proving that
\begin{equation}\label{eq.1a}
\left\|\left\|2^{ks}(\pi_k-\pi^0_k)f \right\|_{\ell^2_{k\leq 0}}\right\|_{L^p(\mathds{R})}\leq C \|f\|_{L^q(\mathds{R})}.
\end{equation}

In particular, it will be enough to prove the inequality \eqref{6.3b}, i.e.
\begin{equation*}
\left\|\left\|2^{ks}(I_k-\pi^0_k)f \right\|_{\ell^2_{k\leq 0}}\right\|_{L^p(\mathds{R})}\leq C \|f\|_{L^q(\mathds{R})},
\end{equation*}
since the estimate \eqref{6.3} has been just established above.

Using the decomposition
\begin{equation*}
f=\sum_{j\in\mathbb{Z}} f_j^0,
\end{equation*}
we have that
\begin{equation}\label{eq.3}
\left( I_k-\pi_k^0\right)f = \left( I_k-\pi_k^0\right) f^0_{k-2,k+2}.
\end{equation}
Indeed, if follows from
\begin{equation*}
\left( I_k-\pi_k^0 \right) f^0_{\leq {k-2}}(x)= \int \int e^{i(x+y)\tau}\varphi\left(\frac{\tau}{2^k}\right) f^0_{\leq k-2}(y)\,d\tau\,dy =0
\end{equation*}
and
\begin{equation*}
\left( I_k-\pi_k^0 \right) f^0_{\geq {k-2}}(x)= \int \int e^{i(x+y)\tau}\varphi\left(\frac{\tau}{2^k}\right) f^0_{\geq k-2}(y)\,d\tau\,dy =0.
\end{equation*}
Moreover, the expression of the leading term shows that the kernel $\left( I_k-\pi_k^0 \right)(x,y)$
can be represented as sum of the terms
\begin{equation*}
\mathds{1}_{\epsilon_1 x>0}\mathds{1}_{\epsilon_2 y>0}\mathcal{F}_{\varphi,M}(a)(\epsilon_3 x +\epsilon_4 y), \ \
\end{equation*}
with $\epsilon_j=\pm 1, j=1,\dots,4,$ $\epsilon_1 \epsilon_2 \epsilon_3 \epsilon_4 =1$ and $a \in \mathcal{A},$ defined in \eqref{eq.DO3m}.

For simplicity we consider the case $a=1,$ $\epsilon_j=1,$ $\forall j=1,\dots,4,$ and we shall estimate the term
\begin{equation*}
\int\mathds{1}_{x>0}\mathds{1}_{y>0} e^{i\tau(x+y)}\varphi\left(\frac{\tau}{M}\right)\,d\tau.
\end{equation*}
Then, we can proceed similarly for the other terms.

Integrating by parts and using Lemma \ref{lem:Jostk0}, we get
\begin{align*}
\left\|2^{ks}\int \int \mathds{1}_{x>0}\mathds{1}_{y>0} e^{i\tau(x+y)}\varphi\left(\frac{\tau}{2^k}\right)f^0_k(y)\,d\tau\,dy\right\|_{\ell^2_{k\leq 0}}\leq \\
\leq C\int  \left\|\frac{2^{k(s+1)}\mathds{1}_{x>0}\mathds{1}_{y>0}}{\langle 2^k (x+y)\rangle^{1+s} }f_k^0(y)\,dy\right\|_{\ell^2_{k\leq 0}}\,dy\\
\leq C \int \left\|\frac{2^{k(s+1)} \mathds{1}_{x>0}\mathds{1}_{y>0}}{\langle 2^k (x+y) \rangle ^{1+s} }\right\|_{\ell^\infty_{k\leq 0}} \left\|f_k^0\right\|_{\ell^2_{k\leq 0}}\,dy.
\end{align*}
From the trivial inequality
\begin{equation*}
\left\|\frac{2^{k(s+1)}}{\langle 2^k x \rangle^{1+s}}\right\|_{\ell^\infty_{k\leq 0}}\leq \frac{C}{|x|^{1+s}}
\end{equation*}
combined with the Young inequality in Lorentz spaces we have
\begin{equation*}
\left\|\left\|2^{ks}\int \int \mathds{1}_{x>0}\mathds{1}_{y>0} e^{i\tau(x+y)}\varphi\left(\frac{\tau}{2^k}\right)f^0_k(y)\,d\tau\,dy\right\|_{\ell^2_{k\leq 0}}\right\|_{L^p(\mathds{R})}\leq
C \left\| \left\| f_k^0(y)\right\|_{\ell^2_{k\leq 0}}\right\|_{L^q(\mathds{R})},
\end{equation*}
with $1/q=1/p-s$ and $0<s<1/p$.

The case $k\geq 0$ follows similarly using the estimate
\begin{equation*}
\left|(\pi_k-\pi_k^0)f(x)\right|\leq C \int \frac{f(y)}{\langle 2^k (x\pm y) \rangle^s}\left(\frac{1}{\langle x\rangle}+\frac{1}{\langle y \rangle}\right)\,dy.
\end{equation*}
This complete the proof.
\end{proof}

\section{Counterexample for equivalence of homogeneous Sobolev spaces}

In this section we consider the   case $p \in [n/2,\infty) \cap (1,\infty)$ and we shall prove Theorem \ref{l.co1}, therefore we shall show that the equivalence property
\begin{equation}\label{eq.c1}
\|(\mathcal{H}_0+V)^{n/(2p)} u \|_{L^p(\mathds{R}^n)} \sim  \|(\mathcal{H}_0)^{n/(2p)}  u \|_{L^p(\mathds{R}^n)}
\end{equation}
is not true for $n \in \mathbb{N}.$
\begin{proof}[Proof of Theorem \ref{l.co1}]
	Let us suppose that the relation \eqref{eq.c1} holds. Choosing positive potential
	$$ V(x) = \frac{1}{1+|x|^3},$$
	we can apply the heat kernel estimate obtained in \cite{Z00}, i.e.
	\begin{equation}\label{eq.CE1}
	\frac{C_1 e^{-c_1|x-y|^2/4t}}{t^{n/2}} \leq   e^{-t \mathcal{H}}(x,y) \leq \frac{C_2 e^{-c_2|x-y|^2/4t}}{t^{n/2}}.
	\end{equation}
	This estimate and the relation
	$$ \mathcal{H}^{-\alpha} = \frac{1}{\Gamma(\alpha)} \ \int_0^\infty t^{\alpha-1} e^{-t \mathcal{H}}   dt $$
	imply
	$$\left|(\mathcal{H}_0+V)^{-1} u (x) \right| \leq C  \left|(\mathcal{H}_0)^{-1} u (x) \right|$$
	so taking the $L^p$ norm and using a duality argument,  we can write
	\begin{equation}\label{eq.CE2}
	\|V(\mathcal{H}_0+V)^{-1} f \|_{L^p(\mathds{R}^n)} \leq C \|f\|_{L^p(\mathds{R}^n)},
	\end{equation}
	so we have
	\begin{equation}\label{eq.CE3}
	\|V g\|_{L^p(\mathds{R}^n)} \leq C \|(\mathcal{H}_0+V)g\|_{L^p(\mathds{R}^n)}.
	\end{equation}
	Interpolation argument and the assumption $p \geq n/2$ combined with  the equivalence property \eqref{eq.c1} lead to
	\begin{equation}\label{eq.co2}
	\int_{\mathds{R}^n} (V(x))^{n/2} |u(x)|^p dx \leq C \|\mathcal{H}_0^{n/(2p)}u\|_{L^p(\mathds{R}^n)}^{p}.
	\end{equation}
	Taking $u $ in the Schwartz class $ S(\mathds{R}^n) $
	of rapidly decreasing function, we can apply a rescaling argument. Indeed, considering the dilation
	$$ u_\lambda(x) =  u(x\lambda),$$
	we find
	$$   \|\mathcal{H}_0^{n/(2p)}u_\lambda\|_{L^p(\mathds{R}^n)}^p =  \underbrace{\|\mathcal{H}_0^{n/(2p)} u\|_{L^2(\mathds{R}^n)}^{p}}_{\mbox{constant in $\lambda$}} $$
	and
	$$ \lim_{\lambda \searrow 0}\int_{\mathds{R}^n} V^{n/2}(x) |u_\lambda(x)|^p dx = \left(\int_{\mathds{R}^n} V^{n/2}(x)  dx\right) |u(0)|^p. $$ In this way we deduce
	\begin{equation}\label{eq.co2a1}
	|u(0)|^p \left(\int_{\mathds{R}^n} V^{n/2}(x)  dx\right) \leq  C \|\mathcal{H}_0^{n/(2p)} u\|_{L^p(\mathds{R}^n)}^p.
	\end{equation}
	The homogeneous norm
	$$ \|\mathcal{H}_0^{n/(2p)} u\|_{L^p(\mathds{R}^n)}^p$$ is also invariant under translations, i.e. setting
	$$ u^{(\tau)}(x) = u(x+\tau), $$ we have
	$$ \widehat{u^{(\tau)}}(\xi) = e^{{\rm i} \tau \xi} \widehat{u}(\xi) $$ and
	$$  \|\mathcal{H}_0^{n/(2p)}u^{(\tau)}\|_{L^p(\mathds{R}^n)}^p =\|\mathcal{H}_0^{n/(2p)} u\|_{L^p(\mathds{R}^n)}^p,$$ so applying \eqref{eq.co2a1} with $u^{(\tau)}$ in the place of  $u$, we find
	$$
	|u(\tau)|^p \int_{\mathds{R}^n} V^{n/2}(x)  dx \leq  C \|\mathcal{H}_0^{n/(2p)}u\|_{L^p(\mathds{R}^n)}^p,
	$$
	or equivalently
	\begin{equation}\label{eq.co2a2}
	\|u\|^p_{L^\infty(\mathds{R}^n)} \leq C_1 \|\mathcal{H}_0^{n/(2p)}u\|_{L^p(\mathds{R}^n)}^p,
	\end{equation}
	where
	$$ C_1 = \frac{C}{\|V^{n/2}\|_{L^1(\mathds{R}^n)}}.$$
	
	The substitution  $ \phi = \mathcal{H}_0^{n/(2p)} u $ enables us to rewrite \eqref{eq.co2a2} as
	\begin{equation}\label{eq.co2a3}
	\|I_{n/p}(\phi)\|^p_{L^\infty(\mathds{R}^n)} \leq C_1 \|\phi\|_{L^p(\mathds{R}^n)}^p,
	\end{equation}
	where
	$$ I_\alpha(\phi)(x) = \mathcal{H}_0^{-\alpha/2}(\phi)(x) = c \int_{\mathds{R}^n} |x-y|^{-n+\alpha} \phi(y) dy, \  \alpha \in (0,n) $$
	are the Riesz operators.
	
	It is easy to show that \eqref{eq.co2a3} leads to a contradiction. Indeed, taking
	$$ \phi_N(x) = \sum_{j=0}^N \underbrace{|x|^{-n/p}\  \mathds{1}_{2^j \leq |x| \leq  2^{j+1}} (x)}_{\chi_j(x)}, $$
	with $N \geq 2$ sufficiently large and being $\mathds{1}_A(x)$ the characteristic function of the set $A$.
	Since the functions $\chi_j$ have almost disjoint supports and they are non-negative, for almost every $x \in \mathds{R}$ we have
	$$ \sum_{j=1}^N \chi_j^p(x) = \left(\sum_{j=1}^N \chi_j(x) \right)^p . $$
	so
	$$  \|\phi_N\|_{L^p(\mathds{R}^n)}^p = \sum_{j=0}^N \int_{2^j}^{2^{j+1}} \frac{r^{n-1}dr}{r^n}  \leq C' N.$$
	Further, we can use the estimates
	$$ I_{n/p}(\phi_N)(0) \geq  \left( \sum_{j=0}^N \int_{2^j}^{2^{j+1}} \frac{r^{n-1}dr}{r^n} \right) \geq C N.$$
	Hence, from \eqref{eq.co2a3} we deduce
	$$ C N^p \leq  \|I_{n/p}(\phi_N)\|^p_{L^\infty(\mathds{R}^n)} \leq C_1 \|\phi_N\|_{L^p(\mathds{R}^n)}^p \leq C_2 N ,$$
	for any $N$ sufficiently big and this is impossible.
	
	This completes the proof of the Theorem.
\end{proof}

\section{Proof of Lemma \ref{theo:1}}

\begin{proof}[Step I: Pseudo conformal two parameter group $U(T,S)$]
Set
  \begin{equation}\label{eq.In7}
    \psi(t) =  e^{-{\rm i} (t-1) \mathcal{H}} f,   t>1 ,
\end{equation}
where $\mathcal{H}_0=-\partial_x^2.$

Making the  transformation
$$ (t,\psi) \ \Longrightarrow (T,\Psi), $$such that
where
\begin{equation}\label{eq:In8}
  t=\frac{1}{T}, \ \ \Psi(T,x)= \overline{\psi\left(\frac{1}{T},x \right)}.
\end{equation}
We can rewrite \eqref{eq.In7} as follows
\begin{equation}\label{eq.In7a}
    \Psi(T) =  e^{{\rm i} ( \mathcal{H}/T - \mathcal{H} )} \overline{f} .
\end{equation}
Now we can use the isometry
$$ B(T) : L^2(\mathds{R}) \ \to \ L^2(\mathds{R}),$$
associated with the pseudo conformal transform for the free Schr\"odinger equation, i.e.
\begin{equation}\label{eq.In7b}
    B(T) = M(T) \sigma_{T},
\end{equation}
with
\begin{equation}\label{eq.In7c}
   M(T)g(x) = e^{{\rm i} x^2/(4T)} g(x) , \ \ \sigma_T(g)(x) = T^{-1/2} g(T^{-1}x).
\end{equation}
Making the substitution
$$\Phi(T) = B(T) \Psi(T),$$
 we find  the integral equation
\begin{equation}\label{eq.In11}
    \Phi(T) = U(T) U^*(1) \Phi_0, \ \ \Phi_0 = B(1) \left( \overline{f}\right),
\end{equation}
where
\begin{equation}\label{eq.In7a}
 U(T) = B(T) e^{{\rm i} \mathcal{H}/T }
\end{equation}

We shall need the following properties of the two parameter group
$$U(T,S) = U(T)U^*(S) = B(T) e^{{\rm i} ( \mathcal{H}/T-\mathcal{H}/S ) } B^*(S).$$
\begin{lem}
If
\begin{equation}\label{eq.co40}
  -\Delta(T) =   -\Delta + T^{-2} V \left(\frac{x}{T} \right),
\end{equation}
then for any $T \in (0,1]$ this operator is self - adjoint positive, we have the group property
\begin{equation}\label{eq.co41}
    U(T_1,T_2)U(T_2,T_3) = U(T_1,T_3), \ \ \forall T_1,T_2,T_3 \in (0,1]
\end{equation}
and for any couple $T,S \in (0,1]$ we have
\begin{equation}\label{eq.co45}
    U(T,S) : D((-\Delta(S))^{a/2}) \ \to \\ D((-\Delta(T))^{a/2}), \ \ \forall a \in [0,2].
\end{equation}
\end{lem}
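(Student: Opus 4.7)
The proof splits into three claims: (i) self-adjointness and positivity of $-\Delta(T)$, (ii) the group identity for $U(T,S)$, and (iii) the Sobolev-scale-preservation \eqref{eq.co45}. For (i), since $V \in L^\infty(\mathds{R})$ the potential $T^{-2}V(x/T)$ is a bounded multiplication operator on $L^2(\mathds{R})$ for every $T \in (0,1]$, so $-\Delta(T) = -\partial_x^2 + T^{-2}V(x/T)$ is a Kato-Rellich perturbation of $-\partial_x^2$ and hence self-adjoint on $H^2(\mathds{R})$. Positivity reduces to positivity of $\mathcal{H}$ via the unitary equivalence $-\Delta(T) = T^{-2}\sigma_T\,\mathcal{H}\,\sigma_T^*$, which follows from the scaling identities $\sigma_T(-\partial_x^2)\sigma_T^* = -T^2\partial_x^2$ and $\sigma_T V(x)\sigma_T^* = V(x/T)$.

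For (ii), the group property is immediate from the factorization $U(T,S) = U(T)U^*(S)$ and the unitarity of each $U(T) = B(T)e^{\mathrm{i}\mathcal{H}/T}$ on $L^2(\mathds{R})$:
\[
U(T_1,T_2)U(T_2,T_3) = U(T_1)U^*(T_2)U(T_2)U^*(T_3) = U(T_1)U^*(T_3) = U(T_1,T_3).
\]

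For (iii), the key tool is the evolution equation $\mathrm{i}\,\partial_T \Phi(T) = -\Delta(T)\,\Phi(T)$ satisfied by $\Phi(T) = U(T,S)\Phi_S$, obtained by pushing $\mathrm{i}\,\partial_t\overline\psi = -\mathcal{H}\overline\psi$ through the lens transform $B(T)$ after cancelling the commutator terms between the phase $M(T)$ and the dilation $\sigma_T$. The case $a = 0$ is the $L^2$-unitarity of $U(T,S)$. For $a = 2$, one differentiates $\|(-\Delta(T))\Phi\|_{L^2}^2 = \|\partial_T\Phi\|_{L^2}^2$ to obtain
\[
\frac{d}{dT}\bigl\|(-\Delta(T))\Phi(T)\bigr\|_{L^2}^2 \;=\; 2\,\mathrm{Im}\,\bigl\langle\,\partial_T\Phi,\ \bigl(\partial_T(-\Delta(T))\bigr)\Phi\,\bigr\rangle,
\]
the contribution $\mathrm{Re}\langle\partial_T\Phi,\,-\mathrm{i}(-\Delta(T))\partial_T\Phi\rangle$ vanishing by self-adjointness of $-\Delta(T)$. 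With $\partial_T(-\Delta(T)) = -2T^{-3}V(x/T) - T^{-4}xV'(x/T)$, the rescaling $y = x/T$ combined with $V \in L^1_\gamma$, $\gamma > 1$, and the Sobolev embedding $H^1 \hookrightarrow L^\infty$ bound the right-hand side by $C(T)\|(-\Delta(T))\Phi\|_{L^2}^2$; a Gronwall inequality on $[S,T] \subset (0,1]$ then yields $\|(-\Delta(T))\Phi(T)\|_{L^2} \le K(S,T)\|(-\Delta(S))\Phi_S\|_{L^2}$, which is the case $a = 2$. The case $a = 1$ follows analogously from $\frac{d}{dT}\langle\Phi,(-\Delta(T))\Phi\rangle = \langle\Phi,(\partial_T(-\Delta(T)))\Phi\rangle$, and the remaining $a \in (0,2)\setminus\{1\}$ follow by complex interpolation between the endpoints, using that $V \in L^\infty$ makes $-\Delta(T)$ a bounded perturbation of $-\partial_x^2$ and hence $D((-\Delta(T))^{a/2}) = H^a(\mathds{R})$ as sets (with equivalence constants depending on $T$ but finite on any compact subinterval of $(0,1]$).

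The principal obstacle is the blow-up of the Gronwall constant $K(S,T)$ as $T$ or $S$ approaches $0$, caused by the $T^{-3}$ and $T^{-4}$ factors in $\partial_T(-\Delta(T))$; since the claim \eqref{eq.co45} is stated pointwise for $T,S \in (0,1]$ with both strictly positive, this is acceptable, but one should not expect any bound uniform in the limit $T \to 0^+$ -- and indeed this is exactly why the blow-up in the preceding lemma is possible. A secondary technical point is the rigorous justification of the identity $\partial_T\Phi = -\mathrm{i}(-\Delta(T))\Phi$ on the relevant Sobolev scale; this is handled by the strong $L^2$-continuity of $B(T)$ in $T$, immediate from \eqref{eq.In7b}--\eqref{eq.In7c}, together with Stone's theorem applied to the self-adjoint generator $\mathcal{H}$ inside $e^{\mathrm{i}\mathcal{H}/T}$.
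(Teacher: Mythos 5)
Your parts (i) and (ii) are sound and are essentially the only available arguments: Kato--Rellich self-adjointness on $H^2(\mathds{R})$ since $T^{-2}V(\cdot/T)$ is a bounded multiplication operator, positivity via the unitary conjugation $-\Delta(T)=T^{-2}\sigma_T\mathcal{H}\sigma_T^*$ (with the caveat that non-negativity of $\mathcal{H}$ is a standing assumption of the paper rather than a consequence of $V\in L^\infty\cap L^1_\gamma$ and $\int V>0$), and unitarity of each $U(T)$ for the group law. For comparison: the paper states this lemma without any proof, and only identifies the generator $H(T)=-\Delta(T)$ later via the modified Lax-pair relation \eqref{eq.re5as} and \eqref{eq.CR9}, so your use of the evolution equation ${\rm i}\,\partial_T\Phi=-\Delta(T)\Phi$ is consistent with the paper's (asserted but undetailed) machinery.

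The genuine gap is in your proof of \eqref{eq.co45} for $a=1,2$. Your Gronwall argument differentiates the potential term in $T$ and uses $\partial_T\bigl(T^{-2}V(x/T)\bigr)=-2T^{-3}V(x/T)-T^{-4}xV'(x/T)$. Under the hypotheses of Lemma \ref{theo:1} the potential is only in $L^\infty\cap L^1_\gamma$: the derivative $V'$ need not exist, and even when it does there is no hypothesis controlling $xV'(x)$ in any norm, so the bound $\|(\partial_T(-\Delta(T)))\Phi\|_{L^2}\le C(T)\bigl(\|(-\Delta(T))\Phi\|_{L^2}+\|\Phi\|_{L^2}\bigr)$ that your differential inequality requires is simply not available. (A minor additional repair: the terms involving $V$ are controlled by $\|\Phi\|_{L^2}$, not by $\|(-\Delta(T))\Phi\|_{L^2}$, so Gronwall must be run on the full graph norm $\|\Phi\|_{L^2}^2+\|(-\Delta(T))\Phi\|_{L^2}^2$.) Closing this under the stated hypotheses requires either strengthening the assumptions on $V$ (e.g. $V\in C^1$ with $xV'\in L^\infty$), or an argument that never differentiates the potential in $T$ --- for instance rewriting \eqref{eq.co45} via $D((-\Delta(T))^{a/2})=\sigma_T D(\mathcal{H}^{a/2})$ as the statement that $e^{{\rm i}Tx^2/4}\,e^{{\rm i}\mathcal{H}/T}e^{-{\rm i}\mathcal{H}/S}\,e^{-{\rm i}Sx^2/4}$ preserves $D(\mathcal{H}^{a/2})$; note that the middle factor commutes with $\mathcal{H}$, but the outer Gaussian phases do not individually preserve $H^1$ (their derivatives produce factors of $x$), so one must exploit the pseudo-conformal commutation relations rather than separate boundedness. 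Your interpolation step for $a\in(0,2)$ and the identification $D((-\Delta(T))^{a/2})=H^a$ are fine once the endpoints are secured.
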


Note that we have the relation
\begin{equation}\label{eq.co46}
    \|(t\partial_x + {\rm i} x) \psi(t) \|_{L^2} \sim \|(-\Delta)^{1/2} \Phi(T) \|_{L^2}, \ \ T=1/t.
\end{equation}

 Hence the proof of Lemma \ref{theo:1} is reduced to the proof of  the following estimate.
 \begin{lem} \label{t.mes} For any $f\in S(\mathds{R})$ with  $f(0) \neq 0$ we have
 $$ \limsup_{T \searrow 0}  \| \Phi(T) \|_{H^1(\mathds{R})} = \infty  .
$$
\end{lem}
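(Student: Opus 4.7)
The plan is to argue by contradiction, exploiting the concentration of the rescaled potential $W_T(x)=T^{-2}V(x/T)$ about the origin as $T\searrow 0$ together with a careful identification of the pointwise limit $\Phi_*(0)=\lim_{T\searrow 0}\Phi(T,0)$. I would first record, from the unitary equivalence $-\Delta(T)=T^{-2}U_T\mathcal{H}U_T^{*}$ supplied by the preceding lemma (with $U_Tg(x)=T^{-1/2}g(x/T)$), together with $\Phi=U_Th_T$ where $h_T(y)=e^{iTy^{2}/4}g_T(y)$ and $g_T=e^{i\mathcal{H}(1/T-1)}\bar f=\Psi(T)$, the identities
\begin{equation*}
\|\partial_x\Phi(T)\|_{L^{2}}^{2}=T^{-2}\|\partial_y h_T\|_{L^{2}}^{2},\qquad \int T^{-2}V(x/T)|\Phi(T,x)|^{2}\,dx=T^{-2}\int V(y)|g_T(y)|^{2}\,dy.
\end{equation*}
The equation $i\partial_T\Psi=T^{-2}\mathcal{H}\Psi$ then yields the conservation $\langle\mathcal{H}g_T,g_T\rangle=E_0:=\|f'\|_{L^{2}}^{2}+\int V|f|^{2}\,dx$, independent of $T$.

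Next, suppose for contradiction that $\|\Phi(T_n)\|_{H^{1}}\le K$ along a sequence $T_n\searrow 0$. The one-dimensional Sobolev embedding $H^{1}\hookrightarrow C_b$ delivers $\|\Phi(T_n)\|_{L^{\infty}}\le CK$ and equi-H\"older continuity on compacta, so by Rellich extraction $\Phi(T_n)\to\Phi_{*}\in H^{1}$ locally uniformly. The change of variable $y=x/T$ then converts
\begin{equation*}
\int T_n^{-2}V(x/T_n)|\Phi(T_n,x)|^{2}\,dx=T_n^{-1}\int V(y)|\Phi(T_n,T_ny)|^{2}\,dy,
\end{equation*}
and since $T_ny\to 0$ for each fixed $y$, dominated convergence (using $V\in L^{1}$ and the uniform $L^{\infty}$ bound) gives $\int V|\Phi(T_n,T_ny)|^{2}\,dy\to|\Phi_{*}(0)|^{2}\int V$, so the potential integral grows like $T_n^{-1}|\Phi_{*}(0)|^{2}\int V$ to leading order.

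The crux of the proof is to establish $\Phi_{*}(0)\ne 0$, and this is where both hypotheses $f(0)\ne 0$ and $\int V>0$ enter essentially. If the initial datum $\bar f$ carries a non-trivial bound-state component $\bar f_{pp}$ of $\mathcal{H}$ with $\bar f_{pp}(0)\ne 0$, then $g_T(0)=\bar f_{pp}(0)e^{i\lambda(1/T-1)}+O(T^{1/2})$, hence $|\Phi(T,0)|=T^{-1/2}|g_T(0)|\to\infty$, which directly violates the uniform $L^{\infty}$ bound. In the remaining purely-scattering case, one uses the distorted-Fourier representation of $e^{-it\mathcal{H}}$ built on the Jost functions $m_\pm$ and the coefficients $T(\tau),R_\pm(\tau)$ from Section \ref{sec:spectral}, and applies stationary-phase asymptotics at the bottom of the continuous spectrum to identify $\Phi_{*}(0)$ as an explicit nonzero multiple of a scattering quantity whose non-triviality follows from $\int V>0$ (this hypothesis controls the low-energy behavior of $T(\tau)$ and prevents the zero-energy cancellations that render $\Phi_{*}(0)$ zero in the unperturbed case). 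This is the main obstacle of the argument, as it requires delicate spectral-theoretic input.

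Granted $\Phi_{*}(0)\ne 0$, the concentration limit gives $\int T_n^{-2}V|\Phi(T_n)|^{2}\,dx\sim T_n^{-1}|\Phi_{*}(0)|^{2}\int V\to\infty$. On the other hand, the Gagliardo--Nirenberg inequality $\|h_{T_n}\|_{L^{\infty}}^{2}\le 2\|h_{T_n}\|_{L^{2}}\|\partial_y h_{T_n}\|_{L^{2}}$ combined with $\|h_{T_n}\|_{L^{2}}=\|f\|_{L^{2}}$, the identity $\|\partial_y h_T\|_{L^{2}}=T\|\partial_x\Phi(T)\|_{L^{2}}$ and the contradiction hypothesis give $\|g_{T_n}\|_{L^{\infty}}^{2}=\|h_{T_n}\|_{L^{\infty}}^{2}\le 2K\|f\|_{L^{2}}T_n$, hence $\int V|g_{T_n}|^{2}\,dy\le 2K\|f\|_{L^{2}}\|V\|_{L^{1}}T_n$ and the potential integral is at most $2K\|f\|_{L^{2}}\|V\|_{L^{1}}T_n^{-1}$. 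Matching the two asymptotics of $\int T_n^{-2}V|\Phi(T_n)|^{2}\,dx$ at order $T_n^{-1}$, and using the conservation $\|g_{T_n}'\|^{2}=E_0-\int V|g_{T_n}|^{2}\to E_0$ together with the expansion $\|h_{T_n}'\|^{2}=\|g_{T_n}'\|^{2}+\tfrac{T_n^{2}}{4}\|yg_{T_n}\|^{2}+T_n\int yJ(g_{T_n})\,dy$ forces a quantitative inequality which, by Step 3's explicit formula for $|\Phi_*(0)|^2\int V$, cannot hold for the fixed data; this produces the desired contradiction and therefore $\limsup_{T\searrow 0}\|\Phi(T)\|_{H^{1}}=\infty$.
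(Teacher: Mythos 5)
Your proposal assembles the right objects (the concentration of $T^{-2}V(x/T)$ at the origin, the splitting of $\|(-\Delta(T))^{1/2}\Phi(T)\|_{L^2}^2$ into free kinetic energy plus the potential term, and the role of the value at $x=0$ of the limiting profile), but it misses the one structural input that makes the paper's contradiction close. The paper uses the mapping property \eqref{eq.co45} of the two-parameter group $U(T,S)$ between the form domains $D((-\Delta(T))^{1/2})$, which (since $V\in L^\infty$, so $D((-\Delta(1))^{1/2})=H^1$) yields the uniform-in-$T$ bound $\|(-\Delta(T))^{1/2}\Phi(T)\|_{L^2}\le C\|\Phi_0\|_{H^1}$. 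Combined with the contradiction hypothesis \eqref{eq.co8} through the identity \eqref{eq.co9}, this bounds the potential term $T^{-2}\int V(x/T)|\Phi(T,x)|^2\,dx$ by an absolute constant, i.e.\ by $O(1)$. Your only upper bound for this same quantity comes from Gagliardo--Nirenberg applied to $g_T$ and is $O(T^{-1})$ --- exactly the same order as the concentration lower bound $T^{-1}|\Phi_*(0)|^2\int V$. With two bounds of identical order, no contradiction follows from asymptotics alone, which is why your last paragraph has to retreat to ``matching constants'' against an unspecified ``explicit formula'' for $|\Phi_*(0)|^2\int V$; that step is not carried out and cannot be, as stated.

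The second gap is that you explicitly defer the crux $\Phi_*(0)\ne0$ to unspecified ``delicate spectral-theoretic input'' (distorted Fourier transforms, stationary phase at zero energy). In the paper's route, once the potential term is known to be $O(1)$, the change of variables gives $\int V(y)|\Phi(T,yT)|^2\,dy\le CT\to0$, and the only remaining task is to identify the limit of $\Phi(T,yT)$ with $\Phi_0(0)=\overline{f(0)}$ (recall $\Phi_0=B(1)\bar f=e^{ix^2/4}\bar f$), so that $\int V>0$ forces $f(0)=0$, contradicting the hypothesis. Your proposal therefore leaves both the quantitative clash and the non-vanishing of $\Phi_*(0)$ open; neither the conservation of $\langle\mathcal{H}g_T,g_T\rangle$ nor the bound-state digression substitutes for the form-domain propagation bound \eqref{eq.co45}, which is the engine of the argument.
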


\end{proof}

\begin{proof}[Step II: Proof of Lemma \ref{t.mes}]

We shall argue by contradiction. If the assertion of the Theorem is not true then we can find $C>0$ so that
\begin{equation}\label{eq.co8}
    \| \Phi(T) \|_{H^1(\mathds{R})} \leq C \|\Phi_0\|_{H^1(\mathds{R})} , \ \forall T \in (0,1]  .
\end{equation}

The two parameter group $U(T,S)$ has the property
\begin{equation}\label{eq.co5}
    U(T,S) : D((-\Delta(S))^{a/2}) \ \to \\ D((-\Delta(T))^{a/2}), \ \ \forall a \in [0,2].
\end{equation}
and this means that we have in particular the inequality
\begin{equation}\label{eq.co6}
    \| (-\Delta(T))^{1/2}\underbrace{U(T,1) \Phi_0}_{\Phi(T)}\|_{L^2(\mathds{R})} \leq C \| (1-\Delta(1))^{1/2}\Phi_0\|_{L^2(\mathds{R})} \leq C \|\Phi_0\|_{H^1(\mathds{R})},
\end{equation}
since we assume $V \in L^\infty(\mathds{R}).$
The property \eqref{eq.co8} implies now
\begin{equation}\label{eq.co6}
    \| (-\Delta)^{1/2}\underbrace{U(T,1) \Phi_0}_{\Phi(T)}\|_{L^2(\mathds{R})} \leq C \|\Phi_0\|_{H^1(\mathds{R})},
\end{equation}
so using the relation
\begin{equation}\label{eq.co9}
  \| (-\Delta(T))^{1/2} \Phi(T)\|^2_{L^2(\mathds{R})} = \| (-\Delta)^{1/2} \Phi(T)\|^2_{L^2(\mathds{R})} + T^{-2} \int V(x/T) |\Phi(T,x)|^2 dx ,
\end{equation}
we get
\begin{equation}\label{eq.co6}
    T^{-2} \int V(x/T) |\Phi(T,x)|^2 dx \leq C \|\Phi_0\|^2_{H^1(\mathds{R})},
\end{equation}

This is equivalent to the relation
\begin{equation}\label{eq.co6}
    \int V(y) |\phi(T,yT)|^2 dy \leq C  T \|\Phi_0\|^2_{H^1(\mathds{R})},
\end{equation}
so using the assumption
$\int V(y) dy \neq 0$ 
and taking the limit $T \to 0,$ we get
$$ \Phi_0(0)=f(0)=0.$$
This is a contradiction and the proof of the Lemma is complete.

\end{proof}

\section{Modified Lax pairs relations}

Given any   two different perturbed  groups $ U(T,S),$ $ \widetilde{U}(T,S)$ connected via the splitting relation
\begin{equation}\label{eq.hi1a}
    U(T,S) = B(T) \widetilde{U}(T,S) B^*(S), \ 0 < T,S \leq 1
\end{equation}
 the corresponding time dependent generators $-{\rm i} H(T)$ and $-{\rm i} \widetilde{H}(T)$ are determined by the Cauchy problems
\begin{equation}\label{eq.hi1e}
   \frac{d}{dT} U(T,S) = - {\rm i} H(T) U(T,S), \ \ U(S,S) = I.
\end{equation}
\begin{equation}\label{eq.hi1f}
   \frac{d}{dT} \widetilde{U}(T,S) = - {\rm i} \widetilde{H}(T) \widetilde{U}(T,S), \ \ \widetilde{U}(S,S) = I.
\end{equation}
Now \eqref{eq.hi1a} can be associated with the following Lax pairs relation
\begin{equation}\label{eq.hi1g}
   B^\prime(T) = {\rm i} \left[ B(T) \widetilde{H}(T) - H(T) B(T) \right]
\end{equation}
and we can easily see that \eqref{eq.hi1g} implies that $-{\rm i} H(T)$ is the generator of the perturbed group $U(T,S).$

Now we apply this argument for
\begin{equation}\label{eq.hi1h}
    U_0(T,S) = B(T) \widetilde{U}_0(T,S) B^*(S), \ 0 < T,S \leq 1
\end{equation}
with
$$ U_0(T,S) =  e^{-{\rm i} \mathcal{H}_0(T -S) } =U_0(T) U_0^*(S), \ \ \widetilde{U}_0(T,S) = e^{{\rm i} \mathcal{H}_0/T} e^{-{\rm i} \mathcal{H}_0/S} . $$
Obviously, the generator of $U_0(T,S)$ is $-{\rm i} \mathcal{H}_0 = {\rm i} \partial_x^2$ and the Lax pairs relation
becomes now
\begin{equation}\label{eq.re5}
    B^\prime(T) = {\rm i} \left[ B(T)\frac{\mathcal{H}_0}{ T^2} - \mathcal{H}_0 B(T) \right]
\end{equation}
The check of this relation is straightforward and we omit it.

Now we can define the family of operators
\begin{equation}\label{eq.In7ab}
 U_0(T) = B(T) e^{{\rm i} \mathcal{H}_0/T }
\end{equation}

This relation and the definition of $B(T)$ imply
\begin{equation}\label{eq.In7a1}
 U_0(T) = e^{-{\rm i} \mathcal{H}_0 T}.
\end{equation}

Further  the perturbed group $U(T,S)$ defined by \eqref{eq.In7a} is of the form introduced in \eqref{eq.hi1a} with
$$ \widetilde{U}(T,S)= e^{{\rm i} \mathcal{H}/T } e^{-{\rm i} \mathcal{H}/S }  $$
and obviously the  generator of $\widetilde{U}(T,S)$  is $-{\rm i} \widetilde{H}(T) = - {\rm i} \mathcal{H}/T^2.$
The modified Lax pairs relation has the form
\begin{equation}\label{eq.re5as}
    B^\prime(T) = {\rm i} \left[ B(T)\frac{\mathcal{H}}{ T^2} - H(T) B(T) \right]
\end{equation}
and this relation is true with
$$ H(T) = -\Delta(T),$$
where
\begin{equation}\label{eq.CR9}
  -\Delta(T) =  T^{-2} \sigma_T  \mathcal{H }\sigma_T^* = -\Delta + T^{-2} V \left(\frac{x}{T} \right).
\end{equation}
Again the check of the relation is trivial consequence of \eqref{eq.re5} and we omit the details.

\section*{\normalsize{ Acknowledgements}}
The authors are grateful to Atanas Stefanov for the critical remarks and discussions during the preparation of the work.

Funding: This work was supported by by University of Pisa, project no. PRA-2016-41 "Fenomeni singolari in problemi deterministici
e stocastici ed applicazioni"; by INDAM, GNAMPA - Gruppo Nazionale per l'Analisi Matematica, la
Probabilit\`{a} e le loro Applicazioni and by Institute of Mathematics
and Informatics, Bulgarian Academy of Sciences.

\bibliographystyle{amsplain}

\end{document}